\documentclass[12pt,reqno]{amsart}
\textwidth       16.0cm
 \evensidemargin   0.5cm         
 \oddsidemargin    0.5cm

 \usepackage{ifthen}
 \usepackage{amsmath}
 \usepackage{amssymb}
 \usepackage{amsthm}
 \usepackage[latin1]{inputenc}
 \usepackage{eurosym}
 \usepackage[dvips]{graphics}
 \usepackage{graphicx}
 \usepackage{epsfig}
\usepackage[colorlinks]{hyperref}
\usepackage{enumerate}
\theoremstyle{plain}
\newtheorem{theorem}{Theorem}
\newtheorem{proposition}{Proposition}
\usepackage{hyperref}

\newtheorem{corollary}{Corollary}
\newtheorem{lemma}{Lemma}

\theoremstyle{definition}
\newtheorem{definition}{Definition}

\newtheorem{remark}{Remark}


\newcommand{\C}{\mathbb C}
\newcommand{\E}{\mathbb E}

\newcommand{\N}{\mathbb N}

\newcommand{\R}{\mathbb R}
\newcommand{\T}{\mathbb T}
\newcommand{\V}{\mathbb V}
\newcommand{\Z}{\mathbb Z}

\newcommand{\cC}{\mathcal C}
\newcommand{\cI}{\mathcal I}

\newcommand{\cH}{\mathcal H}

\newcommand{\cL}{\mathcal L}

\newcommand{\cN}{\mathcal N}
\newcommand{\cP}{\mathcal P}

\newcommand{\bF}{\mathbf F}
\newcommand{\fui}{\varphi}
\newcommand{\af}{\alpha}

\newcommand{\De}{\Delta}
\newcommand{\Ga}{\Gamma}
\newcommand{\lam}{\lambda}
\newcommand{\Lam}{\Lambda}
\newcommand{\te}{\theta}
\newcommand{\om}{\omega}
\newcommand{\ep}{\varepsilon}

\newcommand{\diver}{\operatorname{div}}

\newcommand{\lV}{\left\Vert}
\newcommand{\rV}{\right\Vert}

\newcommand{\rip}{\rangle}
\newcommand{\lip}{\langle}

\newtheorem{problem}{Problem}

\begin{document}
\title[Discrete approximation of MFG]{Discrete approximation of
  stationary Mean Field Games}
\author{Tigran Bakaryan}
\address[T. Bakaryan]{
	King Abdullah University of Science and Technology (KAUST),
	CEMSE Division, Thuwal 23955-6900, Saudi Arabia.}
\email{tigran.bakaryan@kaust.edu.sa}

\author{Diogo Gomes}
\address[D. Gomes]{
	King Abdullah University of Science and Technology (KAUST),
	CEMSE Division, Thuwal 23955-6900, Saudi Arabia.}
\email{diogo.gomes@kaust.edu.sa}

\author{H\'ector S\'anchez Morgado}
\address[H. S\'anchez]{Instituto de Matem\'aticas. Universidad Nacional Aut\'onoma de M\'exico}
\email{hector@matem.unam.mx}
\keywords{Stationary Mean Field Game; Discrete Approach, Weak Solutions, Convergence}
\subjclass[2010]{
	35J47, 
	35A01} 

\date{\today}
   \begin{abstract}
In this paper, we focus on stationary (ergodic) mean-field games (MFGs). These games arise in the study of the long-time behavior of finite-horizon MFGs. 
Motivated by a prior scheme for Hamilton--Jacobi equations introduced in Aubry--Mather's theory, we introduce a discrete approximation to stationary MFGs. Relying on Kakutani's fixed-point theorem, we prove the existence and uniqueness (up to additive constant) of solutions to the discrete problem. Moreover, we show that the solutions to the discrete problem converge, uniformly in the nonlocal case and weakly in the local case, to the classical solutions of the stationary problem.
\end{abstract}

\maketitle

\section{Introduction}
\label{sec:dMFG}

The mean-field games (MFG) theory aims to model and analyze systems with many competing rational agents. 
These agents have preferences encoded in a cost functional, which they
seek to optimize. This functional depends both on the agent's and on the other agent's states. Individually, agents have
little effect on the entire system. Accordingly, their cost depends only
on their state and on aggregate or statistical quantities.   MFGs theory was introduced
in the mathematics community by J-M. Lasry
and P-L. Lions in \cite{ll1,ll2} and independently in the engineering
community by M. Huang, P. Caines, and R. Malham{\'e} in
\cite{Caines1,Caines2}. This theory has expanded tremendously and has
found applications in population dynamics  \cite{Lachapelle10,
  Achdou2019,GomesMS19},  economics \cite{NBERw23732,GNP,GoGuRi2021},
finance \cite{cardaliaguet2017mean,cartea2015algorithmic}, engineering
\cite{DePaola2019,KizilkaleSM19}, to name just a few. 


An important  model in the theory of MFG is the stationary (ergodic) MFG. This problem arises
in the study of the long-time behavior of
finite-horizon MFGs. The stationary
(ergodic) MFG is determined by the following system of PDEs
\begin{equation}
	\label{eq:estacionario}
	\begin{cases}
		\De v+H(x, Dv)=\rho+\bF(x,m), & \\
		\De m-\diver(m\ D_pH(x, Dv))=0, & m>0, \,\,\int_{\T^d} m ~dx=1.
	\end{cases}
      \end{equation}
 It is usual to study the preceding system under periodic boundary conditions. Hence,
 the spatial variable $x$ takes values on the $d$-dimensional torus $\T^d$.
The data of this problem are a smooth Tonelli Hamiltonian, $H:\T^d\times\R^d\to\R$, 
   and a continuous coupling, $\bF: \T^d\times\cP(\T^d)\to\R$;
 the unknowns are  the distribution of players, $m\in \cP(\T^d)$, the value function, $v:\T^d\to\R$, and the ergodic constant, $\rho\in \R$.
Because of the periodicity, 
$v$ can only be determined up to additive constants. 

Much progress has been achieved in understanding stationary MFGs since the first results
in  \cite{ll1,ll3}
 on the 
 existence and uniqueness of weak solutions to
\eqref{eq:estacionario}.
For example, 
 in \cite{GM},
the existence of classical solutions for
\eqref{eq:estacionario} in the local coupling case
($\bF(\cdot,m)=F(m(\cdot))$)
was proven; these results were later improved in 
\cite{GPatVrt, PV15}.
The nonlocal case  with quadratic and sub-quadratic
Hamiltonian was addressed in \cite{BFl}. In parallel, the theory of Sobolev solutions
was developed in \cite{bocorsporr}. In 
\cite{FrS}, stationary MFGs with density constraints were addressed. Finally, 
general existence results using
monotone operator methods were developed in \cite{FG2}.

In contrast, the theory for the convergence of discrete schemes to \eqref{eq:estacionario} is less
well developed. The first systematic approaches to numerical methods for MFGs were discussed
in \cite{CDY} and \cite{AchdouCapuzzo10}.  See also the survey \cite{achdou2013finite}
and the subsequent paper \cite{AL16II}.
A new class of estimates for MFGs with congestion and the rigorous convergence of corresponding numerical schemes was discussed in 
\cite{Achdou2015, Achdou2016}. The ergodic problem was examined from a numerical perspective
in several works; for example,
\cite{cacace2018ergodic} studied the ergodic problem both analytically and numerically,  
 semi-discrete approximations were considered in \cite{MR2928379},
and semi-Lagrangian methods were investigated in 
\cite{MR3392626}. 
An alternative approach using proximal methods was developed in \cite{Briceno-Arias}. 
In the last section of this paper, we will use the ideas first developed in 
 \cite{almulla2017two} for justifying convergence of numerical schemes for monotone MFGs.


Let $L:\R^d\times\R^d\to\R$ represent the Legendre transform of the
Hamiltonian $H$,
 \[	L(x,v)=\sup_{p\in\R^d} pv-H(x,p).
\]

Here,
we introduce a discrete approximation \eqref{eq:disc-MFG} of the
system \eqref{eq:estacionario}.
This scheme is the MFG analog of the approximation 
scheme for Hamilton-Jacobi equations and Mather measures considered
in \cite{DIIS} and \cite{DIPPS}.

\begin{problem}\label{problem} Let $\tau>0$, $L:\T^d\times\R^d\to\R$
  be a smooth Tonelli Lagrangian, and
  $\bF:\T^d\times\cP(\T^d\times\R^d)\to\R$ be continuous. Find
  $(\rho,u,\tilde\mu)\in\R\times C(\T^d) \times\cC_\tau$ satisfying 
	\begin{equation}
		\label{eq:disc-MFG}
		\begin{cases}
			\tau\rho&=\cL_\tau u(x) -u(x)-\tau\bF(x,\mu), \\
			-\rho&=\displaystyle{\int_{\T^d\times\R^d} L(x,q)+\bF(x,\mu)\ d\tilde\mu(x,q)},
			
			\quad \mu={\Pr}_{1\#}\tilde\mu,
		\end{cases}
	\end{equation}
where the set of holonomic measures $\cC_\tau$ is introduced in
Definition \ref{def-C_tau}, and  the Lax operator
$\cL_\tau$ is defined by \eqref{def Lax}. 
\end{problem} 
Note that our discrete system depends on the parameter $\tau$. 
To stress that, we call it $\tau$-discrete MFG system.

Our primary goal is to establish the existence of solutions to
 the preceding problem  and, then, prove the
convergence, uniform in the nonlocal case and weak
in the local case, to classical solutions of
\eqref{eq:estacionario}. For that,  in 
Section \ref{backg}, 
 we review prior results on the approximation of stationary Hamilton--Jacobi equations and the  connection with Aubry--Mather theory. 
In Section \ref{sec:statement-problem}, we detail our discrete MFG problem and present its relation to other discrete models. 
Then, in Section \ref{sec:assumptions}, we present our main assumptions.

The existence of solutions is addressed in 
Section \ref{sec:solv-discr-mfg}. There, relying on Kakutani's fixed-point Theorem, we prove the following theorem. 
\begin{theorem}\label{goal1} Suppose that $L$ satisfies Assumption
  $(A1)$\footnote{see Section \ref{sec:assumptions} for details on the assumptions.}. 
  In the nonlocal case, further assume that  $\bF$ satisfies
  $(B1)$ and $(B2)$; in the local case,  
  suppose that  $\bF(\cdot,m)=F(m)$ satisfies
  $(B4)$ and $(B5)$. Then, there exists a solution $(\rho_\tau, u_\tau, \tilde\mu_\tau)$
	of the $\tau$--discrete MFG, \eqref{eq:disc-MFG}. In addition, $\rho_\tau$ and $\mu_\tau={\Pr}_{1\#}\tilde\mu_\tau$,  
	are unique and $u_\tau$ is unique up to additive constants.
	
	Moreover, in the local case when $F(m)=\log(m)$, we have
	\begin{align}\notag
	\rho_\tau&= \min_{\mu\in\cC_\tau}\int_{\T^d\times\R^d} L+\log\circ\mu\ d\tilde\mu\\
	\label{eq:nogao}
	&=
-\log\min_{\phi\in C(\T^d)} \int_{\T^d} e^{(\cL_\tau\phi(x)-\phi(x))/\tau} ~dx. 
	\end{align}
\end{theorem}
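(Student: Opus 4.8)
The plan is to decouple the two lines of \eqref{eq:disc-MFG} by a fixed-point argument over the projected measure. Fix $\mu\in\cP(\T^d)$; then the first line of \eqref{eq:disc-MFG} is the discrete additive-eigenvalue problem for $\cL_\tau$ with potential $\tau\bF(\cdot,\mu)$, and the results recalled in Section~\ref{backg} furnish a unique ergodic constant $\rho[\mu]$ together with a corrector $u[\mu]$ that is unique up to additive constants. To this datum I associate the set $\cS(\mu)\subset\cP(\T^d)$ of projections ${\Pr}_{1\#}\tilde\mu$ of those holonomic measures $\tilde\mu\in\cC_\tau$ that attain the second line, that is, that minimize $\tilde\mu\mapsto\int_{\T^d\times\R^d} L+\bF(\cdot,\mu)\,d\tilde\mu$. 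Superlinearity of $L$ from $(A1)$ makes this action coercive in $q$, so by Prokhorov the competitors form a tight, hence relatively compact, family; this secures existence of minimizers and lets me restrict $\cS$ to a convex compact subset of $\cP(\T^d)$. I would then check that $\cS$ is convex-valued (the action is linear in $\tilde\mu$) and has closed graph (continuity of $\bF$ from $(B1)$, plus stability of minimizers under the uniform coercivity bounds), and invoke Kakutani's theorem to produce a fixed point $\mu_\tau\in\cS(\mu_\tau)$; the associated triple solves \eqref{eq:disc-MFG}.

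Uniqueness is the discrete counterpart of the Lasry--Lions monotonicity argument. Given two solutions $(\rho_i,u_i,\tilde\mu_i)$, I would integrate the corrector equation for $u_1$ against $\mu_2$ and that for $u_2$ against $\mu_1$; using the holonomy constraint defining $\cC_\tau$ together with the optimality of each $\tilde\mu_i$ for its own corrector, the Lagrangian and $\cL_\tau$ contributions cancel against the second equations, and one is left with an inequality whose two sides are $\pm\int_{\T^d}\big(\bF(\cdot,\mu_1)-\bF(\cdot,\mu_2)\big)\,d(\mu_1-\mu_2)$. Monotonicity of $\bF$ from $(B2)$ (resp. $(B5)$) then forces this pairing to vanish, giving $\mu_1=\mu_2$ and $\rho_1=\rho_2$; uniqueness of $u_\tau$ up to constants follows from the eigenvalue problem for the common potential.

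The delicate part is the closed form when $F(m)=\log m$. Here the action gains the relative-entropy term $\int_{\T^d\times\R^d}\log\circ\mu\,d\tilde\mu=\int_{\T^d}\mu\log\mu\,dx$, which is strictly convex, lower semicontinuous, and finite only on absolutely continuous $\mu$, so the minimization automatically selects a density and $\log\mu$ is well defined at the optimum. For the first equality in \eqref{eq:nogao} I would read $\min_{\tilde\mu\in\cC_\tau}\int L+\log\circ\mu\,d\tilde\mu$ as a convex program, introduce the corrector $u$ as Lagrange multiplier for the holonomy constraint and $\rho$ for the normalization $\int d\tilde\mu=1$, and observe that its first-order optimality conditions reproduce exactly the first line of \eqref{eq:disc-MFG}; evaluating the Lagrangian at the optimizer then identifies the minimal value with $\rho_\tau$. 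For the second equality I would pass to the dual via a Hopf--Cole substitution: the first equation gives $\mu(x)=e^{c}\,e^{(\cL_\tau u(x)-u(x))/\tau}$, and $\int_{\T^d}\mu\,dx=1$ fixes the normalizer $e^{-c}=\int_{\T^d}e^{(\cL_\tau u-u)/\tau}\,dx$, so that $\mu$ is a Gibbs density and $c$ equals $\rho_\tau$ up to the sign dictated by the convention of \eqref{def Lax}. Since $\cL_\tau(\phi+\text{const})=\cL_\tau\phi+\text{const}$, the functional $G(\phi)=\int_{\T^d}e^{(\cL_\tau\phi-\phi)/\tau}\,dx$ is invariant under additive constants and convex under that same convention, so its minimization over $C(\T^d)$ is well posed. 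Computing the first variation of $G$ by the envelope theorem for $\cL_\tau$, the stationarity condition for $\phi$ is precisely the holonomy constraint for the induced Gibbs measure; hence the corrector $u$ is the minimizer of $G$, its minimal value is $e^{-\rho_\tau}$, and \eqref{eq:nogao} follows.

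The step I expect to be the main obstacle is the rigorous justification of this duality: establishing that there is no gap between the entropy-penalized minimization over $\cC_\tau$ and the minimization of the log-partition functional $G$, and that the latter attains its infimum. Superlinearity of $L$ is what makes both sides finite and coercive and prevents escape of mass in the velocity variable, while strict convexity of the entropy is what yields uniqueness of $\mu_\tau$ and pins down $u_\tau$ up to constants; the only point requiring genuine care beyond this is the sign bookkeeping in \eqref{def Lax}, which decides whether the partition function equals $e^{+\rho_\tau}$ or $e^{-\rho_\tau}$ and hence the sign in \eqref{eq:nogao}.
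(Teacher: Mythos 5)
Your existence and uniqueness skeleton coincides with the paper's: a Kakutani fixed point for the best-response correspondence (the paper runs it on $\Psi_\tau:\cC_\tau\to 2^{\cC_\tau}$ rather than on projected measures, but that is cosmetic) and the Lasry--Lions cross-testing for uniqueness (note the monotonicity in the local case comes from $(B4)$, strict monotonicity of $F$, not from $(B5)$). The genuine gap is in the compactness step for the case $F(m)=\log m$. You assert that superlinearity of $L$ alone makes the minimizing family tight, but when $F=\log$ the coupling $\log\mu$ is not bounded below, so a bound on the action $\int L+\log\mu\,d\tilde\mu$ does not by itself bound $\int L\,d\tilde\mu$: one needs a lower bound on $\log\mu$, which in turn requires an a priori bound on $\int L\,d\tilde\mu$. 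The argument is circular unless one exhibits an invariant set. The paper resolves this (Proposition \ref{bound}) by using Jensen's inequality on the heat-kernel representation of the projected density to get $\log\mu(z)\ge -\tfrac{\tau}{2}\bigl(c_1\int L\,d\tilde\mu+c_2\bigr)+\log B_\tau$, which yields an invariant set $A_\tau=\{\int L\,d\tilde\mu\le C_\tau\}$ only under the smallness condition $\tau<2/c_1$; that condition is invisible in your sketch and cannot be dispensed with by Prokhorov alone. (Closedness of the graph in the local case also needs the uniform density bound $\mu\le(4\pi\tau)^{-d/2}$ to pass to the limit in $\int F(\mu_n)\,d\nu_n$; continuity of $F$ is not enough.)

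For \eqref{eq:nogao} your route through Lagrange multipliers and a no-duality-gap theorem is genuinely different from the paper's, and the step you yourself flag as the main obstacle is precisely the one the paper avoids. The paper proves weak duality elementarily: for any $\tilde\mu\in\cC_\tau$ and any $\phi\in C(\T^d)$, the $\tau$-holonomy identity $\int\eta^\tau*\phi(x+\tau q)\,d\tilde\mu=\int\phi\,d\mu$ together with the pointwise bound $\eta^\tau*\phi(x+\tau q)-\tau L(x,q)\le\cL_\tau\phi(x)$ gives $\int L\,d\tilde\mu\ge-\int\frac{\phi-\cL_\tau\phi}{\tau}\,d\mu$, and then the entropy functional $a(m,\phi)=\int\bigl(\frac{\cL_\tau\phi-\phi}{\tau}-\log m\bigr)dm$ is maximized over densities exactly at the Gibbs density $m_\phi$, with value $\lambda_\phi=\log\int e^{(\cL_\tau\phi-\phi)/\tau}$. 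This yields $\int L+\log\mu\,d\tilde\mu\ge-\lambda_\phi$ for every pair, and the gap is closed not by abstract duality but by checking that the actual discrete solution $(u_\tau,\tilde\mu_\tau)$ achieves equality, since the first equation gives $e^{(\cL_\tau u_\tau-u_\tau)/\tau}=e^{\rho_\tau}\mu_\tau$ and the second gives the value of the action. This is both more elementary and removes the attainment question for $\inf_\phi G(\phi)$, since $\phi=u_\tau$ is exhibited as a minimizer. Finally, your bookkeeping $G(u)=e^{-\rho_\tau}$ is off: integrating the Hopf--Cole relation gives $\int e^{(\cL_\tau u_\tau-u_\tau)/\tau}\,dx=e^{\rho_\tau}$, so both expressions in \eqref{eq:nogao} in fact equal $-\rho_\tau$ (consistently with the second equation of \eqref{eq:disc-MFG}); you correctly identified that this sign needed care but resolved it the wrong way.
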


Next, in Section \ref{sec:aprox}, we establish our main results;
that is, we prove the convergence of the solutions of the $\tau$-discrete
MFG system, Problem \ref{problem}, to the classical solution of
\eqref{eq:estacionario}, when $\tau\to 0^+$. First, we prove helpful
properties of the operator  $\cL_\tau$ (see Subsection
\ref{sec:pre}). In Subsection \ref{sec:nonlocal}, we examine the
nonlocal coupling case and,  relying on semi-convexity estimates, prove

\begin{theorem}\label{nonlocal} Suppose that $L$ satisfies $(A1)$ and $(A2)$ and $\bF$ satisfies $(B1)$, $(B2)$, and $(B3)$. Let $x_0\in\T^d$ and
	let $(\rho_\tau,u_\tau,\tilde\mu_\tau)$ be a solution of the discrete
	MFG \eqref{eq:disc-MFG} with $u_\tau(x_0)=0$ and 
	$\mu_\tau={\Pr}_{1\#}\tilde\mu$. Let $(\rho,u,m)$ solves the
	ergodic MFG \eqref{eq:estacionario} with $u_0(x_0)=0$.  Then, 
	\begin{enumerate}[(i)]
		\item $\lim\limits_{\tau\to 0^+}\rho_\tau=\rho$,
                  \item $\lim\limits_{\tau\to 0^+}u_\tau=u$  uniformly,
		\item $\mu_\tau \rightharpoonup m$.
	\end{enumerate}
\end{theorem}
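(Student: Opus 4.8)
The plan is to follow the standard three-step strategy for convergence of approximation schemes: first extract uniform-in-$\tau$ compactness for all three unknowns, then pass to the limit along a subsequence to produce a solution of \eqref{eq:estacionario}, and finally invoke uniqueness of the ergodic MFG to promote subsequential convergence to convergence of the whole family. The starting point is a uniform bound on the ergodic constants $\{\rho_\tau\}$. Writing the first equation of \eqref{eq:disc-MFG} as $\rho_\tau = (\cL_\tau u_\tau(x)-u_\tau(x))/\tau - \bF(x,\mu_\tau)$ and evaluating it at the maximum and the minimum points of $u_\tau$, elementary upper and lower bounds on $\cL_\tau$ combined with the boundedness of $\bF$ from $(B1)$ confine $\rho_\tau$ to a compact interval independent of $\tau$.

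For the value functions I would exploit the structure of the Lax operator $\cL_\tau$. Since $\cL_\tau u$ is built from $u$ against the kernel $\tau L(x,\cdot)$, it inherits one-sided second-difference bounds from $L$ that, crucially, are uniform in $\tau$ under $(A1)$--$(A2)$. Combined with the normalization $u_\tau(x_0)=0$ and the uniform bound on $\rho_\tau$, these semiconvexity estimates upgrade to a uniform Lipschitz bound on $\T^d$, so by Arzel\`a--Ascoli a subsequence $u_{\tau_k}$ converges uniformly to some $\bar u$ with $\bar u(x_0)=0$. For the measures, the second equation of \eqref{eq:disc-MFG} gives $\int L\, d\tilde\mu_\tau = -\rho_\tau - \int \bF\, d\tilde\mu_\tau \le C$; since $L$ is a Tonelli Lagrangian, superlinearity in $q$ turns this energy bound into tightness of $\{\tilde\mu_\tau\}$, and a further subsequence yields $\tilde\mu_{\tau_k}\rightharpoonup \bar{\tilde\mu}$ with $\mu_{\tau_k}\rightharpoonup \bar m := \Pr_{1\#}\bar{\tilde\mu}$.

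The heart of the argument is passing to the limit. On the Hamilton--Jacobi side I would use the consistency of $\cL_\tau$ established in Subsection \ref{sec:pre} --- namely $(\cL_\tau\varphi-\varphi)/\tau \to \De\varphi + H(x,D\varphi)$ for smooth $\varphi$ --- inside a stability argument of Barles--Souganidis type, monotonicity of $\cL_\tau$ being automatic; together with $\bF(x,\mu_{\tau_k})\to\bF(x,\bar m)$ from continuity of $\bF$ under $(B2)$ and the weak convergence of $\mu_{\tau_k}$, this shows $\bar u$ solves $\De \bar u + H(x,D\bar u) = \bar\rho + \bF(x,\bar m)$ in the viscosity sense. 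On the transport side I would pass to the limit in the defining (diffusive) closedness constraint of $\cC_\tau$ to obtain the weak form of the continuity equation for $\bar{\tilde\mu}$; the dual identity carried by the second equation of \eqref{eq:disc-MFG}, combined with lower semicontinuity of $\tilde\mu\mapsto\int L\, d\tilde\mu$, forces $\bar{\tilde\mu}$ to be a \emph{minimizer}, which identifies its velocity marginal as $D_pH(x,D\bar u)$ and recovers the Fokker--Planck equation. Turning this lower-semicontinuity inequality into the equality that couples $\bar m$ to $D_pH(x,D\bar u)$ is where I expect the main difficulty to lie.

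Finally, the limit $(\bar\rho,\bar u,\bar m)$ solves \eqref{eq:estacionario}. Under the monotonicity hypothesis $(B3)$ on $\bF$, the Lasry--Lions argument gives uniqueness of $(\rho,m)$ and of $u$ up to additive constants; with the common normalization $u(x_0)=\bar u(x_0)=0$ this yields $\bar\rho=\rho$, $\bar u=u$, and $\bar m=m$, so the limit coincides with the given classical solution. Since the limit is independent of the extracted subsequence, the full family converges, establishing (i)--(iii).
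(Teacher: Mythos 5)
Your overall strategy coincides with the paper's: uniform bounds on $\rho_\tau$ and precompactness of $\{\tilde\mu_\tau\}$ from the energy bound $\int L\,d\tilde\mu_\tau\le C$ (Remark \ref{rel-comp}), uniform Lipschitz bounds on $u_\tau$ via semiconvexity plus Arzel\`a--Ascoli, a Barles--Souganidis-type stability argument using the consistency Lemma \ref{discrete-continuous} to show the limit is a viscosity solution of $\De \bar u+H(x,D\bar u)=\bar\rho+\bF(x,\bar m)$, identification of the limit measure as the stochastic Mather measure of the effective Lagrangian $L+\bF(\cdot,\bar m)$ via the dual variational characterization (Theorem \ref{teo stochastic Mather problem}), and uniqueness of the ergodic MFG to upgrade subsequential to full convergence. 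The step you single out as the main difficulty --- turning the minimality of $\bar{\tilde\mu}$ into the Fokker--Planck equation with drift $D_pH(x,D\bar u)$ --- is resolved in the paper exactly along the lines you propose, by invoking the characterization of minimizing holonomic measures as $G_{V\#}\nu$.

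There is, however, one genuine gap. You assert that $\cL_\tau u$ ``inherits one-sided second-difference bounds from $L$ that, crucially, are uniform in $\tau$.'' This is false for a general $u\in C(\T^d)$: the direct computation (Lemma \ref{semiconvex}) yields the semiconvexity modulus $\tau k_1+k_2/\tau$, which blows up as $\tau\to 0^+$, because the only way to absorb the increment $h$ without information on $u$ is to shift the velocity by $h/\tau$. The uniform-in-$\tau$ semiconvexity of the \emph{solutions} $u_\tau$ is not a property of the operator alone; it comes from the fixed-point identity $u_\tau=\cL_\tau u_\tau-\tau\bF(\cdot,\mu_\tau)-\tau\rho_\tau$, which permits a bootstrap: given a semiconvexity modulus $\Lambda_n$ for $u_\tau$, one splits the increment $h$ between the base point and the shifted argument $x+\tau q$ with an optimal weight $\theta\in[0,1]$ and obtains the improved modulus $\Lambda_{n+1}=\tau(k_1+k_0)+k_2\Lambda_n/(k_2+\Lambda_n\tau)$, whose iteration converges to a $\tau$-uniformly bounded limit (Lemma \ref{uni-semicon}). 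This step also consumes hypothesis (B3) --- the uniform semiconcavity of $x\mapsto\bF(x,m)$ --- which your sketch never deploys for this purpose. Relatedly, you have permuted the hypothesis labels: continuity of $\bF$ is (B1), strict monotonicity (used for uniqueness and the Lasry--Lions argument) is (B2), and (B3) is the second-difference bound; as written you attribute continuity to (B2) and monotonicity to (B3).
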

\begin{remark} Under the assumptions of Theorem \ref{nonlocal},
  Theorem 2.1 in \cite{BFl} ensures the existence of classical solutions to \eqref{eq:estacionario}. 
\end{remark}

The particular case of
\eqref{eq:disc-MFG} with the local coupling $F(m)=m^a$
is examined in Subsection \ref{bound.loc}. There,  
 we use the 
hypercontractivity of the Hamilton-Jacobi equation to get uniform
bounds for the solution of the $\tau$-discrete MFG
\eqref{eq:disc-MFG}. Using these bounds and using
the  monotonicity (see  \cite{Eva,FG2,FGT1}), we define weak solutions to
the $\tau$-discrete MFG system and to the ergodic MFG system (see
Definition \ref{weak}). Finally, in Section \ref{sec:weak-approach}, using Minty's method, 
we prove that
 normalized solutions to the discrete MFG system \eqref{eq:disc-MFG}
 weakly converge to a classical solution to
 \eqref{eq:estacionario}. 
 
 
\begin{theorem}\label{local}
	Suppose that $L$ satisfies $(A3)$ and $F$  satisfies $(B6)$.
	Let $(\rho_\tau,u_\tau,\tilde\mu_\tau)$ with  $\mu_\tau=\Pr_{1\#}\tilde\mu_\tau$ be the solution to the discrete
	MFG \eqref{eq:disc-MFG} such that $\max\eta^\tau*u_\tau=0$. Then, as $\tau\to 0$,
	$(\rho_\tau,\mu_\tau)$ converges in $\R\times\cP_\ell$ to
	$(r,m)$, $u_\tau$ converges weakly in $L^{1+1/a}$ to $u$, and
	$(r,u,m)$ is a classical solution to  \eqref{eq:estacionario}. 
\end{theorem}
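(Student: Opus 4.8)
The plan is to follow the monotone-operator strategy announced in the introduction: use hypercontractivity for compactness, the Lasry--Lions monotonicity to set up Minty's method, and the existing regularity theory to promote the weak limit to a classical solution. First I would collect the uniform-in-$\tau$ bounds. The hypercontractivity estimates for the discrete operator $\cL_\tau$ established in Subsection \ref{bound.loc} yield a bound on $\|u_\tau\|_{L^{1+1/a}}$ and, dually (since $1+1/a$ and $1+a$ are conjugate), a bound on $\|\mu_\tau\|_{L^{1+a}}$, together with a bound on $|\rho_\tau|$. The normalization $\max\eta^\tau*u_\tau=0$ pins down the additive constant and keeps the family $\{u_\tau\}$ from escaping to infinity. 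From these bounds and weak compactness --- reflexivity of $L^{1+1/a}$, weak-$*$ compactness of probability measures, and Bolzano--Weierstrass for $\rho_\tau$ --- I would extract a subsequence along which $\rho_\tau\to r$ in $\R$, $u_\tau\rightharpoonup u$ weakly in $L^{1+1/a}$, and $\mu_\tau\rightharpoonup m$ in $\cP_\ell$. The target is to show that the limit $(r,u,m)$ satisfies the weak formulation of \eqref{eq:estacionario} from Definition \ref{weak}.

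The crux is Minty's method. Writing $\cA$ for the monotone MFG operator of \cite{FG2,FGT1}, whose monotonicity comes from the convexity of $H$ together with the monotonicity of $F(m)=m^a$, I would exploit that each $(\rho_\tau,u_\tau,\mu_\tau)$ is a weak solution of the $\tau$-discrete system. For any smooth test pair $(\phi,\psi)$, with $\psi$ a probability density, rearranging the discrete monotonicity inequality gives
\[
\langle \cA(\phi,\psi),(u_\tau,\mu_\tau)-(\phi,\psi)\rangle\le \langle \cA(u_\tau,\mu_\tau),(u_\tau,\mu_\tau)-(\phi,\psi)\rangle,
\]
and the right-hand side is controlled by replacing $\cA(u_\tau,\mu_\tau)$ with $(\rho_\tau,0)$ via the discrete equation, up to the consistency error between $\cL_\tau$ and the differential operator. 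Because $\cA(\phi,\psi)$ depends only on the fixed smooth pair, the left-hand side passes to the weak limit, and in the limit one obtains $\langle \cA(\phi,\psi)-(r,0),(\phi,\psi)-(u,m)\rangle\ge 0$. Testing with $(\phi,\psi)=(u,m)+\epsilon(\xi,\zeta)$, dividing by $\epsilon$, and sending $\epsilon\to0^+$ (using hemicontinuity of $\cA$) then removes the inequality and shows that $(r,u,m)$ solves \eqref{eq:estacionario} in the weak sense.

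The main obstacle will be making the consistency between $\cL_\tau$ and the continuous Hamilton--Jacobi operator sharp enough to absorb the $O(\tau)$-type error terms $\cA(u_\tau,\mu_\tau)-\cA_\tau(u_\tau,\mu_\tau)$ paired against $(u_\tau,\mu_\tau)-(\phi,\psi)$ on the right-hand side above, since only weak convergence of $u_\tau$ is available there and the nonlinear terms $F(\mu_\tau)$ and $H(x,Du_\tau)$ do not pass to the limit on their own. The entire point of the monotone formulation is to route these uncontrolled nonlinearities through $\cA$ evaluated at the smooth test pair, so that the dangerous terms never have to converge individually.

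Finally, I would carry out the regularity bootstrap. Once $(r,u,m)$ is identified as a weak solution of \eqref{eq:estacionario} with the local coupling $F(m)=m^a$, I would invoke the a priori estimates for local-coupling stationary MFGs from \cite{GM,GPatVrt,PV15} to conclude that $u$ and $m$ are smooth, so that $(r,u,m)$ is in fact a classical solution. A standard uniqueness argument for the monotone system then shows that the limit is independent of the subsequence, upgrading the subsequential convergence to full convergence as $\tau\to0$.
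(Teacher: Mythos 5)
Your overall strategy---uniform bounds from the hypercontractivity estimates, extraction of weak limits, and Minty's method through the monotone operator formulation---is the same as the paper's, and the first half of your argument (compactness and passing to the limit in the monotonicity inequality tested against fixed smooth triples, using the consistency $A^\tau\to A^0$ on smooth data, which is Proposition \ref{op-conv}) matches the paper's proof. The gap is in how you close the argument. Your step ``testing with $(\phi,\psi)=(u,m)+\epsilon(\xi,\zeta)$, dividing by $\epsilon$, and sending $\epsilon\to0^+$'' is the textbook Minty hemicontinuity trick, but it is inadmissible here: the test class in Definition \ref{weak} consists of $C^2$ functions, while the limit $u$ is only known to lie in $L^{1+1/a}$ and $m$ is only a nonnegative $L^1$ density, so $(u,m)+\epsilon(\xi,\zeta)$ cannot be inserted into $A^0$ and the inequality cannot be ``removed'' this way. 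For the same reason your subsequent ``regularity bootstrap'' does not get off the ground: the a priori estimates of \cite{GM,GPatVrt,PV15} apply to (classical or strong) solutions of the PDE system, not to a Minty-type variational inequality, so you cannot invoke them to upgrade the weak limit to a smooth solution.

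The paper closes the loop differently, and this is the missing idea: it proves a weak--strong uniqueness lemma (Lemma \ref{unique1}), showing that if a \emph{classical} solution $(\rho,\fui,\mu)$ of \eqref{eq:estacionario} exists, then any weak solution in the sense of Definition \ref{weak} must coincide with it (up to an additive constant in $u$), by differentiating the monotonicity pairing along the line through the classical solution and using the strict positivity of $F'$ and $D^2_{pp}H$. Since existence of a classical solution under $(A3)$ and $(B6)$ is guaranteed by \cite{PV15}, the weak limit is automatically that classical solution, and uniqueness of the limit also upgrades the subsequential convergence to full convergence. If you replace your hemicontinuity-plus-bootstrap step with this weak--strong uniqueness argument (or prove it yourself), your proof is complete.
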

\begin{remark} Under the assumptions of Theorem \ref{local},  Theorem 1 in \cite{PV15} ensures the existence of classical solutions to \eqref{eq:estacionario} for this local case. 
\end{remark}

\section{Background}
\label{backg}

Let $L:\R^d\times\R^d\to\R$ be a smooth Tonelli Lagrangian, $\Z^d$--periodic in the
space variable $x$, or equivalently, let $L$ be defined on $\T^d\times\R^d$.
Let $H:\R^d\times\R^d\to\R$, the Hamiltonian, be the Legendre transform of $L$.

Our discretized model is motivated by our previous study
in \cite{DIIS}
 of a
discrete approximation of the viscous
Hamilton--Jacobi equation,  
\begin{equation}\label{eq viscous HJ}
  \Delta u +H(x,Du)=\af_0,
\end{equation}
and by the discrete approximation of the stochastic Mather measures
introduced in \cite{G}, whose definition we recall next. 


Let $\cP(\T^d\times\R^d)$  be the set of Borel probability measures on $\T^d\times\R^d$.
Let $C^0_\ell$ be the set of continuous functions 
 $f: \T^d\times\R^d\to\R$ having at most linear  growth; that is,
 $$
 \lV f\rV_\ell:=\sup_{(x,q)\in \T^d\times\R^d}\frac{|f(x,q)|}{1+\lV q\rV}<+\infty.
 $$
 Let $\cP_\ell$  be the set of measures in $\cP(\T^d\times\R^d)$ such that
 $$
 \int_{\T^d\times\R^d}\lV q\rV\;d\mu<+\infty.
 $$
We endow $\cP_\ell$ with the topology  corresponding to the following convergence: for $\mu_n\in \cP_\ell$,  $\lim\limits_{n\to\infty}\mu_n=\mu$ if and only if
 \[
 \lim\limits_{n\to\infty}\int f\;d\mu_n=\int f\;d\mu,
 \]
 for all $f\in C^0_\ell$.
 
 Let $(C^0_\ell)'$ be the dual of $C^0_\ell$. Then,  $\cP_\ell$ is
 naturally embedded in $(C^0_\ell)'$, and its topology is 
the weak*- induced topology on $(C^0_\ell)'$. 
 This topology is metrizable.  To see this,
 let $\{f_n\}$ be a sequence of functions with compact support 
 on  $C^0_\ell$, which is dense on $C^0_\ell$ in the topology 
 of uniform convergence on compact sets of $\T^d\times\R^d$. 
 The metric $d$ on $\cP_\ell$ defined by
 \[
   d(\mu_1,\mu_2)=
   \Big|\int |q| \;d\mu_1-\int|q|\;d\mu_2\Big|
   +\sum_n\frac{1}{2^n\|f_n\|_\infty}
   \Big|\int f_n\; d\mu_1-\int f_n\; d\mu_2\Big|\]
 gives the topology of $\cP_\ell$.

 \begin{remark}\label{Ma}
According to \cite{Ma__1996}, for $c\in\R$, the set 
\[A(c):=\Big\{\tilde\mu\in \cP_\ell:\ \int_{\T^d\times\R^d} L\ d\tilde\mu\le c\Big\}\] 
is compact in $\cP_\ell$.
\end{remark}
\begin{definition}\label{def holonomic measure}
 We denote by  $\cC$ the set of {\em holonomic}
 measures; that is,  the  closed convex subset of measures in $\cP_\ell$ that satisfy
 \begin{equation*}
      \int_{\T^d\times\R^d} \left(\Delta\varphi(x)+\langle
    D\varphi(x),q\rangle \right)\ d\tilde\mu(x,q)=0
\end{equation*}
for all $\varphi\in C^2(\T^d).$
\end{definition}
The set $\cC$ is non-empty as the  measure $\tilde\nu$ given by
  \begin{equation}
    \label{eq:example}
\int f d\tilde\nu = \int f(x,0) dx,\quad f\in C^0_\ell 
  \end{equation}
is holonomic.
  
  More generally, let $V:\T^d\to\R^d$ be a $C^1$ vector field and let
$\mu\in \cP(\T^d)$ be the unique weak solution of the following
Fokker--Planck equation.
\begin{equation}\label{eq FK}
\Delta\mu-\diver\left(V(x)\mu\right)=0\qquad\hbox{in $\T^d$}.
\end{equation}
For $G_V(x)=(x,V(x))$, we have that $\tilde\mu:=G_{V\#}\mu$ is a
holonomic measure.

According to \cite{G}, the ergodic constant in \eqref{eq
  viscous HJ} is determined both by the minimization problem
\begin{equation*}
  \af_0=\min_{\fui\in C^2(\T^d)}\max_x \Delta\fui(x) +H(x,D\fui(x))
\end{equation*}
and by the dual problem
\begin{equation}\label{eq:dual}
  -\af_0:=\min_{\tilde\mu\in\cC} \int_{\T^d\times\R^d} L\ d\tilde\mu.
\end{equation}
Holonomic measures achieving the minimum in \eqref{eq:dual} 
are called {\em stochastic Mather measures}.

\begin{theorem}[\cite{G}]\label{teo stochastic Mather problem}
A measure $\tilde\mu$ is a stochastic Mather measure if and only if 
$\tilde\mu:={G_V}_{\#}\mu$ where $\mu\in\cP(\T^d)$ is the solution of \eqref{eq FK} 
with $V(x):=D_p H(x,Du(x))$ and $u$ is any solution to
\eqref{eq viscous HJ}
\end{theorem}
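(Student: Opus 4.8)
The plan is to prove both implications simultaneously from the Fenchel--Young inequality, exploiting that Legendre duality is tight exactly along the Hamiltonian flow direction. First I would record that for every $\varphi\in C^2(\T^d)$ one has $L(x,q)\ge\langle D\varphi(x),q\rangle-H(x,D\varphi(x))$, with equality precisely when $q=D_pH(x,D\varphi(x))$; this is immediate from the Tonelli (strict convexity, superlinearity) assumptions on $L$. Taking $\varphi=u$ a solution of \eqref{eq viscous HJ} --- which is $C^2$ (indeed smooth) by elliptic regularity for the semilinear equation, so that $Du$ is continuous and $V=D_pH(\cdot,Du)$ is a genuine $C^1$ vector field --- and using that $\Delta u+H(x,Du)=\af_0$ holds identically, I would compute, for an arbitrary holonomic $\tilde\mu\in\cC$,
\[
\int L\,d\tilde\mu
=\int\big(L(x,q)-\langle Du(x),q\rangle-\Delta u(x)\big)\,d\tilde\mu
\ge-\int\big(H(x,Du(x))+\Delta u(x)\big)\,d\tilde\mu=-\af_0 ,
\]
where the first equality subtracts the holonomy constraint of Definition \ref{def holonomic measure} tested against $u$, and the last step uses $\tilde\mu(\T^d\times\R^d)=1$. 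This recovers the bound in \eqref{eq:dual} and shows that $\tilde\mu$ is a stochastic Mather measure if and only if equality holds throughout.

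The only inequality in the chain is the Fenchel--Young step, so equality holds if and only if $L(x,q)-\langle Du(x),q\rangle=-H(x,Du(x))$ for $\tilde\mu$-almost every $(x,q)$, equivalently $q=D_pH(x,Du(x))=V(x)$ $\tilde\mu$-a.e. Hence a stochastic Mather measure is concentrated on the graph $\{(x,V(x)):x\in\T^d\}$, and a standard disintegration then yields $\tilde\mu=G_{V\#}\mu$ with $\mu={\Pr}_{1\#}\tilde\mu\in\cP(\T^d)$. Since any two solutions of \eqref{eq viscous HJ} differ by an additive constant, $V$, and therefore this representation, does not depend on the choice of $u$.

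It remains to identify $\mu$ with the Fokker--Planck solution and to prove the converse. Substituting $q=V(x)$ into the holonomy identity gives $\int_{\T^d}\big(\Delta\varphi+\langle D\varphi,V\rangle\big)\,d\mu=0$ for all $\varphi\in C^2(\T^d)$, which is exactly the weak form of $\Delta\mu-\diver(V\mu)=0$; thus $\mu$ solves \eqref{eq FK}, and by uniqueness of the invariant density it equals the solution there. For the converse I would run the computation backwards: given $\tilde\mu=G_{V\#}\mu$ with $\mu$ solving \eqref{eq FK}, the weak form shows $\tilde\mu\in\cC$, the identity $q=V(x)$ makes Fenchel--Young tight on the support of $\tilde\mu$, and testing \eqref{eq FK} against $u$ converts $\int L\,d\tilde\mu$ into $-\int(\Delta u+H(x,Du))\,d\mu=-\af_0$, so $\tilde\mu$ attains the minimum. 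The main obstacle is the regularity input underpinning the whole argument: one must know a priori that \eqref{eq viscous HJ} possesses a classical solution $u$, so that $V$ is well defined and \eqref{eq FK} has a unique smooth positive invariant density; once this is granted, the remainder is a clean duality computation.
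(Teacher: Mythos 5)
This theorem is imported verbatim from \cite{G}; the paper supplies no proof of its own, so there is nothing internal to compare against. Your argument is correct and is essentially the standard duality proof from that reference: test the holonomy constraint with the (smooth, by elliptic regularity) solution $u$ of \eqref{eq viscous HJ}, apply Fenchel--Young to get $\int L\,d\tilde\mu\ge-\af_0$ with equality exactly when $\tilde\mu$ is concentrated on the graph of $V=D_pH(\cdot,Du)$, identify the projected measure with the unique invariant density of \eqref{eq FK}, and reverse the computation for the converse. You correctly isolate the one nontrivial input --- classical regularity of $u$ and uniqueness of the positive invariant density for \eqref{eq FK} --- both of which are established in \cite{G} and taken for granted in this paper.
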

Because solutions to \eqref{eq viscous HJ} are unique, up to additive constants,
there is only one stochastic Mather measure for each Lagrangian.

\section{The discrete MFG problem}
\label{sec:discrete-mfg-problem}

Now, we examine in detail Problem \ref{problem} and explore related discrete MFG models. 

\subsection{Statement of the problem}
\label{sec:statement-problem}

For the discrete problem, the source of randomness corresponds to the heat kernel
$\eta^\tau$  on $\T^d=\R^d/\Z^d$; that is, 
\begin{align*}
  \eta^\tau(z)&=\frac{1}{(4\pi\tau)^{\frac d2}}
\sum_{k\in\Z^d}e^{-\frac{|z+k|^2}{4\tau}}
=\sum_{k\in\Z^d}e^{-\tau|2\pi k|^2+2\pi ik\cdot z}.
  \end{align*}
  Accordingly, for $u\in C(\R^d)$, $\Z^d$--periodic, we have
  \[ (\eta^\tau*u)(y)=\int_{\T^d}  \eta^\tau(y-z) u(z)\ dz
=\frac{1}{(4\pi\tau)^{\frac d2}}\int_{\R^d}e^{-\frac{|y-z|^2}{4\tau}}u(z)\ dz. \]
Next, we define the operator $\cL_{\tau}:C(\T^d)\to C(\T^d)$ as  
\begin{equation}\label{def Lax}
  \cL_\tau u(x):=\max_{q\in\R^d}\big((\eta^\tau*u)(x+\tau q)-\tau L(x, q)\big)
  \hbox{ for  }x\in \R^d.  
\end{equation}

As discussed in \cite{DIIS},
the 
discrete version \eqref{eq viscous HJ}, 
\begin{equation}\label{discreteHJ}
	u=\cL_\tau u-\tau\af_\tau,  
\end{equation}
arises from 
the discretization of the stochastic Lax formula.

We recall the following results concerning \eqref{discreteHJ} that were 
established in \cite{DIIS}.
\begin{enumerate}[(i)]
\item   There is a unique value $\af_\tau\in\R$ for which \eqref{discreteHJ}
  admits solutions. This value is given by 
  \begin{equation}\label{eq:constant}
    \af_\tau=
    \min_{\fui\in C(\T^d)}\max_{x,q}\frac{(\eta^\tau*\fui)(x+\tau q)-\fui(x)}\tau- L(x,q)  
  \end{equation}
  and satisfies
  \[\min_{\T^d\times\R^d} L\le -\af_\tau\le \max_{x\in\T^d}\min_{q\in\R^d}L(x,q).\]
\item A solution $u\in C(\T^d)$ to \eqref{discreteHJ} is unique up to additive constants.
\end{enumerate}

For the continuous second-order Hamilton-Jacobi equation, the preceding results 
were established in \cite{G}.

Now, we fix $u\in C(\T^d)$ and  let
\[
V_u(x):=\arg\max[(\eta^\tau * u)(x+\tau q) -\tau L(x,q)].
\]
Then, we have the following two additional results.

  \begin{enumerate}[(i)]\setcounter{enumi}{2}
   \item If $q\in \V_u(x)$ then \[D(\eta^\tau*u)(x+\tau q)=D_qL(x,q)\]
  or, equivalently,
  \[q= D_pH(x ,D(\eta^\tau*u)(x+\tau q)).
  \]

\item 
There is a Borel measurable map $V:\T^d\to\R^d$ that is optimal for
$u$; that is, $V(x)\in \V_u(x)$ for all $x\in\T^d$. Moreover, there is $\tau_u>0$
such that  for all $\tau<\tau_u$, $\V_u(x)$ is a singleton and
\[D^2_{qq}\left[\frac{\eta^\tau*u(x+\tau q)-u(x)}\tau- L(x,q)\right]=
\tau D^2(\eta^\tau*u)(x+\tau q)-D^2_{qq}L(x,q)\]
is negative definite for $q=\V_u(x)$.

\item If $u\in C(\T^d)$ and $V:\T^d\to\R^d$ Borel measurable satisfies
  \begin{equation}\label{eq:sol-control}
     (\eta^\tau*u)(x+\tau V(x))-u(x)-\tau L(x,V(x))=\tau\af_\tau, 
  \end{equation}
then $u$ solves \eqref{discreteHJ} and $V$ is optimal for
$u$. 
\end{enumerate}

We follow the generalized scheme in \cite{MR2458239} to describe the dual of the
minimization problem \eqref{eq:constant}.

\begin{definition}\label{def-C_tau}
We denote by $\cC_\tau$ the set of $\tau$--holonomic
measures; that is, the closed convex subset of measures
$\tilde\mu\in\cP_\ell$ such that the projected measure
 $\mu=\Pr_{1\#}\tilde\mu$ satisfies
\[\mu(A)=\int_A \int_{\T^d\times \R^d} \eta^\tau(x+\tau q-z)
  \ d\tilde\mu(x,q)  \ dz \]
for any Borel set $A\subset\T^d$.
\end{definition}
We observe that
\[m_{\tilde\mu}(z)=\int_{\T^d\times \R^d} \eta^\tau(x+\tau q-z)
  \ d\tilde\mu(x,q)\]
defines a smooth function. Thus, when $\tilde\mu$ is $\tau$--holonomic,
its projected measure has a smooth density that we identify
with this projected measure. Note that we have the following upper bound
 \begin{equation}
 \label{ubm}
 m_{\tilde\mu}\le (4\pi\tau)^{-d/2}.
\end{equation}

The dual problem to \eqref{eq:constant} is
\begin{equation}
  \label{discrete-dual}
  -\af_\tau=\min\left\{\int_{\T^d\times\R^d} L\ d\tilde\mu:\tilde\mu\in
    \cC_\tau\right\}.
  \end{equation}

The measure given in \eqref{eq:example} is $\tau$-holonomic. 
More generally,

\begin{proposition}\label{prop tau-holonomic}\cite{DIPPS}
  Any Borel measurable map $V:\T^d\to\R^d$, defines a
    $\tau$--holonomic measure $\tilde\mu^V$ such that
  \begin{enumerate}[(a)]
  \item If $u\in C(\T^d)$ and $V$ satisfy  \eqref{eq:sol-control}, then
  \[-\af_\tau=\int_{\T^d\times\R^d} L\; d\tilde\mu^V.\]
  \item   If $V$ is continuous there
   is $\mu\in\cP(\T^d)$ such that $\tilde\mu^V=G_{V\#}\mu$, and in
   particular, for any $f\in C(\T^d)$ we have 
\[\int_{\T^d} \eta^\tau*f(x+\tau V(x))-f(x) \;d\mu(x) =0.\]
 \end{enumerate}
\end{proposition}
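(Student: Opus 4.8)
The plan is to realize $\tilde\mu^V$ as the lift to the graph of $V$ of an invariant probability measure for the Markov transition attached to $V$. Writing $S_V(x)=x+\tau V(x)$, define $P\colon\cP(\T^d)\to\cP(\T^d)$ by letting $P\mu$ be the (probability) measure with density $z\mapsto\int_{\T^d}\eta^\tau(x+\tau V(x)-z)\,d\mu(x)$, i.e.\ $P\mu=\eta^\tau*(S_{V\#}\mu)$; its adjoint on test functions is $P^*g(x)=(\eta^\tau*g)(x+\tau V(x))$. The three steps are: (1) produce $\mu$ with $P\mu=\mu$; (2) set $\tilde\mu^V:=G_{V\#}\mu$ and observe that invariance is exactly $\tau$--holonomy; (3) read off (a) and (b) from the invariance identity $\int P^*g\,d\mu=\int g\,d\mu$.

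For step (1), which I expect to be the only genuine difficulty, note first that by \eqref{ubm} every measure in the range of $P$ has density bounded by $(4\pi\tau)^{-d/2}$. Fix $\mu_0\in\cP(\T^d)$ and form the Ces\`aro averages $\bar\mu_N=\tfrac1N\sum_{n=1}^N P^n\mu_0$; for $N\ge1$ their densities $f_N$ are bounded in $L^2(\T^d)$, so a subsequence satisfies $f_{N_j}\rightharpoonup f$ weakly in $L^2$, and $\mu:=f\,dz\in\cP(\T^d)$. Since $P^*g\in L^\infty(\T^d)\subset L^2(\T^d)$ for $g\in C(\T^d)$, weak $L^2$ convergence gives both $\int g\,d\bar\mu_{N_j}\to\int g\,d\mu$ and $\int g\,dP\bar\mu_{N_j}=\int P^*g\,d\bar\mu_{N_j}\to\int P^*g\,d\mu=\int g\,dP\mu$; as the telescoping identity $P\bar\mu_N-\bar\mu_N=\tfrac1N(P^{N+1}\mu_0-P\mu_0)$ forces $\int g\,dP\bar\mu_N-\int g\,d\bar\mu_N\to0$, we conclude $\int g\,dP\mu=\int g\,d\mu$ for all $g\in C(\T^d)$, i.e.\ $P\mu=\mu$. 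When $V$ is continuous, $S_V$ is continuous, $P^*$ maps $C(\T^d)$ into itself, and this is just Krylov--Bogolyubov; the weak-$L^2$ argument above (resting on the uniform density bound) is what is needed to cover a merely Borel $V$, for which the Feller property fails.

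For step (2), with $G_V(x)=(x,V(x))$ we have $\Pr_{1\#}\tilde\mu^V=\mu$ and the density $m_{\tilde\mu^V}(z)=\int_{\T^d}\eta^\tau(x+\tau V(x)-z)\,d\mu(x)$ coincides with the density of $P\mu$; thus $P\mu=\mu$ is precisely the condition of Definition \ref{def-C_tau}. Provided $\int\|V\|\,d\mu<\infty$ (automatic when $V$ is bounded, as is the optimal $V$, or whenever $V\in L^1(dx)$ since $\mu$ has bounded density) we also have $\tilde\mu^V\in\cP_\ell$, so $\tilde\mu^V\in\cC_\tau$. Finally, for step (3) the invariance identity reads, for every $f\in C(\T^d)$,
\[ \int_{\T^d}\big[(\eta^\tau*f)(x+\tau V(x))-f(x)\big]\,d\mu(x)=\int f\,dP\mu-\int f\,d\mu=0, \]
which is the displayed equality in (b), the representation $\tilde\mu^V=G_{V\#}\mu$ being built into the construction. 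For (a), take $f=u$ above and rewrite \eqref{eq:sol-control} as $\tau L(x,V(x))=(\eta^\tau*u)(x+\tau V(x))-u(x)-\tau\af_\tau$; integrating against $\mu$, the identity annihilates the first two terms and leaves $\int_{\T^d\times\R^d}L\,d\tilde\mu^V=\int_{\T^d}L(x,V(x))\,d\mu(x)=-\af_\tau$, as claimed.
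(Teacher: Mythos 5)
Your argument is correct, and it is worth noting that the paper itself gives no proof of this proposition: it is imported verbatim from \cite{DIPPS}. Your construction -- realizing $\tilde\mu^V$ as the graph-lift $G_{V\#}\mu$ of an invariant measure $\mu$ for the Markov kernel $p(dz\mid x)=\eta^\tau(x+\tau V(x)-z)\,dz$, so that the fixed-point identity $P\mu=\mu$ is literally the $\tau$--holonomy condition of Definition \ref{def-C_tau} -- is the natural one and is essentially the construction in the cited reference. The one place where you add genuine value is the existence step: since $V$ is only Borel, $P^*$ does not preserve $C(\T^d)$ and Krylov--Bogolyubov does not apply directly; your substitute -- the uniform density bound \eqref{ubm} on the range of $P$, weak $L^2$ compactness of the Ces\`aro averages, and the observation that $P^*g\in L^\infty\subset L^2$ is an admissible test function for weak $L^2$ convergence -- is a clean and correct fix, and it yields $\tilde\mu^V=G_{V\#}\mu$ for every Borel $V$, slightly more than part (b) claims. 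Your telescoping bound and the derivations of (a) and (b) from the invariance identity are correct. The only caveat, which you rightly flag, is membership in $\cP_\ell$: one needs $\int\|V\|\,d\mu<\infty$, which fails for a wildly non-integrable Borel $V$ (indeed $\mu$ has density bounded below by $\inf\eta^\tau>0$, so this forces $V\in L^1(dx)$); as stated for ``any Borel measurable map'' the proposition is therefore marginally too strong, but the condition holds automatically in both situations where the paper invokes it -- in (a) the optimal $V$ is bounded by fact (iii) of Subsection \ref{sec:statement-problem}, and in (b) a continuous $V$ on $\T^d$ is bounded.
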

The previous discussion motivates the $\tau$-discrete MFG model
\eqref{eq:disc-MFG}, for $(\rho,u,\tilde\mu)\in\R\times C(\T^d) \times\cC_\tau$,
\begin{equation*}
	\begin{cases}
		\tau\rho&=\cL_\tau u(x) -u(x)-\tau\bF(x,\mu) \\
		-\rho&=\displaystyle{\int_{\T^d\times\R^d} L(x,q)+\bF(x,\mu)\ d\tilde\mu(x,q)},
		\quad \mu={\Pr}_{1\#}\tilde\mu,
	\end{cases}
\end{equation*}
that was presented in Problem \ref{problem}. 

\subsection{Relation with other discrete MFG models}
\label{sec:comparison}

The work \cite{Saldi2019DiscretetimeAM} examined a model for discrete average cost MFG. 
The model is determined by a quadruple, $(X,A,p,c)$, where $X$ and $A$ stand for the state and control spaces, $p$ is a stochastic kernel, $p:X\times A\times\cP(X)\to\cP(X)$, that gives the transition probability law, and  $c:X\times A\times\cP(X)\to[0,\infty)$ is the one-stage cost. A policy is a stochastic kernel on $A$ given $X$, $\pi:X\to\cP(A)$.
Each policy, together with an initial distribution $\mu_0\in\cP(X)$ and
the transition probability $p$, defines a measure $P^\pi$ on $(X\times A)^\infty$.
We denote the expectation with respect to $P^\pi$ by $\E^\pi$.

Given a policy $\pi$, the corresponding average cost is 
\[ J_\mu(\pi)
  =\limsup_{n\to\infty}\frac 1n\E^\pi\left[\sum_{i=0}^{n-1}c(x_i,a_i,\mu)\right].\]
The average cost problem consists of finding $\pi^*$ that is optimal for
$\mu$; that is, such that
\[J_\mu(\pi^*)=\inf_\pi J_\mu(\pi).\]

For any set  $A$ let $2^{A}$ be the set of all subsets of $A$. 
Consider the map $\Psi:\cP(X)\to 2^\Pi$ given by 
\[\Psi(\mu)=\{\pi\in\Pi:\pi \hbox{ is optimal for }
\mu\}.\]
Furthermore, define the map $\Lam:\Pi\to 2^{\cP(X)}$ as follows: given
$\pi\in\Pi$, $\mu\in\Lam(\pi)$ if
\[\mu(B)=\int_{X\times A}p(B\mid x,a,\mu)\pi(da\mid x)\mu(dx).
\]

Under the assumption
discussed in \cite{Saldi2019DiscretetimeAM},
$\Lam(\pi)$ has a unique element for all $\pi$.
A pair $(\pi,\mu)\in\Pi\times\cP(X)$ is a {\em mean-field equilibrium} 
if $\pi\in\Psi(\mu)$ and $\mu\in\Lam(\pi)$.

For us, the state and control spaces are the torus $\T^d$ and $\R^d$.
The transition law $p:\T^d\times\R^d\to\cP(\T^d)$ is given by the dynamics 
\begin{equation*}
x_{i+1}=x_i+\tau q_i+\nu_i,  
\end{equation*}
where $(\nu_i)$ is a sequence of Gaussian random variables, i.i.d. with common law
\[\eta^\tau(s)=\frac{1}{(4\pi\tau)^{\frac d2}}e^{-|s|^2/4\tau}.\]
 Thus,
\[p(B\mid x,q)=\int_{\T^d}\chi_B(x+\tau q+s) \eta^\tau(s)ds
  =\int_{B}\eta^\tau(z-x-\tau q)dz.\]
The one-stage cost function is $c_\tau(x,q,\mu)=\tau (L(x,q)+F(\mu))$.

To obtain a mean-field equilibrium $(\pi,\mu)$ from a solution of
the MFG system \eqref{eq:disc-MFG}, we disintegrate $\tilde\mu$ as
$\tilde\mu(dx,dq)=\pi(dq\mid x)\mu(dx)$. This approach was used in \cite{Saldi2019DiscretetimeAM}. The setting in that reference is substantially different from ours: there, the control space $A$ is compact and the one-stage cost $c$ is bounded.
In particular, the results in \cite{Saldi2019DiscretetimeAM} do not imply our results
directly.

\section{Assumptions}
\label{sec:assumptions}
Here, we describe the main assumptions used in the sequel. 
\begin{enumerate}
\item The Lagrangian $L:\T^d\times\R^d\to\R$ is  smooth. Moreover, 
we will need some of the following assumptions for our results. 
\begin{enumerate} 
	\item[(A1)]
 There is $c_0>0$ such that $D^2_{qq}L\ge c_oI$.
 	\item[(A2)]   
There are $k_1, k_2$, such that for any $x, y, q, v\in\R^d$
\[L(x+h,q+v)-2L(x,q)+L(x-h,q-v)\le k_1|h|^2+k_2|v|^2.\]
	\item[(A3)]
The Lagrangian  $L$ is separable; that is, $L(x,v)=K(v)-U(x)$. Furthermore,  $K(v)\ge c|v|^q$, 
$q=2$ for $d=1$ and $q>d$ for $d>2$; $K(0)=0$.
\end{enumerate}
Assumption (A1) is used for the existence of solutions. Notice that Assumption (A1) implies that there are $c_1, c_2>0$ such that  for any $x\in\T^d$, $q\in\R^d$
\begin{equation}\label{eq:super}
	|q|^2\le c_1L(x,q)+c_2  
\end{equation}

For the convergence, in the nonlocal case, we work under Assumption (A2); for the local case, instead, we use (A3).  Observe that for $d=1$, a Lagrangian that satisfies Assumption
(A3) also satisfies Assumption (A2).

 \item The coupling $\bF:\T^d\times\cP(\T^d)\to\R$ is either nonlocal or local. 
For our various results, we will work with the following assumptions.
    \begin{enumerate}
    \item {\bf Nonlocal.} 
    We assume that
\begin{enumerate}
	\item[(B1)] $\bF$ is continuous.
	
		\item[(B2)]    $\bF$ stricly monotone; that is,
	
	\[\int_{\T^d}(\bF(x,m)- \bF(x,m'))d(m-m')(x)> 0\quad\hbox{for any }m\ne m'.\]
	
	\item[(B3)] There is $k_0>0$ such that for  any $x,h\in\R^d$, $m\in\cP(\T^d)$
	\[\bF(x+h,m) -2\bF(x,m)+\bF(x-h,m)\le k_0|h|^2.\]   

\end{enumerate}
The monotonicity assumption is used to establish uniqueness, as it is standard in MFG theory. The last assumption provides uniform semiconcavity bounds that are crucial for the convergence results.

 \item {\bf Local.} We assume $\bF(x,m)=F(m(x))$ with $F:\R^+\to\R$ smooth and, 
 we will work with the following additional properties. 
\begin{enumerate}
	\item[(B4)]                       
 $F$ concave and strictly increasing
	\item[(B5)] 
 $F$ bounded  below or $F(m)=\log m$.
  	\item[(B6)]  $F$ has the following form  $F(m)= m^\alpha$, $0<\alpha<1$
\end{enumerate}  
Assumptions (B6) is used in the analysis of the convergence. Of course, (B6) implies (B5)
since $m^\alpha$ is bounded by below. 
 \end{enumerate}
\end{enumerate}

In the sequel, we will use the following constants
  \begin{enumerate} 
  \item In the nonlocal case, $a_\bF=\min\bF$, $b_\bF=\max\bF$.
  \item In the local case,  $a_\bF=\min F$  when $F$ is bounded below,
    $a_\bF=0$ when $F(m)=\log m$, and $b_\bF=F(1)$.
  \end{enumerate}

\section{Solving the discrete MFG}
\label{sec:solv-discr-mfg}

Fix $\tilde\mu\in\cC_\tau$ and let $\mu={\Pr}_{1\#}\tilde\mu$. 
According to properties  (i)
and (ii) concerning \eqref{discreteHJ} that were 
discussed in subsection \ref{sec:statement-problem},
there is a unique  $\rho^\mu\in\R$
and $u_\mu\in C(\T^d)$, unique up to additive constants, solving the first
equation of \eqref{eq:disc-MFG}.
Thus, it follows from \cite{DIPPS} that the set
\begin{equation}\label{eq:min-map}
    \Psi_\tau(\tilde\mu):=  \Bigl\{\tilde\nu\in 
  \cC_\tau: \int_{\T^d\times\R^d} L(x,q)+\bF(x,\mu)\; d\tilde\nu(x,q) =-\rho^\mu\Bigr\}
  \end{equation}
  is non-empty. Furthermore, $\Psi_\tau(\tilde\mu)$ is also convex.

  \begin{lemma}\label{unique} Assume $L$ satisfies (A1) and $\bF$
    satisfies (B2) or (B4).
    Let
  $(\rho^1, u^1, \tilde\mu^1)$, $(\rho^2, u^2, \tilde\mu^2)$ solve the
  $\tau$--discrete MFG in \eqref{eq:disc-MFG}. Then $\Pr_{1\#}\tilde\mu^1=\Pr_{1\#}\tilde\mu^2$, $\rho^1=\rho^2$
  and $u^1-u^2$ is constant.
\end{lemma}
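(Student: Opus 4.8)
The plan is to adapt the Lasry--Lions uniqueness argument to the discrete setting, exploiting the variational (duality) structure of the first equation of \eqref{eq:disc-MFG} together with the monotonicity of $\bF$. Write $L^i(x,q):=L(x,q)+\bF(x,\mu^i)$ for $i=1,2$, where $\mu^i=\Pr_{1\#}\tilde\mu^i$. Since $\bF(\cdot,\mu^i)$ depends only on $x$, adding it to $L$ leaves the $q$--Hessian unchanged, so each $L^i$ is again a smooth Tonelli Lagrangian satisfying $(A1)$, and the whole Hamilton--Jacobi theory of Section \ref{backg} applies to it. The first equation of \eqref{eq:disc-MFG} says precisely that $u^i$ solves the discrete equation \eqref{discreteHJ} for $L^i$ with ergodic constant $\rho^i$. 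By property (i), $\rho^i$ is the unique such constant, and the duality \eqref{discrete-dual} applied to $L^i$ gives $-\rho^i=\min_{\tilde\nu\in\cC_\tau}\int L^i\,d\tilde\nu$. The second equation reads $-\rho^i=\int L^i\,d\tilde\mu^i$, so $\tilde\mu^i$ attains this minimum; that is, $\tilde\mu^i$ is optimal for $L^i$ over $\cC_\tau$. Note that $\cC_\tau$ is the same admissible set for both Lagrangians, which is what makes the competitor testing below legitimate.

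First I would write the two optimality inequalities obtained by testing each minimizer against the other admissible measure,
\[
\int L^1\,d\tilde\mu^1\le\int L^1\,d\tilde\mu^2,
\qquad
\int L^2\,d\tilde\mu^2\le\int L^2\,d\tilde\mu^1,
\]
and add them. The $\int L\,d\tilde\mu^i$ contributions cancel, leaving an inequality involving only the coupling terms. Because $\bF(\cdot,\mu^i)$ is a function of $x$ alone, each such integral against $\tilde\mu^j$ collapses to an integral against the projection $\mu^j$, and rearranging the cross-terms yields
\[
\int_{\T^d}\big(\bF(x,\mu^1)-\bF(x,\mu^2)\big)\,d(\mu^1-\mu^2)(x)\le 0.
\]

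I would then invoke monotonicity to reverse this. In the nonlocal case, $(B2)$ states exactly that the left-hand side is strictly positive whenever $\mu^1\ne\mu^2$, forcing $\mu^1=\mu^2$. In the local case, $\bF(x,\mu^i)=F(m^i(x))$ with $m^i=m_{\tilde\mu^i}$ the smooth density of the $\tau$--holonomic measure guaranteed after Definition \ref{def-C_tau}, and the displayed integral becomes $\int_{\T^d}\big(F(m^1)-F(m^2)\big)(m^1-m^2)\,dx$; since $(B4)$ makes $F$ strictly increasing, the integrand is pointwise nonnegative and vanishes only where $m^1=m^2$, so again $\mu^1=\mu^2$.

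Once $\mu^1=\mu^2=:\mu$, we have $\bF(\cdot,\mu^1)=\bF(\cdot,\mu^2)$, so $u^1$ and $u^2$ both solve \eqref{discreteHJ} for the single Lagrangian $L+\bF(\cdot,\mu)$. Uniqueness of the ergodic constant (property (i)) gives $\rho^1=\rho^2$, and uniqueness up to additive constants (property (ii)) gives that $u^1-u^2$ is constant, completing the proof. The one point requiring care is the reduction in the second paragraph: it is essential that $\bF(\cdot,\mu^i)$ carries no $q$--dependence, so that integration against $\tilde\mu^j$ reduces to integration against $\mu^j$ and the coupling cross-terms assemble exactly into the monotonicity pairing; the remainder is bookkeeping.
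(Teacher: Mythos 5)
Your proposal is correct and follows essentially the same route as the paper: test each measure's optimality against the other, add the two inequalities so the Lagrangian terms cancel, obtain $\int(\bF(x,\mu^1)-\bF(x,\mu^2))\,d(\mu^1-\mu^2)\le 0$, and conclude by strict monotonicity, with $\rho^1=\rho^2$ and $u^1-u^2$ constant then following from the uniqueness properties of the discrete Hamilton--Jacobi equation. Your explicit appeal to the duality \eqref{discrete-dual} for the frozen Lagrangians $L^i$ just makes precise the step the paper leaves implicit.
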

\begin{proof} Let $\mu^i=\Pr_{1\#}\tilde\mu^i$, $i=1,2$. Then,
  \begin{align*}
      \int_{\T^d\times\R^d} L(x,q)+\bF(x,\mu^1)\ d\tilde\mu^1(x,q)&=-\rho^1
    \le  \int_{\T^d\times\R^d} L(x,q)+\bF(x,\mu^1)\ d\tilde\mu^2(x,q),
      \end{align*}
  and
    \begin{align*}
    \int_{\T^d\times\R^d} L(x,q)+\bF(x,\mu^2)\ d\tilde\mu^2(x,q)&=-\rho^2
    \le \int_{\T^d\times\R^d} L(x,q)+\bF(x,\mu^2)\ d\tilde\mu^1(x,q).
  \end{align*}
Adding these inequalities, we have
\begin{multline*}
  \int L d\tilde\mu^1+\int L d\tilde\mu^2+\int \bF(x,\mu^1 )d\mu^1(x)+
  \int\bF(x,\mu^2)d\mu^2(x)\\\le \int Ld\tilde\mu^1+\int L d\tilde\mu^2+
  \int\bF(x,\mu^1 )d\mu^2(x)+\int \bF(x,\mu^2)d\mu^1(x).
  \end{multline*}
Therefore,
\[\int_{\T^d} (\bF(x,\mu^1)-\bF(x,\mu^2))d(\mu^1-\mu^2)(x)\le 0.\]
By the strict monotonicity of $\bF$, in the nonlocal case, or the strict monotonicity of $F$, in
the local case, if we had $\mu^1\ne\mu^2$, then we would have
\[\int_{\T^d} (\bF(x,\mu^1)-\bF(x,\mu^2))d(\mu^1-\mu^2)(x)> 0.\]
Thus, $\mu^1=\mu^2$, so $\rho^1=\rho^2$. Furthermore, by
  the fact (ii) about  equation \eqref{discreteHJ}, 
  we  have that $u^1-u^2$ is constant.
\end{proof}
 To solve the discrete MFG, we must show that the map
  $\Psi_\tau:\cC_\tau\to 2^{\cC_\tau}$ has a fixed point. This is achieved 
   using  Kakutani's fixed-point theorem.

  \begin{proposition}\label{bound} 
 	Suppose that $L$ satisfies (A1) and 
   that either $\bF$ is nonlocal   satisfying $(B1)$ or
   that $\bF$  is 
    local with $F$ satisfying $(B4)$ and $(B5)$. Moreover, if 
 $\bF$ is local and $F(m)=\log m$ suppose that  $0<\tau<2/c_1$.

  Then, there is a constant, $C_\tau>0$, such that for
\[A_\tau=\Big\{\tilde\mu\in\cC_\tau:\int_{\T^d\times\R^d} L\; d\tilde\mu\le C_\tau\Big\},\]
  $\bF|_{\T^d\times A_\tau}$ is bounded and
  $\Psi_\tau(A_\tau)\subset 2^{A_\tau}$.  
  \end{proposition}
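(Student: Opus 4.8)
The plan is to recognize each auxiliary value $\rho^\mu$ and each element of $\Psi_\tau(\tilde\mu)$ as objects of the discrete Hamilton--Jacobi theory recalled for \eqref{discreteHJ}, applied to the modified Lagrangian $L^\mu(x,q):=L(x,q)+\bF(x,\mu)$, where $\mu=\Pr_{1\#}\tilde\mu$. Since $\bF(\cdot,\mu)$ depends on $x$ alone and is smooth, $L^\mu$ is again a Tonelli Lagrangian and $\cL_\tau^\mu u=\cL_\tau u-\tau\bF(\cdot,\mu)$; hence the first line of \eqref{eq:disc-MFG} is precisely \eqref{discreteHJ} for $L^\mu$ with ergodic constant $\rho^\mu$, and by \eqref{discrete-dual} and \eqref{eq:min-map} the set $\Psi_\tau(\tilde\mu)$ consists exactly of the minimizers of $\tilde\nu\mapsto\int_{\T^d\times\R^d}L^\mu\,d\tilde\nu$ over $\cC_\tau$, the minimum being $-\rho^\mu$. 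Applying to $L^\mu$ the two-sided bound on the ergodic constant recalled as property (i) for \eqref{discreteHJ}, I obtain the upper bound $-\rho^\mu\le\max_x\min_q L^\mu(x,q)=\max_x\big(\min_q L(x,q)+\bF(x,\mu)\big)$.

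First I would prove the boundedness of $\bF$, which I claim holds uniformly over all of $\cC_\tau$, not merely over $A_\tau$. In the nonlocal case this is immediate: by $(B1)$, $\bF$ is continuous on the compact set $\T^d\times\cP(\T^d)$, so $a_\bF\le\bF\le b_\bF$. In the local case the crux is a two-sided bound on the projected density. The upper bound $m_{\tilde\mu}\le(4\pi\tau)^{-d/2}=:M_\tau$ is \eqref{ubm}. For the lower bound, the periodic heat kernel $\eta^\tau$ is continuous and strictly positive on the compact torus, so $\kappa_\tau:=\min_s\eta^\tau(s)>0$; since $\tilde\mu$ is a probability measure,
\[m_{\tilde\mu}(z)=\int_{\T^d\times\R^d}\eta^\tau(x+\tau q-z)\,d\tilde\mu(x,q)\ge\kappa_\tau.\]
As $F$ is increasing by $(B4)$, this yields $F(\kappa_\tau)\le\bF(x,\mu)=F(m_{\tilde\mu}(x))\le F(M_\tau)$, and both ends are finite --- even for $F=\log$, precisely because $\kappa_\tau>0$. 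In every case, then, there are finite constants $a_\tau\le\bF\le b_\tau$ on $\T^d\times\cC_\tau$, which is the first claim.

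For the self-map property, fix $\tilde\mu\in A_\tau$, take any $\tilde\nu\in\Psi_\tau(\tilde\mu)$, and set $\nu=\Pr_{1\#}\tilde\nu$. Subtracting the coupling from the minimal value gives
\[\int_{\T^d\times\R^d}L\,d\tilde\nu=-\rho^\mu-\int_{\T^d}\bF(x,\mu)\,d\nu(x).\]
Bounding $-\rho^\mu\le\max_x\min_q L+b_\tau$ by the inequality of the first paragraph, and $-\int\bF\,d\nu\le-a_\tau$ by the uniform lower bound on $\bF$, I obtain $\int L\,d\tilde\nu\le\max_x\min_q L+b_\tau-a_\tau=:C_\tau$, a finite constant depending only on $\tau$ and the data. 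With this $C_\tau$ the set $A_\tau$ is nonempty, since the minimizer of $\int L$ over $\cC_\tau$ has value $-\af_\tau\le\max_x\min_q L\le C_\tau$, and the estimate just obtained shows $\tilde\nu\in A_\tau$; hence $\Psi_\tau(A_\tau)\subset 2^{A_\tau}$.

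The only genuinely delicate point is the local logarithmic coupling $F=\log$, which is unbounded below: there the entire argument rests on the strictly positive lower bound $\kappa_\tau$ for the smoothed density, which is exactly what the heat-kernel regularization buys. This is where I expect the hypothesis $0<\tau<2/c_1$ to be used, together with the superlinear bound \eqref{eq:super}: it is the technical smallness keeping the logarithmic-case quantities --- in particular the partition-type integral underlying the identity \eqref{eq:nogao} --- well posed, so that the chain of inequalities above is legitimate. All remaining ingredients are robust: convexity of $A_\tau$ follows from the linearity of $\tilde\mu\mapsto\int L\,d\tilde\mu$ and the convexity of $\cC_\tau$, its compactness is Remark \ref{Ma}, and the constants $a_\tau,b_\tau$ collapse to $a_\bF,b_\bF$ in the bounded cases.
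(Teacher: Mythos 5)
Your proof is correct, and for the nonlocal and bounded-below local cases it follows essentially the paper's route: bound $-\rho^\mu$ from above (the paper tests the minimization defining $\Psi_\tau(\tilde\mu)$ against the specific measure \eqref{eq:example} concentrated at $q=0$; you invoke the two-sided bound on the ergodic constant for the shifted Lagrangian $L+\bF(\cdot,\mu)$ --- both rest on the same identification of $\Psi_\tau(\tilde\mu)$ with the minimizers of the dual problem), then subtract the uniform lower bound on $\bF$. The genuine divergence is the logarithmic case. The paper does \emph{not} use a uniform positive lower bound on the projected density: it applies Jensen's inequality to the exponential inside the heat kernel, which yields the $\tilde\mu$-dependent estimate $\log\mu \ge -\tfrac{\tau}{2}\bigl(c_1\int L\,d\tilde\mu+c_2\bigr)+\log B_\tau$ via \eqref{eq:super}, and then closes the loop by a self-consistency argument on $A_\tau$; this is precisely where $\tau<2/c_1$ enters, so that the resulting equation for $C_\tau$ has a positive solution. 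You instead observe that the \emph{periodic} heat kernel is continuous and strictly positive on the compact torus, hence bounded below by some $\kappa_\tau>0$ (indeed $\kappa_\tau\ge(4\pi\tau)^{-d/2}e^{-d/(16\tau)}$, keeping only the $k=0$ term of the series on a fundamental domain), so that $m_{\tilde\mu}\ge\kappa_\tau$ uniformly over $\cC_\tau$ and $F=\log$ is bounded below by $\log\kappa_\tau$ on the relevant range. This is correct, it reduces the logarithmic case to the bounded case, and it makes the hypothesis $\tau<2/c_1$ superfluous for this proposition; your closing speculation that this hypothesis is needed for the partition-function identity \eqref{eq:nogao} is off the mark (that identity belongs to Theorem \ref{goal1}, and in the paper the hypothesis is consumed entirely inside this proof), but an unused hypothesis is not a flaw. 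Both constants degenerate comparably as $\tau\to 0$ ($\log\kappa_\tau\sim-d/(16\tau)$ versus $\log B_\tau\sim-1/(2\tau)$), so nothing is lost quantitatively, and your argument yields the slightly stronger conclusions that $\bF$ is bounded on all of $\T^d\times\cC_\tau$ and that $\Psi_\tau(\cC_\tau)\subset 2^{A_\tau}$.
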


  \begin{proof}
    Consider the problem in \eqref{discrete-dual}.
  In the nonlocal case, $(B1)$ and the compactness of
       $\T^d\times\cP(\T^d)$ imply that $\bF$ is bounded.
       In the local case, from $(B4)$
       and the upper bound \eqref{ubm}, we have $ F(\mu)\le
       F((4\pi\tau)^{-d/2})  $, for $\mu=\Pr_{1\#}\tilde\mu $, $\tilde\mu\in\cC_\tau$.

We examine first the following case: $\bF$ is nonlocal   satisfying $(B1)$ or
$\bF$  is local with $F$ satisfying $(B4)$ and bounded by
below. We address the logarithmic case separately.

By the definition of $\Psi_\tau(\tilde\mu)$ in \eqref{eq:min-map}, and applying Proposition  \ref{prop tau-holonomic} using  the measure defined in \eqref{eq:example}, 
we get
\begin{equation}\label{pro2-rho-bound}
-\rho^\mu\le \int_{\T^d} L(x,0)+\bF(x,\mu)\; dx \le
\max_{x\in\T^d}L(x,0)+b_\bF,
\end{equation}
  	where, in the local case, we use the concavity of $\bF$. Hence, in the case $(1)$ for $\tilde\nu\in\Psi_\tau(\tilde\mu)$, we have 
    \[\int_{\T^d\times\R^d} L\; d\tilde\nu+a_\bF\le -\rho^\mu\le \max_{x\in\T^d}
    \left[L(x,0)+b_{\bF}-\bF\right].\]
    Thus, it suffices to take
    $C_\tau=\max\limits_{x\in\T^d}L(x,0)+b_\bF-a_\bF$.

    Now, we consider the case where $F(m)=\log m$. From  the convexity of the exponential
    and the fact that $\mu \in \cC_\tau$, we have
    \begin{align*}
    \mu(z)&=(4\pi\tau)^{-\frac d2}
\sum_{k\in\Z^d }\int_{\T^d\times\R^d} \exp\left( -\frac{|x+\tau q-z+k|^2}{4\tau}\right) \ d\tilde\mu(x,q)\\
&\ge(4\pi\tau)^{-\frac d2}\sum_{k\in\Z^d}\exp\left( \int_{\T^d\times\R^d}-\frac{|x+\tau q-z+k|^2}{4\tau}\ d\tilde\mu(x,q)\right) \\
   &\ge(4\pi\tau)^{-\frac d2}\sum_{k\in\Z^d}
 \exp\left( \int_{\T^d\times\R^d}-\frac{(|k|+1)^2}{2\tau}-\frac\tau 2|q|^2\ d\tilde\mu(x,q)\right) \\
&= \exp\left(-\int_{\T^d\times\R^d}\frac\tau 2|q|^2\ d\tilde\mu(x,q)\right)B_\tau, 
\end{align*}
where
\begin{equation*}
B_\tau:=(4\pi\tau)^{-\frac d2}\sum\limits_{k\in\Z^d}e^{-\frac{(|k|+1)^2}{2\tau}}. 
\end{equation*}
From  \eqref{eq:super}, we have
    \begin{align} 
 \log\mu(z)
\ge&-\frac \tau 2
    \left(\int_{\T^d\times\R^d} c_1 L\; d\tilde\mu+ c_2\right) +\log B_\tau.
\label{Fbound}
    \end{align}
Therefore, by \eqref{pro2-rho-bound} for $\tilde\nu\in\Psi_\tau(\tilde\mu)$, we get
\begin{align*}
\int_{\T^d\times\R^d} L\; d\tilde\nu+\log B_\tau-\frac \tau 2
  \left(\int_{\T^d\times\R^d} c_1 L\; d\tilde\mu+ c_2\right)\le -\rho^\mu
  &\le \max_{x\in\T^d}L(x,0).
\end{align*}
Consequently,
\[\int_{\T^d\times\R^d} L\; d\tilde\nu\le \max_{x\in\T^d}L(x,0)-\log B_\tau
  +\frac\tau 2\left(c_1\int_{\T^d\times\R^d} L\; d\tilde\mu+c_2\right).\]
  For $\tau<2/c_1$, let
  \[C_\tau=(\max_{x\in\T^d}L(x,0)-\log B_\tau+\frac\tau 2c_2)/(1-\frac
    \tau 2 c_1).\]
It follows from \eqref{Fbound} that $\bF|_{\T^d\times A_\tau}$ is bounded from below.
  If  $\int\limits_{\T^d\times\R^d}L\; d\tilde\mu\le C_\tau$, then
  \[\int_{\T^d\times\R^d} L\; d\tilde\nu\le C_\tau(1-\frac\tau 2 c_1)
    +\frac\tau 2 c_1 C_\tau=C_\tau\]
  and so $\Psi_\tau(A_\tau)\subset 2^{A_\tau}$.
\end{proof}
\begin{lemma}\label{fix-point} Assume the hypothesis of Proposition
  \ref{bound}. Then,  the map $\Psi_\tau$ has a fixed point.
\end{lemma}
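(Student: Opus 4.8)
The plan is to invoke Kakutani's fixed-point theorem for the set-valued map $\Psi_\tau$ restricted to the compact convex set $A_\tau$ produced in Proposition \ref{bound}. Kakutani's theorem requires four ingredients: (i) the domain $A_\tau$ is a nonempty compact convex subset of a locally convex topological vector space; (ii) the map $\Psi_\tau$ sends each point of $A_\tau$ into $2^{A_\tau}$; (iii) the images $\Psi_\tau(\tilde\mu)$ are nonempty and convex; and (iv) the graph of $\Psi_\tau$ is closed (equivalently, $\Psi_\tau$ is upper semicontinuous with closed values). Most of these are already in hand, so the proof is mainly a matter of assembling and verifying the remaining topological point.

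First I would fix the ambient space. Recall that $\cP_\ell$ embeds in the dual $(C^0_\ell)'$ with its metrizable weak$^*$ topology, so $A_\tau$ lives in a locally convex space. The set $A_\tau=\{\tilde\mu\in\cC_\tau:\int L\,d\tilde\mu\le C_\tau\}$ is convex as the intersection of the convex set $\cC_\tau$ with a sublevel set of the linear functional $\tilde\mu\mapsto\int L\,d\tilde\mu$; it is nonempty because the measure $\tilde\nu$ of \eqref{eq:example} is $\tau$-holonomic and, enlarging $C_\tau$ if necessary, lies in $A_\tau$. For compactness I would appeal to Remark \ref{Ma}: the set $A(C_\tau)$ is compact in $\cP_\ell$, and $\cC_\tau$ is closed in $\cP_\ell$ (it is cut out by the weak$^*$-closed holonomy constraints of Definition \ref{def-C_tau}), so $A_\tau=A(C_\tau)\cap\cC_\tau$ is compact. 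Properties (ii) and (iii) are exactly the content of Proposition \ref{bound} (giving $\Psi_\tau(A_\tau)\subset 2^{A_\tau}$) together with the nonemptiness and convexity of $\Psi_\tau(\tilde\mu)$ noted just after \eqref{eq:min-map}.

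The substantive remaining step, and the one I expect to be the main obstacle, is verifying that $\Psi_\tau$ has a closed graph. So I would take sequences $\tilde\mu_n\to\tilde\mu$ and $\tilde\nu_n\in\Psi_\tau(\tilde\mu_n)$ with $\tilde\nu_n\to\tilde\nu$ in $A_\tau$, and show $\tilde\nu\in\Psi_\tau(\tilde\mu)$. The delicate link is the dependence $\mu_n\mapsto\rho^{\mu_n}$: since $\mu_n=\Pr_{1\#}\tilde\mu_n$ converges and the coupling $\bF(\cdot,\mu)$ depends continuously on $\mu$ (by $(B1)$ in the nonlocal case, or through the smooth density $m_{\tilde\mu}$ of Definition \ref{def-C_tau} in the local case), I would argue that $\bF(x,\mu_n)\to\bF(x,\mu)$ uniformly in $x$, hence by the variational characterization \eqref{eq:constant}–\eqref{discrete-dual} of the ergodic constant the values $\rho^{\mu_n}$ converge to $\rho^\mu$. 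The defining condition $\int L+\bF(x,\mu_n)\,d\tilde\nu_n=-\rho^{\mu_n}$ then passes to the limit: the term $\int\bF(x,\mu_n)\,d\tilde\nu_n$ converges by the uniform convergence of $\bF(\cdot,\mu_n)$ combined with weak convergence $\tilde\nu_n\to\tilde\nu$ (using that $\bF$ is bounded on $A_\tau$), while $\int L\,d\tilde\nu_n\to\int L\,d\tilde\nu$ requires care because $L$ has only linear-in-$q$ growth control and is not in $C^0_\ell$.

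The one point needing genuine attention is precisely this lower-semicontinuity/continuity of $\tilde\nu\mapsto\int L\,d\tilde\nu$ along the convergent sequence. Since $L$ is bounded below and coercive but grows superlinearly, I would handle it by the standard truncation argument: for the closedness it suffices to have $\int L\,d\tilde\nu\le\liminf\int L\,d\tilde\nu_n$ (lower semicontinuity under $\cP_\ell$-convergence, which follows by approximating $L$ from below by bounded continuous functions and using monotone convergence), together with the reverse inequality obtained from the minimality defining $\Psi_\tau$ and the convergence of $\rho^{\mu_n}$. Combining these gives equality and hence $\tilde\nu\in\Psi_\tau(\tilde\mu)$, establishing the closed graph. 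With all four hypotheses of Kakutani verified, the theorem yields a fixed point $\tilde\mu_\tau\in\Psi_\tau(\tilde\mu_\tau)$, which is the desired solution component of the $\tau$-discrete MFG system.
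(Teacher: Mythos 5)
Your proof is correct and follows the same overall strategy as the paper: apply Kakutani's theorem on the compact convex set $A_\tau$ from Proposition \ref{bound}, with the closed-graph property as the only substantive point. The one place you genuinely diverge is in how the graph is closed. The paper exploits the characterization of $\Psi_\tau(\tilde\mu)$ as a set of minimizers by testing $\tilde\nu_n\in\Psi_\tau(\tilde\mu_n)$ against an arbitrary fixed $\tilde\eta\in\cC_\tau$,
\[
\int_{\T^d\times\R^d} L+\bF(x,\mu_n)\,d\tilde\nu_n\le\int_{\T^d\times\R^d} L+\bF(x,\mu_n)\,d\tilde\eta,
\]
and passes to the limit using only lower semicontinuity of $\tilde\nu\mapsto\int L\,d\tilde\nu$ on the left and dominated convergence (boundedness of $\bF$ on $\T^d\times A_\tau$ together with $0\le\nu_n\le(4\pi\tau)^{-d/2}$) for the coupling terms; this never requires knowing that $\rho^{\mu_n}\to\rho^{\mu}$ or that $\int L\,d\tilde\nu_n$ actually converges. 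You instead route through the continuity of $\mu\mapsto\rho^\mu$ and a two-sided squeeze, which also works: taking $\liminf$ in the defining equalities gives $\int L+\bF(x,\mu)\,d\tilde\nu\le-\rho^\mu$, and the reverse inequality is automatic because $\cC_\tau$ is closed, so $\tilde\nu\in\cC_\tau$ and $-\rho^\mu$ is the minimum. Your version costs two extra verifications that the paper avoids: the estimate $|\rho^{\mu_n}-\rho^\mu|\le\|\bF(\cdot,\mu_n)-\bF(\cdot,\mu)\|_\infty$ (easy from \eqref{discrete-dual}), and, in the local case, the claimed \emph{uniform} convergence $F(\mu_n)\to F(\mu)$ --- what comes for free from $\tilde\mu_n\to\tilde\mu$ in $\cP_\ell$ is only pointwise convergence of the densities, and upgrading it needs the equicontinuity of $z\mapsto m_{\tilde\mu_n}(z)$ (convolutions against the fixed kernel $\eta^\tau$) plus, for $F=\log$, the lower bound \eqref{Fbound}. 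The paper's dominated-convergence route sidesteps both, so your argument is sound but slightly less economical.
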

\begin{proof}
  Let $C_\tau>0$ and $A_\tau\subset\cC_\tau$ be given by Proposition
  \ref{bound}. Consider the map
  $\Psi_\tau:A_\tau\to 2^{A_\tau}$.
The set  $A_\tau$ is convex. Moreover, according to Remark \ref{Ma}, $A_\tau$ is also compact. Thus, to apply Kakutani's theorem, it suffices to check that $\Psi_\tau$ has a closed graph.
  For this, let $\tilde\mu_n, \tilde\nu_n$ be sequences in $A_\tau$
  converging to $\tilde\mu$ and $\tilde\nu$
  respectively such that $\tilde\nu_n\in\Psi(\tilde\mu_n)$.
  Hence, for any $\tilde\eta\in\cC_\tau$, we have
  \[\int_{\T^d\times\R^d} L(x,q)+\bF(x, \mu_n)\ d\tilde\nu_n(x,q)
    \le\int_{\T^d\times\R^d} L(x,q)+\bF(x, \mu_n)\ d\tilde\eta(x,q).\]
    Note that the sequences of smooth functions $\mu_n=\Pr_{1\#}\tilde\mu_n$,
    $\nu_n=\Pr_{1\#}\tilde\nu_n$ converge pointwise to $\mu=\Pr_{1\#}\tilde\mu$ and
    $\nu=\Pr_{1\#}\tilde\nu$, respectively.

  Since $\bF|_{\T^d\times A_\tau}$ is bounded and
    $0 \le\nu_n\le(4\pi\tau)^{-\frac d2}$,  by dominated convergence,
    \[ \lim \int _{\T^d}\bF(x,\mu_n) d\nu_n(x)=\int _{\T^d}\bF(x,\mu) d\nu(x),\quad
    \lim\int_{\T^d}\bF(x,\mu_n) d\eta=
    \int_{\T^d}\bF(x,\mu) d\eta.\]
 Therefore, recalling that $L$ is bounded from below, we deduce
  \[   \int_{\T^d\times\R^d} L+\bF(x,\mu)\ d\tilde\nu
    \le\liminf\int_{\T^d\times\R^d} L+\bF(x, \mu_n)\ d\tilde\nu_n
    \le\int_{\T^d\times\R^d} L+\bF(x,\mu)\ d\tilde\eta,\]
showing that $\tilde\nu\in\tilde\Psi_\tau(\mu)$. Thus, $\Psi_\tau$ has a closed graph. Hence, by 
 Kakutani fixed-point Theorem, it follows  that there is
$\tilde\mu\in A_\tau$ such that $\tilde\mu\in\Psi(\tilde\mu)$.
\end{proof}
Now, we are ready to prove Theorem \ref{goal1}.

\begin{proof} [Proof of Theorem~\ref{goal1}]  The existence and uniqueness of solutions to the $\tau$--discrete MFG, \eqref{eq:disc-MFG},  follows  from Lemmas
	\ref{unique} and \ref{fix-point}. To complete the proof, it remains to prove \eqref{eq:nogao} for the local case when $F(m)=\log(m)$.
First,  we claim that for any $\tilde{\mu}\in\cC_\tau$ with $\mu={\Pr}_{1\#}\tilde\mu$  and $\phi\in C(\T^d)$, the following inequality holds
\begin{equation}
\label{upbnd}
 \int_{\T^d\times\R^d} L+\log(\mu)\ d\tilde\mu \ge  -\log \int_{\T^d} e^{(\cL_\tau\phi(x)-\phi(x))/\tau}dx.
 \end{equation}
To verify \eqref{upbnd}, let $\tilde\mu\in\cC_\tau$ with $\mu={\Pr}_{1\#}\tilde\mu$ and $\phi\in C(\T^d)$. We have
\begin{align*}
  \int L(x,v) d\tilde\mu &=
\int \left( L(x,q) -\frac{\phi(x) - \eta^\tau*\phi(x+\tau q)}{\tau}\right) d\mu\\ 
&\ge-\int\frac{\phi(x) - \cL_\tau\phi(x)}{\tau}d\mu.
\end{align*}
Define
\[
a(m,\phi)=\int \left(\frac{\cL_\tau\phi(x)-\phi(x)}{\tau}-\log (m)\right)\ dm,
\]
for any absolutely continuous Borel probability measure $m$ in $\T^d$.
Then
\[\int_{\T^d\times\R^d} L+\log(\mu)\ d\tilde\mu \ge -a(\mu,\phi).\] 

Let
\[\lambda_\phi=\log \int e ^{(\cL_\tau\phi(x)-\phi(x))/\tau}dx, \]
and
\begin{equation}
\label{mphi}
m_\phi(x)=e^{(\cL_\tau\phi(x)-\phi(x))/\tau-\lambda_\phi}.
\end{equation}
Note that $a(m_\phi,\phi)=\lam_\phi$. Recall that 
 $t-t\log t\le 1$ for all $t>0$. 
 Therefore, 
  for any probability measure $m$ in $\T^d$, absolutely continuous with respect to
the Lebesgue, we have 
\begin{equation*}
m\log(m)-m_{\phi}\log(m_{\phi})\geq (1+\log(m_{\phi}))(m-m_{\phi}).
\end{equation*}
Consequently, we obtain
\[a(m,\phi)\le a(m_\phi,\phi)\\
+\int\Bigl(\frac{\cL_\tau\phi(x)-\phi(x)}{\tau}-\log (m_\phi)-1)(m(x)-m_\phi(x)\Bigr) dx.
\]
Recalling that  $m$ and $m_\phi$ are
probability measures  \eqref{mphi}  implies that the second term on the right-hand side vanishes. Thus,
\[\lam_\phi=\sup_m a(m,\phi).\]
This implies \eqref{upbnd}. Therefore,
\begin{equation}
  \label{eq:desigual} 
 \inf_{\mu\in\cC_\tau}\int\limits_{\T^d\times\R^d} L+\log(\mu)\ d\tilde\mu\ge
-\log\inf_{\phi\in C(\T^d)} \int_{\T^d} e^{(\cL_\tau\phi(x)-\phi(x))/\tau}dx.
\end{equation}

  From the first equation in \eqref{eq:disc-MFG}, for  $\mu_\tau={\Pr}_{1\#}\tilde\mu_\tau$, we have
\[ \exp\Bigl(\frac{\cL_\tau u_\tau(x)-u_\tau(x)}{\tau}\Bigr)=e^{\rho_\tau}\mu_\tau.\]
Thus, from \eqref{eq:desigual}, we get
\begin{align*}
\inf_{\mu\in\cC_\tau}\int\limits_{\T^d\times\R^d} L+\log(\mu)\ d\tilde\mu&\ge
-\log\inf_{\phi\in C(\T^d)} \int_{\T^d}e^{(\cL_\tau\phi(x)-\phi(x))/\tau}\ge
-\log \int_{\T^d} e^{(\cL_\tau u_\tau(x)-u_\tau(x))/\tau}\\ &
=-\log\int_{\T^d}e^{\rho_\tau} d\mu_\tau=-\rho_\tau=
\int\limits_{\T^d\times\R^d} L+\log(\mu_\tau)\ d\tilde\mu_\tau   \qedhere
\end{align*}
\end{proof}
\begin{remark}\label{support}
Let  $(\rho, u, \tilde\mu)$ with $\mu=\pi_\#\tilde\mu$ solve the $\tau$-discrete MFG, \eqref{eq:disc-MFG}, and   let
  $\V_u(x):=\arg\max[(\eta^\tau * u)(x+\tau q) -\tau L(x,q)]$. Since
\[ 0=\int (\eta^\tau * u)(x+\tau q)-u(x)\ d\tilde\mu\le
  \int\tau (L(x,q)+F(\mu(x))+\rho)\ d\tilde\mu=0, \]
we have that $\tilde\mu$ is supported in the set $\{(x,q):q\in\V_u(x) \}$.  
\end{remark}
  \begin{remark}\label{rel-comp}
For a solution $(\rho_\tau, u_\tau, \tilde\mu_\tau)$ of the
$\tau$--discrete MFG, \eqref{eq:disc-MFG},  we have 
\begin{equation}
  \label{eq:const-bound}
  \min_{\T^d\times\R^d} L+a_\bF\le -\rho_\tau\le \max_{x\in\T^d}L(x,0)+b_\bF
\end{equation}
  and
  \begin{equation*}
  \min_{\T^d\times\R^d} L\le \int\limits_{\T^d\times\R^d} L\ d\tilde\mu\le \max_{x\in\T^d}L(x,0)+b_\bF-a_\bF .   
  \end{equation*}

  Thus, by Remark \ref{Ma}, we have that
  \[\{(\rho_\tau, \tilde\mu_\tau): (\rho_\tau, u_\tau,
    \tilde\mu_\tau)\hbox{ solves \eqref{eq:disc-MFG}}\} \]
is precompact in $\R\times\cP_\ell$.
\end{remark}

\section{Convergence}
\label{sec:aprox}
Now, we study the convergence of 
solutions of the discrete MFG problem, \eqref{eq:disc-MFG} to 
solutions to the MFG system, \eqref{eq:estacionario}. We begin with proving some preliminary results, which are needed to prove Theorems \ref{nonlocal} and \ref{local}. We treat  nonlocal and local cases separately. First, in Subsection \ref{sec:nonlocal}, we examine the nonlocal case. We begin by proving the uniformly semi-convexity of solutions of the discrete  problem.  This estimate leads to the proof of Theorem \ref{nonlocal}. In Subsection \ref{bound.loc}, 
we address the local case. There, 
using the hypercontractivity of the Hamilton-Jacobi equation, we establish uniform bounds for the solutions of the discrete MFG problem in the local case. Finally, relying on these bounds and using Minty's method, in Subsection \ref{sec:weak-approach}, we obtain Theorem \ref{local}.

\subsection{Preliminary Computations}
\label{sec:pre}
In this section, we prove several results, which are needed to get convergence of the solutions of discrete MFG \eqref{eq:disc-MFG} to solutions of  the ergodic  problem \eqref{eq:estacionario}.

We begin by recalling the following Lemma from \cite{DIIS}.
\begin{lemma}\label{discrete-continuous}\cite{DIIS}
	Let $\fui\in C^{2}(\T^d)$. For every $R>0$, there exists a non-decreasing
	continuous function $\om:[0,+\infty)\to [0,+\infty)$ vanishing at $0$, only depending on $R$ and $\fui$, such that 
	\begin{equation*}
	\left|\frac{(\eta^\tau*\fui)(x+\tau q) - \fui(x)}{\tau}-\langle D\fui(x),q\rangle -\Delta\fui(x)\right|
\leq \om(\tau),
	\end{equation*}
	for all $(x,q)\in \T^d\times B_R$ and $\tau>0$.
\end{lemma}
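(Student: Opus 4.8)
The plan is to exploit the fact that $\eta^\tau$ is the heat kernel at time $\tau$ for the Laplacian $\De$, so that $\eta^\tau*\fui$ is $\fui$ advanced by one step of the heat semigroup; concretely, I will use only the Gaussian moment identities $\int_{\R^d}\eta^\tau(s)\,ds=1$, $\int_{\R^d} s_i\,\eta^\tau(s)\,ds=0$, and $\int_{\R^d} s_is_j\,\eta^\tau(s)\,ds=2\tau\delta_{ij}$. Since $\fui$ is $\Z^d$-periodic, the convolution against the periodized kernel from the definition of $\eta^\tau$ equals the full-space Gaussian convolution of the periodic extension of $\fui$, so these identities apply verbatim.

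First I would Taylor expand $\fui$ to second order: for $y\in\T^d$ and $s\in\R^d$, writing $r(y,s):=\fui(y+s)-\fui(y)-\langle D\fui(y),s\rangle-\tfrac12\langle D^2\fui(y)s,s\rangle$, the integral form of the remainder gives $|r(y,s)|\le\tfrac12|s|^2\,\om_2(|s|)$, where $\om_2$ is the modulus of continuity of $D^2\fui$ on the compact torus. Integrating against $\eta^\tau$ and using the moment identities yields $(\eta^\tau*\fui)(y)=\fui(y)+\tau\De\fui(y)+E(y,\tau)$, with $E(y,\tau)=\int_{\R^d}\eta^\tau(s)\,r(y,s)\,ds$.

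Substituting $y=x+\tau q$ and dividing by $\tau$, I would split the quantity to be estimated into three pieces, each to be bounded uniformly over $(x,q)\in\T^d\times B_R$: (a) $\tfrac{\fui(x+\tau q)-\fui(x)}{\tau}-\langle D\fui(x),q\rangle$, controlled by $\tfrac12\|D^2\fui\|_\infty R^2\tau$ via a first-order Taylor estimate; (b) $\De\fui(x+\tau q)-\De\fui(x)$, controlled by $\om_{\De\fui}(\tau R)$ from uniform continuity of $\De\fui$; and (c) $\tfrac{E(x+\tau q,\tau)}{\tau}$. Pieces (a) and (b) plainly vanish as $\tau\to0^+$. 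The \emph{main obstacle} is (c), i.e.\ showing $E(y,\tau)=o(\tau)$ uniformly in $y$. For this I would split the integral defining $E$ at a radius $\delta$: on $\{|s|\le\delta\}$ I bound $\om_2(|s|)\le\om_2(\delta)$ and use $\int\eta^\tau(s)|s|^2\,ds=2d\tau$, giving a contribution $\le d\,\om_2(\delta)\,\tau$; on $\{|s|>\delta\}$ I use $\om_2(|s|)\le2\|D^2\fui\|_\infty$ together with the Gaussian tail bound $\int_{|s|>\delta}\eta^\tau(s)|s|^2\,ds=o(\tau)$ for fixed $\delta$ (under the scaling $s=\sqrt{2\tau}\,w$ this tail equals $2\tau$ times the mass of $|w|^2e^{-|w|^2/2}$ outside $\{|w|>\delta/\sqrt{2\tau}\}$, which vanishes as $\tau\to0^+$). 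Hence, given $\ep>0$, I first fix $\delta$ so that $d\,\om_2(\delta)<\ep/2$, then choose $\tau$ small enough that the tail term divided by $\tau$ is $<\ep/2$, making $\tfrac{|E|}{\tau}<\ep$ uniformly in $y$.

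Finally, letting $g(\tau)$ denote the supremum of the absolute value of the target expression over $(x,q)\in\T^d\times B_R$, the estimates above show $g(\tau)\to0$ as $\tau\to0^+$. Setting $\om_0(\tau):=\sup_{0<s\le\tau}g(s)$ gives a non-decreasing function vanishing at $0$, and a standard upper-envelope construction (e.g.\ a continuous non-decreasing majorant of $\om_0$) produces the required continuous modulus $\om$, which by construction depends only on $R$ and $\fui$. This completes the plan.
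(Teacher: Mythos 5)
The paper does not prove this lemma; it is quoted verbatim from \cite{DIIS}, so there is no in-paper argument to compare against. Your proof is correct and is the standard route one would expect: the reduction of the periodized convolution to the full-space Gaussian convolution of the periodic extension is exactly the identity the paper records for $\eta^\tau*u$, the moment identities $\int s_i s_j\,\eta^\tau(s)\,ds=2\tau\delta_{ij}$ match the normalization $e^{-|s|^2/4\tau}$ of the kernel (so that $\eta^\tau*\fui=\fui+\tau\Delta\fui+o(\tau)$ with no factor $1/2$), the three-way split (a)--(c) accounts for the full expression, and the $\delta$-splitting plus Gaussian tail bound correctly handles the only delicate term, using that $D^2\fui$ is uniformly continuous on the compact torus. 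The final upper-envelope construction of a continuous non-decreasing $\om$ is standard and legitimate since the supremum $g(\tau)$ is finite for every $\tau>0$ and tends to $0$ as $\tau\to 0^+$.
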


\begin{proposition}\label{implicit}
  Assume that $L$ satisfies $(A1)$. Let $\fui\in C^2(\T^d)$. Then, there exist $t_\fui>0$ and $V_\fui\in C_1^2([0,t_\fui)\times\T^d,\R^d)$ such that $V_\fui(x,0)=D_pH(x,D\fui(x))$ and
  \[\cL_\tau\fui(x)=(\eta^\tau*\fui)(x+\tau
	V_\fui(\tau,x))-\tau L(x,V_\fui(\tau,x)) \hbox{ for } \tau<t_\fui, x\in\T^d.\]
\end{proposition}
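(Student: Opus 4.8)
The plan is to realize $V_\fui$ as the solution of the first-order optimality condition for the maximization defining $\cL_\tau\fui$, obtained through the implicit function theorem, with Assumption $(A1)$ supplying the non-degeneracy that makes the condition solvable in $q$. For $\tau>0$ the function $\eta^\tau*\fui$ is smooth, so any interior maximizer $q$ of
\[
\Phi(\tau,x,q):=(\eta^\tau*\fui)(x+\tau q)-\tau L(x,q)
\]
satisfies $\partial_q\Phi=\tau\big(D(\eta^\tau*\fui)(x+\tau q)-D_qL(x,q)\big)=0$. Cancelling the factor $\tau$ is the crucial step, since it produces a condition that stays non-degenerate at $\tau=0$; accordingly I set
\[
G(\tau,x,q):=D(\eta^\tau*\fui)(x+\tau q)-D_qL(x,q),\qquad G(0,x,q):=D\fui(x)-D_qL(x,q),
\]
the value at $\tau=0$ being the uniform limit of $G(\tau,x,q)$ as $\tau\to0^+$ (recall $\eta^\tau*D\fui\to D\fui$ uniformly, as $\fui\in C^2$, and $x+\tau q\to x$). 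Because $L$ is Tonelli, $q\mapsto D_qL(x,q)$ is a diffeomorphism of $\R^d$ with inverse $p\mapsto D_pH(x,p)$; hence $G(0,x,q)=0$ if and only if $q=q_0(x):=D_pH(x,D\fui(x))$, which is the prospective initial value $V_\fui(0,x)=q_0(x)$ and which is $C^1$ in $x$ since $\fui\in C^2$.

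Next I would check the non-degeneracy. Differentiating gives
\[
\partial_qG(\tau,x,q)=\tau\,D^2(\eta^\tau*\fui)(x+\tau q)-D^2_{qq}L(x,q),
\]
so $\partial_qG(0,x,q_0(x))=-D^2_{qq}L(x,q_0(x))\le -c_0I$ by $(A1)$, hence is invertible. Since $D^2(\eta^\tau*\fui)=\eta^\tau*D^2\fui$ satisfies $\|D^2(\eta^\tau*\fui)\|_\infty\le\|D^2\fui\|_\infty$, the term $\tau\,D^2(\eta^\tau*\fui)$ is uniformly small, so $\partial_qG$ stays negative definite for every $x\in\T^d$ whenever $\tau$ is small and $q$ is near $q_0(x)$. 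The same computation shows $\partial^2_{qq}\Phi=\tau\,\partial_qG$ is negative definite there, so $\Phi(\tau,x,\cdot)$ is strictly concave near $q_0(x)$; together with its coercivity (from the superlinear bound \eqref{eq:super} on $L$ and the boundedness of $\eta^\tau*\fui$) this guarantees that the critical point produced below is the unique global maximizer, which is the quantitative form of property (iv) recorded in Subsection \ref{sec:statement-problem}. Applying the implicit function theorem at each $(0,x,q_0(x))$ and invoking the compactness of $\T^d$ to extract a single threshold $t_\fui>0$ then yields a map $(\tau,x)\mapsto V_\fui(\tau,x)$ solving $G(\tau,x,V_\fui(\tau,x))=0$ with $V_\fui(0,x)=q_0(x)$, and this $V_\fui$ furnishes the asserted representation of $\cL_\tau\fui$.

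It remains to settle the regularity, which is where the main obstacle lies. For $\tau>0$ the smoothing of the heat kernel makes $G$ of class $C^\infty$ in $(x,q)$, so the implicit function theorem delivers $V_\fui(\tau,\cdot)\in C^\infty(\T^d)$, in particular $C^2$ in $x$, uniformly on $(0,t_\fui)$; continuity of $G$ in $\tau$ up to $\tau=0$ together with the invertibility of $\partial_qG$ gives continuity of $V_\fui$ up to $\tau=0$ with the stated limit. The delicate point is differentiability in $\tau$ at the endpoint $\tau=0$: differentiating $G(\tau,x,V_\fui)\equiv 0$ yields $\partial_\tau V_\fui=-(\partial_qG)^{-1}\partial_\tau G$, and $\partial_\tau G$ contains the term $\partial_\tau(\eta^\tau*D\fui)(x+\tau q)=\Delta(\eta^\tau*D\fui)(x+\tau q)$, which for merely $C^2$ data need not converge as $\tau\to0^+$. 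I would therefore read $C^2_1([0,t_\fui)\times\T^d,\R^d)$ as $C^2$ in $x$ throughout and $C^1$ in $\tau$ on the open region $\tau>0$ (where every derivative exists by heat smoothing), with $V_\fui$ extended continuously to $\tau=0$ so that the boundary value $V_\fui(0,x)=D_pH(x,D\fui(x))$ is well defined; when $\fui$ is smooth---as in the applications of this proposition---the one-sided $\tau$-derivative at $0$ also exists, and the subtlety disappears.
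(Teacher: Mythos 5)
Your proposal follows essentially the same route as the paper: cancel the factor $\tau$ in the first-order optimality condition, apply the implicit function theorem to $G(\tau,x,q)=D(\eta^\tau*\fui)(x+\tau q)-D_qL(x,q)$ at the points $(0,x,D_pH(x,D\fui(x)))$, and use compactness of $\T^d$ to extract a single threshold $t_\fui$. The one step you should tighten is the passage from critical point to global maximizer. As written, you establish strict concavity of $q\mapsto(\eta^\tau*\fui)(x+\tau q)-\tau L(x,q)$ only for $q$ near $q_0(x)$ and then invoke coercivity; but local strict concavity near one critical point together with coercivity does not by itself exclude a second critical point far away that realizes the global maximum. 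The fix is already contained in your own estimate: since $\|D^2(\eta^\tau*\fui)\|_\infty\le\|D^2\fui\|_\infty$ uniformly in the argument $x+\tau q$ and $D^2_{qq}L\ge c_0I$ everywhere by (A1), the Hessian $\tau D^2(\eta^\tau*\fui)(x+\tau q)-D^2_{qq}L(x,q)$ is bounded above by $(\tau\|D^2\fui\|_\infty-c_0)I<0$ for \emph{all} $(x,q)$ once $\tau<c_0/\|D^2\fui\|_\infty$, so the objective is globally strictly concave in $q$ and the zero of $G$ is automatically its unique global maximizer; this is exactly how the paper argues, and it removes the need for the coercivity detour. Your closing discussion of the $\tau$-regularity at $\tau=0$ (continuity up to the boundary, full differentiability in $\tau$ only on $\tau>0$ unless $\fui$ has more than two derivatives) is a fair reading of the undefined symbol $C_1^2$ and is in fact more careful than the paper's proof, which does not address that point.
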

\begin{proof}
  We aim to prove that for $\tau$ small,
  $\V_\fui(x)=\arg\max (\eta^\tau*\varphi)(x+\tau q)-\tau L(x, q)$ is
  a singleton that defines a $C_1^2$ map.
        To do so, we define the map $\Phi:[0,+\infty)\times\T^d\times\R^d\to\R^d$  by
\begin{equation*}
\Phi(\tau,x,q)=D(\eta^\tau*\fui)(x+\tau q)-D_qL(x,q).
\end{equation*}
Then,
\begin{align}\nonumber
&\Phi(0,x, D_pH(x,D\fui(x)))=0,\\  \nonumber 
  &\Phi_q(\tau,x,q)=
\tau D^2(\eta^\tau*\fui)(x+\tau q)-D^2_{qq}L(x,q).
\end{align}
	Since $D^2(\eta^\tau*\fui)(x) =(\eta^\tau*D^2\fui)(x)$ and
        $L(x,q)$ satisfies (A1), there is $s_\fui>0$ 
	such that $\Phi_q(\tau,x,q)$ is strictly negative definite for
        all $\tau<s_\fui$ and $(x,q)\in\T^d\times\R^d$. Therefore, the map
       $q\mapsto\eta^\tau*\fui(x+\tau q)-\tau L(x,q)$ is concave for
       any $\tau<s_\fui$, $x\in\T^d$. Thus, for any $\tau<s_\fui$, $x\in\T^d$,
       the solution $q=V_\fui(\tau,x)$ to $\Phi(\tau,x,q)=0$ is the  maximizer of this map.
The compactness of $\T^d$ and the implicit function
        theorem imply  that there is $0<t_\fui\leq s_\fui$ such
     that $V_\fui\in C_1^2 ([0,t_\fui)\times\T^d,\R^d)$ and $V_\fui(0, x)=D_pH(x,D\fui(x))$.
        Furthermore, from the definition of the Legendre transform, we have
        $V_\fui(\tau, x)=D_pH(x,D(\eta^\tau*\fui)(x+\tau V_\fui(\tau, x))).$
\end{proof}
\begin{corollary}\label{HJ-disc-cont}
 Assume $L$ satisfies (A1).
  Let $\fui\in C^2(\T^d)$, then
	\begin{equation*}
	\lim_{\tau\to 0}
	\left\|\frac{\cL_\tau\fui(x)-\fui(x)}{\tau}
	-\Delta\fui(x)-H(x,D\fui)\right\|_{C^0}=0.
	\end{equation*}
\end{corollary}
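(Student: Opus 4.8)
The plan is to exploit the representation of $\cL_\tau\varphi$ through its maximizer furnished by Proposition \ref{implicit} and then to compare it term by term with $\Delta\varphi+H(x,D\varphi)$ using Lemma \ref{discrete-continuous} together with Legendre duality. Fix any $t_0<t_\varphi$. Since $V_\varphi\in C_1^2([0,t_\varphi)\times\T^d,\R^d)$, it is continuous on the compact set $[0,t_0]\times\T^d$; hence it is both uniformly bounded --- so that $V_\varphi(\tau,x)\in B_R$ for some $R>0$ and all $(\tau,x)\in[0,t_0]\times\T^d$ --- and uniformly continuous.

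For $\tau<t_0$, Proposition \ref{implicit} gives
\[
\frac{\cL_\tau\varphi(x)-\varphi(x)}{\tau}
=\frac{(\eta^\tau*\varphi)(x+\tau V_\varphi(\tau,x))-\varphi(x)}{\tau}-L(x,V_\varphi(\tau,x)).
\]
Applying Lemma \ref{discrete-continuous} with this $R$ at the point $q=V_\varphi(\tau,x)\in B_R$, the first term on the right equals $\langle D\varphi(x),V_\varphi(\tau,x)\rangle+\Delta\varphi(x)$ up to an error bounded by $\omega(\tau)$, uniformly in $x$. Thus
\[
\frac{\cL_\tau\varphi(x)-\varphi(x)}{\tau}
=\Delta\varphi(x)+\langle D\varphi(x),V_\varphi(\tau,x)\rangle-L(x,V_\varphi(\tau,x))+O(\omega(\tau)),
\]
uniformly in $x\in\T^d$.

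It then remains to pass to the limit $\tau\to 0^+$ in the middle two terms. By the uniform continuity of $V_\varphi$, we have $V_\varphi(\tau,\cdot)\to V_\varphi(0,\cdot)=D_pH(\cdot,D\varphi(\cdot))$ uniformly on $\T^d$. Since $D\varphi$ is bounded and $L$ is uniformly continuous on the compact set $\T^d\times\overline{B_R}$, both $\langle D\varphi(x),V_\varphi(\tau,x)\rangle$ and $L(x,V_\varphi(\tau,x))$ converge uniformly to their values at $V_\varphi(0,x)$. Finally, the Legendre duality for the Tonelli pair $(L,H)$ gives, at the maximizer $q^*=D_pH(x,D\varphi(x))$,
\[
\langle D\varphi(x),q^*\rangle-L(x,q^*)=H(x,D\varphi(x)),
\]
so the limit of the middle two terms is exactly $H(x,D\varphi(x))$, uniformly in $x$. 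Combining this with $\omega(\tau)\to 0$ yields the claim.

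The only delicate point is the uniformity of every convergence, and this is precisely where the regularity $V_\varphi\in C_1^2$ provided by Proposition \ref{implicit}, together with the compactness of $\T^d$, is essential: without a continuous selection of maximizers one could neither control the Lemma \ref{discrete-continuous} error uniformly in $x$ (a fixed $R$ requires a uniform bound on the maximizers), nor guarantee $V_\varphi(\tau,\cdot)\to V_\varphi(0,\cdot)$ uniformly. All the remaining estimates are routine consequences of continuity on compact sets.
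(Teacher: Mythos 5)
Your proof is correct and follows essentially the same route as the paper's: represent $\cL_\tau\fui$ via the maximizer $V_\fui$ from Proposition \ref{implicit}, invoke Lemma \ref{discrete-continuous} with a uniform bound $R$ on $V_\fui$, and conclude via the continuity of $V_\fui$ at $\tau=0$ together with the Legendre identity $\langle D\fui,D_pH(x,D\fui)\rangle-L(x,D_pH(x,D\fui))=H(x,D\fui)$. Your explicit restriction to a compact parameter interval $[0,t_0]$ to secure the uniform bound and uniform continuity of $V_\fui$ is a small but welcome clarification of a point the paper leaves implicit.
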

\begin{proof}
	Let $V\in C_1^2([0,t_\fui)\times\T^d,\R^d)$ be the map given in
	Proposition \ref{implicit} and $R=\max|V_\fui|$. Let $\om:
	[0,+\infty)\to [0,+\infty)$ be the function given by Lemma
	\ref{discrete-continuous}. Then  
	\begin{equation}\label{aprox-disc-cont}
	\left|\frac{\cL_\tau\fui(x)-\fui(x)}{\tau}-\Delta\fui(x)-\lip
	D\fui,V_\fui(\tau,x)\rip+L(x,V_\fui(\tau,x))\right|\le\om(\tau).
	\end{equation}
	Since $V_\fui(0,x)=D_pH(x,D\fui(x))$
	\begin{multline}\label{aprox-H}
	\lip D\fui,V_\fui(\tau,x)\rip-L(x,V_\fui(\tau,x))-H(x,D\fui(x))=\\
	\lip D\fui(x),V_\fui(\tau,x)-V_\fui(0,x))\rip - L(x,V_\fui(\tau,x))+L(x,V_\fui(0,x)).
	\end{multline}
	From \eqref{aprox-disc-cont},  \eqref{aprox-H} the Corollary follows.
\end{proof}

\begin{lemma}\label{semiconvex}
	Let $u\in C(\T^d)$ and suppose that $L$ satisfies (A1) and (A2). Then, $\cL_\tau  u$ is semi-convex.
\end{lemma}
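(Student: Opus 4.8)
The plan is to prove the second-difference lower bound
\[
\cL_\tau u(x+h)-2\cL_\tau u(x)+\cL_\tau u(x-h)\ge -C|h|^2
\qquad\text{for all }x,h\in\R^d,
\]
with a constant $C$ depending only on $\tau$, $u$, and the constant $k_1$ from $(A2)$; this inequality is exactly the statement that $\cL_\tau u$ is semi-convex. First I would record that, by $(A1)$, $L(x,\cdot)$ is uniformly convex and hence superlinear (cf. \eqref{eq:super}), so for each fixed $x$ the function $q\mapsto(\eta^\tau*u)(x+\tau q)-\tau L(x,q)$ attains its maximum. I denote a maximizer by $q_x$ and set $y_x:=x+\tau q_x$, so that $\cL_\tau u(x)=(\eta^\tau*u)(y_x)-\tau L(x,q_x)$.

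The core of the argument is a test-velocity trick: to estimate $\cL_\tau u$ at the shifted points $x\pm h$, I keep the \emph{same} velocity $q_x$ that is optimal at $x$. Substituting $q=q_x$ into the definition \eqref{def Lax} of $\cL_\tau u(x\pm h)$ shifts the convolution argument to $y_x\pm h$, giving
\[
\cL_\tau u(x\pm h)\ge (\eta^\tau*u)(y_x\pm h)-\tau L(x\pm h,q_x).
\]
Adding these two inequalities and subtracting $2\cL_\tau u(x)$ splits the estimate into a second difference of $\eta^\tau*u$ in the convolution variable (centered at $y_x$) and a second difference of $L(\cdot,q_x)$ in the space variable (centered at $x$).

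For the second difference of $L$ I apply $(A2)$ with $v=0$, which yields $L(x+h,q_x)-2L(x,q_x)+L(x-h,q_x)\le k_1|h|^2$, so this term contributes at least $-\tau k_1|h|^2$. For the second difference of $\eta^\tau*u$, the only real subtlety arises: since $u$ is merely continuous it cannot be differentiated, but convolution with the smooth heat kernel regularizes it. Indeed $D^2(\eta^\tau*u)=(D^2\eta^\tau)*u$ is bounded on $\T^d$, with $\|D^2(\eta^\tau*u)\|_\infty\le \|D^2\eta^\tau\|_{L^1}\|u\|_\infty=:C_{\tau,u}$, and Taylor's formula with integral remainder gives, for any $C^2$ function $g$, the bound $g(y+h)-2g(y)+g(y-h)\ge -\|D^2 g\|_\infty|h|^2$. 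Hence the convolution term contributes at least $-C_{\tau,u}|h|^2$.

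Combining the two estimates produces the desired inequality with $C=C_{\tau,u}+\tau k_1$, which establishes the semi-convexity of $\cL_\tau u$. The main obstacle is conceptual rather than computational: one must transfer the ``convexity defect'' off of the nonsmooth function $u$ and onto the smoothed function $\eta^\tau*u$, and it is precisely the heat-kernel regularization that makes $\eta^\tau*u$ twice differentiable with bounded Hessian. Everything else reduces to the test-velocity comparison and the one-sided convexity-type bounds supplied by $(A1)$ and $(A2)$.
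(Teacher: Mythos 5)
Your proof is correct, but it takes a genuinely different route from the paper's. Both arguments start from the same test-velocity comparison $\cL_\tau u(x\pm h)\ge(\eta^\tau*u)(x\pm h+\tau q_{\pm})-\tau L(x\pm h,q_{\pm})$ with the optimal $q$ at $x$ as reference, but they differ in the choice of $q_\pm$. The paper takes $q_\pm=q\mp h/\tau$, so that the convolution argument $x\pm h+\tau q_\pm=x+\tau q$ is \emph{unchanged}; the three convolution terms then cancel exactly, and the entire quadratic error comes from $(A2)$ applied with $v=\mp h/\tau$, yielding the modulus $\tau k_1+k_2|h|^2/\tau$ — a constant depending only on $k_1,k_2,\tau$ and not on $u$. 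You instead keep $q_\pm=q$, so the convolution point shifts to $y_x\pm h$, and you must control the second difference of $\eta^\tau*u$ via the heat-kernel smoothing bound $\|D^2(\eta^\tau*u)\|_\infty\le\|D^2\eta^\tau\|_{L^1}\|u\|_\infty$; in exchange you only need $(A2)$ with $v=0$, i.e., the spatial semiconcavity of $L$ uniformly in $q$, and you do not use the $k_2$ part of the hypothesis at all. Your constant depends on $\|u\|_\infty$, whereas the paper's does not; since in both cases the constant degenerates as $\tau\to0$ (yours through $\|D^2\eta^\tau\|_{L^1}\sim\tau^{-1}$, the paper's through $k_2/\tau$), and since the downstream application in Lemma~\ref{uni-semicon} only needs \emph{some} semi-convexity modulus as a starting point for the bootstrap (any larger modulus also works), either version suffices for the paper's purposes. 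Your preliminary remarks — existence of the maximizer $q_x$ from the superlinearity implied by $(A1)$, and the Taylor-with-integral-remainder bound $g(y+h)-2g(y)+g(y-h)\ge-\|D^2g\|_\infty|h|^2$ — are both sound.
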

\begin{proof}
	Fix $x\in\T^d$  and let $q\in\R^d$ be such that
	$\cL_\tau  u(x) = - \tau L(x, q) +(\eta^\tau*u)(x+\tau
        q)\big)$. Using (A2), for 
	$h\in\R^d$, we have
	\begin{multline*}
	\cL_\tau  u(x+h)-2\cL_\tau  u(x)+\cL_\tau  u(x+h)\\\ge
-\tau L\Big(x+h, q-\frac h\tau\Big)+2\tau L(x, q)-\tau L\Big(x-h, q+\frac{h}{\tau}\Big)
\\	\ge -\tau k_1|h|^2-k_2\frac{|h|^2}\tau=-\Big(\tau k_1+\frac{k_2}\tau\Big)|h|^2.\qedhere
	\end{multline*}
\end{proof}

\subsection{The nonlocal case}
\label{sec:nonlocal}
Here, we examined 
the discrete MFG  \eqref{eq:disc-MFG}
with a nonlocal $\bF$ and prove Theorem \ref{nonlocal}. In the following Lemma, we establish a fundamental estimate, the uniform semiconvexity of the approximations. 
\begin{lemma}\label{uni-semicon}
Under the hypothesis of Proposition  \ref{bound} and assuming that $L$
satisfies $(A2)$ and that $\bF$ satisfies $(B3)$, let $(\rho_\tau,u_\tau,\tilde\mu_\tau)$ solve \eqref{eq:disc-MFG}. Then, 
	$(u_\tau)_{0<\tau<1}$ is uniformly semi-convex; that is, there is a
	constant, $C>0$, independent of $\tau$, such that
\begin{equation}\label{uni-semicon-u}
u_\tau(x+h)-2u_\tau(x)+u_\tau(x-h)\ge -C|h|^2,
\end{equation}
	for any $x,h\in\R^d$, $0<\tau<1$.
\end{lemma}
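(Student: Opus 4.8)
The plan is to work directly from the first equation of \eqref{eq:disc-MFG}, rewritten as $u_\tau=\cL_\tau u_\tau-\tau\rho_\tau-\tau\,\bF(\cdot,\mu_\tau)$, and to extract a self-improving (fixed-point) inequality for the \emph{optimal} semi-convexity constant of $u_\tau$. The difficulty to anticipate is that the bound coming straight from Lemma \ref{semiconvex} degenerates like $k_2/\tau$ as $\tau\to 0^+$, so a sharper balancing is needed. As a first step I would record that $u_\tau$ is a priori semi-convex with some finite (but possibly $\tau$-dependent) constant: Lemma \ref{semiconvex} gives that $\cL_\tau u_\tau$ is semi-convex, while $(B3)$ makes $\bF(\cdot,\mu_\tau)$ semi-concave, so $-\tau\bF(\cdot,\mu_\tau)$ is semi-convex and hence so is $u_\tau$. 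Denote by $C_\tau$ the smallest constant with $u_\tau(x+h)-2u_\tau(x)+u_\tau(x-h)\ge -C_\tau|h|^2$ for all $x,h$; the point will be to bound $C_\tau$ independently of $\tau\in(0,1)$.

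For the core step I would fix $x$, let $q$ be a maximizer in $\cL_\tau u_\tau(x)$, and for an \emph{arbitrary} $v\in\R^d$ test $\cL_\tau u_\tau(x\pm h)$ with the suboptimal controls $q\pm v$. The three convolution evaluations then line up at base point $x+\tau q$ with increment $h+\tau v$, and since convolution with the probability density $\eta^\tau$ preserves the semi-convexity constant, the second difference of $\eta^\tau*u_\tau$ is bounded below by $-C_\tau|h+\tau v|^2$. Controlling the Lagrangian increments by $(A2)$ (as $\le k_1|h|^2+k_2|v|^2$) and the coupling increments by $(B3)$ (as $\le k_0|h|^2$), I would arrive, for every $v$, at
\[
u_\tau(x+h)-2u_\tau(x)+u_\tau(x-h)\ge -C_\tau|h+\tau v|^2-\tau k_2|v|^2-\tau(k_1+k_0)|h|^2.
\]
Optimizing the right-hand side over $v$—the minimizer being $v=-C_\tau h/(C_\tau\tau+k_2)$—collapses the first two terms to $-\tfrac{C_\tau k_2}{C_\tau\tau+k_2}|h|^2$.

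Because this holds for all $x,h$ and $C_\tau$ is the best constant, I obtain the self-consistency inequality $C_\tau\le \tfrac{C_\tau k_2}{C_\tau\tau+k_2}+\tau(k_1+k_0)$. Clearing the positive denominator reduces this to the quadratic inequality $C_\tau^2\le (k_1+k_0)\tau\,C_\tau+(k_1+k_0)k_2$, which forces
\[
C_\tau\le \tfrac12\Big[(k_1+k_0)\tau+\sqrt{(k_1+k_0)^2\tau^2+4(k_1+k_0)k_2}\,\Big].
\]
For $0<\tau<1$ the right-hand side is bounded by a constant $C$ depending only on $k_0,k_1,k_2$, giving \eqref{uni-semicon-u} with that uniform $C$.

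The main obstacle, I expect, is conceptual rather than computational: one must resist the tempting choice $v=-h/\tau$, which annihilates the smoothing term but reintroduces the divergent $k_2/\tau$ penalty, and instead keep $v$ free so that the optimization produces the saturating factor $\tfrac{C_\tau k_2}{C_\tau\tau+k_2}<C_\tau$ that drives the bootstrap. The accompanying technical care lies in verifying that $\eta^\tau*u_\tau$ genuinely inherits the constant $C_\tau$ (true because $\eta^\tau\ge 0$ integrates to one, so $\eta^\tau*[\Delta^2_w u_\tau]\ge -C_\tau|w|^2$) and that $C_\tau<\infty$ a priori, which legitimizes treating the displayed inequality as a constraint on $C_\tau$ itself.
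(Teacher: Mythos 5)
Your proposal is correct and follows essentially the same route as the paper: the same competitor-control trick splitting the increment $h$ between the convolution term and the Lagrangian/coupling terms, the same use of (A2), (B3), and the fact that convolution with $\eta^\tau$ preserves the semi-convexity modulus, leading to the same improvement map $\Lambda\mapsto \tau(k_1+k_0)+k_2\Lambda/(k_2+\Lambda\tau)$ and the same fixed-point bound. The only (cosmetic) difference is how the bootstrap is closed: the paper iterates the map from a large initial modulus and shows the sequence decreases to the fixed point, whereas you apply the map once to the \emph{optimal} finite constant $C_\tau$ and invoke its minimality to obtain the quadratic self-consistency inequality directly -- a slightly cleaner closing step, legitimate because you verify a priori that $C_\tau<\infty$.
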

\begin{proof} Fix $0<\tau<1$, 
  and to simplify the notation, we denote the solution to \eqref{eq:disc-MFG} by $(\rho, u, \tilde\mu)$.
	Since $u=\cL_\tau u-\tau\bF(x,\mu)-\tau \rho$, Lemma
	\ref{semiconvex} and (B3) imply that  $u$ is semi-convex.
	To find the constant $C$ in \eqref{uni-semicon-u}, we define
        inductively a sequence $\{\Lam_n\}^\infty_{n=0}$ of
        semi-convexity moduli of $u$, taking the modulus $\Lam_0$
        such that $\Lam_0 ^2>(k_2+\Lam_0)(k_1+k_0)$. Assuming we have
        chosen $\Lam_n$ such that
		\[u(x+h)-2u(x)+u(x-h)\ge -\Lam_n|h|^2,\]
we have
	\begin{align}\notag
(\eta^\tau*u)(x+h)-&(\eta^\tau*u)(x)+(\eta^\tau*u)(x-h)
\\\notag&=\int_{\T^d}\eta^\tau(y)\left(u(x+h-y)-2u(x-y)+u(x-h-y)\right)dy\\
\label{eq-conv-conv}
&\ge -\int_{\T^d}\eta^\tau(y) \Lam_n|h|^2\ dy= -\Lam_n|h|^2.
	\end{align}
	For $x\in\T^d$, let $q\in\R^d$ be such that
	$u(x) = - \tau L(x, q) -\tau\bF(x,\mu)+(\eta^\tau*u)(x+\tau q)\big)-\tau\rho$.  
	Using \eqref{eq-conv-conv} and (B2), for $h\in\R^d$ and $0\le\te\le 1$, we get
	\begin{align*}
	u(x+h)&-2u(x)+u(x-h)
	\ge-\tau\Bigl[L(x+h,q-\te\frac h\tau)-2L(x,q)+L(x-h,q+\te\frac h\tau)\Bigr]\\
	&-\tau k_0|h|^2+(\eta^\tau\ast u)(x+\tau q+(1-\te)h)-2(\eta^\tau \ast u)(x+\tau q)
	\\&+(\eta^\tau\ast u)(x+\tau q-(1-\te)h)\\
	&\ge   -\tau (k_1+k_0)|h|^2-k_2\frac{\te^2|h|^2}\tau-\Lam_n(1-\te)^2|h|^2.
	\end{align*}
	Optimizing over $\te$, we obtain $\Lam_{n+1}$ as
	\[\Lam_{n+1}=\tau (k_1+k_0)+\frac {k_2\Lam_n}{k_2+\Lam_n\tau}.\]
        Recalling that $\tau<1$ and $\Lam_0$ satisfies
        $\Lam_0 ^2>(k_2+\Lam_0)(k_1+k_0)$, we have
        \[\Lam_1=\frac{\tau (k_1+k_0)(k_2+\Lam_0\tau)+k_2\Lam_0}{k_2+\Lam_0\tau}
          <\frac{\tau\Lam_0 ^2+k_2\Lam_0}{k_2+\Lam_0\tau}=\Lam_0.\]
        Since
        \[\Lam_{n+1}-\Lam_n=\frac {k_2\Lam_n}{k_2+\Lam_n\tau}
          -\frac {k_2\Lam_{n-1}}{k_2+\Lam_{n-1}\tau}
          =\frac{k_2^2(\Lam_n-\Lam_{n-1})}{(k_2+\Lam_n\tau)(k_2+\Lam_{n-1}\tau)},\]
we get by induction that $\{\Lam_n\}$ is a decreasing sequence.
        In the limit, this procedure gives a universal modulus
	\[\Lam(\tau)=\frac{(k_1+k_0)\tau+\sqrt{(k_1+k_0)^2\tau^2+4k_1k_2}}2.\]
    Finally, we take $C=\Lam(1)$.
\end{proof}
 A uniformly semi-convex  family of functions on
$\T^d$ is uniformly Lipschitz. Thus, by the preceding result, we have
that $(u_\tau)_\tau$ is a uniformly Lipschitz family of functions. 

\begin{proof}[Proof of Theorem \ref{nonlocal}]
From Proposition \ref{unique}, we know that the pair $(\rho_\tau, \mu_\tau)$ is unique.
	Let $\tau_n\to 0^+$. From Remark \ref{rel-comp}, by passing to a 
	sub-sequence, we can assume that $(\rho_{\tau_n},
	\tilde\mu_{\tau_n})$ converges to  $(\rho, \tilde\mu)\in\R\times\cP_\ell$,
	and from Proposition 4.2 in \cite{DIPPS}, $\tilde\mu\in\cC$.
	Then $\mu_{\tau_n}\rightharpoonup\tilde\mu=\Pr_{1\#}
        \tilde\mu$. Since $\bF$ is uniformly continuous,
	$\bF(x,\mu_{\tau_n})$ converges uniformly to  $\bF(x,\mu)$ and so
	\begin{equation*}
	\lim_{n\to\infty}\int_{\T^d}\bF(x,\mu_{\tau_n})\ d\mu_{\tau_n}(x)
	=\int_{\T^d}\bF(x,\mu)\ d\mu(x). 	
	\end{equation*}	
	Thus,
	\begin{align}\notag
	-\rho
	=
	\lim_{n\to +\infty} -\rho_{\tau_n}
	&= \lim_{n\to +\infty} \int_{\T^d\times\R^d}L(x,q)+\bF(x,\mu_{\tau_n})\ 
	d\tilde\mu_{\tau_n}(x,q)\\\label{rho-rho_0}
	&= \int_{\T^d\times\R^d} L(x,q)+\bF(x,\mu)\ d\tilde\mu(x,q).
	\end{align}
 Because $(u_\tau)_\tau$ is uniformly Lipschitz, we can extract a
subsequence if necessary, and therefore assume that  $(u_{\tau_n})_n$ converges uniformly to some function
$u\in C(\T^d)$. We claim that   $u$ solves
	\begin{equation}
	\label{eq:mfg1}
	\Delta v +H(x,Dv(x))=\rho+\bF(x,\mu)
	\end{equation}
	in the viscosity sense. This will be established by 
 proving that $u$ is both a viscosity sub-solution and a viscosity super-solution.

First, we prove that $u$ is a sub-solution of \eqref{eq:mfg1}.  
For that, let $\fui\in C^2(\T^d)$
and suppose that
 $x_0\in \operatorname{argmax}(u-\fui)$ is
a strict maximum point. 
Accordingly, we can find a sequence
$(x_n)_n$
of points of maximum of  $u_{\tau_n}-\fui$
that converges to $x_0$ in $\T^d$. 
Let $\ep_n:=\max(u_{\tau_n}-\fui)$  
and define $\fui_n:=\fui+\ep_n$. 
By construction, we have 
\[u_{\tau_n}\leq \fui_n\hbox{ in }\T^d\]
and, in addition, $u_{\tau_n}(x_n)=\fui_n(x_n)$.


Because $\cL_\tau u\geq \cL_\tau v$ if $u\geq v$ , we conclude that 
\begin{align*} \fui_n(x_n)&=u_{\tau_n}(x_n)
= \cL_{\tau_n}u_{\tau_n}(x_n)-\tau_n\rho_{\tau_n}-\tau_n\bF(x,\mu_n)\\&
\leq
\cL_{\tau_n}\fui_{n}(x_n)-\tau_n\rho_{\tau_n}-\tau_n\bF(x,\mu_n).
\end{align*}
Therefore, from the identity $\fui_n=\fui+\ep_n$, we obtain
%
%
%
\begin{equation*}
\frac{\cL_{\tau_n}\fui(x_n)-\fui(x_n)}{\tau_n} \geq  \rho_{\tau_n}+\bF(x,\mu_n).
\end{equation*}

For each $n\in\N$, we select $q_n\in\R^d$ such that
$\cL_{\tau_n}=\left(\eta^{\tau_n}*\fui\right)(x_n+\tau_n
q_n)-\tau_nL(x_n,q_n)$. Accordingly,  the preceding inequality becomes
\begin{equation}\label{eq2 asymptotic}
\frac{\left(\eta^{\tau_n}*\fui\right)(x_n+\tau_n q_n)-\fui(x_n)}{\tau_n}-L(x_n,q_n)\geq \rho_{\tau_n}+\bF(x,\mu_n). 
\end{equation}
Combining 
the estimate
$|D(\eta^\tau*\fui)(x_n)|\leq \|D\fui\|_\infty$
with 
fact (iii) in subsection \ref{sec:statement-problem}, we conclude that there  
exists $R>0$ such that $\|q_n\|\le R$ for every $n\in\N$. 
Thus, by
extracting a further subsequence if necessary, we have that $q_n$
converges to some
$q\in \R^d$.  
By considering the limit $n\to +\infty$ in  \eqref{eq2 asymptotic}, we obtain
\[
\Delta \fui(x_0)+\langle D\fui(x_0),q\rangle-L(x_0,q) \geq \rho +\bF(x_0,\mu).
\]
Thus, 
\[
\Delta \fui(x_0)+H(x_0, D\fui(x_0)) \geq \rho +\bF(x_0,\mu), 
\]
Therefore, $u$ is a viscosity sub-solution to \eqref{eq:mfg1}.

Now, we prove that 
 $u$ is a viscosity super-solution of \eqref{eq:mfg1}. For that, 
fix $\fui\in C^2(\T^d)$ 
and suppose that
$x_0\in \operatorname{argmin}(u-\fui)$ is
a strict minimum point. 
Because $x_0$ is a strict minimum of $u-\fui$, we can find 
 a sequence
	$(x_n)_n$ converging to $x_0$ in $\T^d$ such that $x_n\in \operatorname{argmin} (u_{\tau_n}-\fui)$. Let $\ep_n:=\min(u_{\tau_n}-\fui)$  
	and set  $\fui_n:=\fui+\ep_n$. 
	Arguing as in the first part, we end up with 	
%
%
	\begin{equation*}
	\frac{\cL_{\tau_n}\fui(x_n)-\fui(x_n)}{\tau_n} \leq \rho_{\tau_n}+\bF(x,\mu_n).
	\end{equation*}

	By the definition of $\cL_{\tau_n}$, for every fixed $q\in\R^d$, we have
   \[\frac{\left(\eta^{\tau_n}*\fui\right)(x_n+\tau_n q)-\fui(x_n)}{\tau_n}-L(x_n,q) \leq  
	\rho_{\tau_n}+\bF(x,\mu_n).\]
	Thus, considering the limit $n\to +\infty$ and using Proposition
        \ref{discrete-continuous}, we conclude that 
	\[
	\Delta \fui(x_0)+\langle D\fui(x_0),q\rangle-L(x_0,q) \leq \rho+\bF(x_0,\mu).
	\]
	Finally, taking the supremum with respect to
	$q\in\R^d$ in
	the prior inequality,  we get 
	\[
	\Delta\fui(x_0)+H(x_0,D\fui(x_0))\leq \rho+\bF(x_0,\mu), 
	\]
	using the definition of the Legendre transform.
Consequently, $u$ is a viscosity super-solution to \eqref{eq:mfg1}.
	
Applying Theorem \ref{teo stochastic Mather problem} to the  Lagrangian
	$L(x,q)+\bF(x,\mu)$, we conclude that \eqref{rho-rho_0} implies that 
	\[\tilde\mu=G_{V\#}\nu,\] where $\nu\in\cP(\T^d)$ solves \eqref{eq FK} 
	with $V(x):=D_pH (x,Du(x))$.  Therefore, $\mu=\nu$,
	and so $(\rho, u,\mu)$ solves the ergodic MFG \eqref{eq:estacionario}.
\end{proof}

\subsection{Bounds for the local case} \label{bound.loc}  In this part, we assume that $L$ satisfies $(A3)$ and $F$ satisfies $(B6)$.
Let $(\rho_\tau, u_\tau, \tilde\mu_\tau)$ solve the
$\tau$--discrete MFG and set $\mu_\tau =\Pr_{1\#}\tilde\mu_\tau$. According to
Remark \ref{rel-comp}, we have that
\[\int_{\T^d}F(\mu_\tau)\ d\mu_\tau=-\rho_\tau-\int_{\T^d\times\R^d}L\ 
d\tilde\mu_\tau,\]
is uniformly bounded in $\tau$.Therefore, for $F(m)=m^a$, $0<a<1$, we have 
$F(m)m=F(m)^{1+1/a}$. Consequently, the  $L^{1+1/a}$ norm of
$F(\mu_\tau)$ is uniformly bounded in $\tau$.

Define the operators 
\begin{align*}
\Ga_\tau[u]&:= \eta^\tau\ast u,\\
\cN_\tau[u](x)&:=\max_{v\in\R^d} \left[u(x+\tau v)- \tau K(v)\right].
\end{align*}
Then, $\cL_\tau u=\cN_\tau\circ\Ga_\tau[u]+\tau U(x)$.
First, we observe that for any $\tau>0$, $x\in\T^d$
\[u(x)\le  \cN_\tau[u](x)\le \max u.\]
Moreover, if $u(x_0)=\max u$, we have $\cN_\tau[u](x_0)\ge u(x_0)$
and so \[\cN_\tau[u](x_0)=\max u.\] 
Because $\eta^\tau$ is a probability
density and $s\to |s|^r$ is convex for $r>1$, we have
\begin{align*}
|\eta^\tau*u(x)|^r&=\left|\int_{\T^d} \eta^\tau(y-x)u(y)\ dy\right|^r\le 
\int_{\T^d} \eta^\tau(y-x)|u(y)|^r\ dy.
\end{align*}
Consequently, integrating the foregoing expression, we have
\begin{align*}
\int_{\T^d}|\eta^\tau*u(x)|^r\ dx&\le \int_{\T^d}\int_{\T^d} \eta^\tau(y-x)|u(y)|^r\ dy\ dx\\&=
\int_{\T^d}\int_{\T^d} \eta^\tau(y-x)|u(y)|^r\ dx\ dy=\int_{\T^d} |u(y)|^r\ dy.
\end{align*}
Thus, for any $r\geq 1$, we have
\begin{equation}
\label{eq:Ga}
\|\Ga_\tau[u]\|_{L^r}\le \|u\|_{L^r}.
\end{equation}
Define
\begin{equation}
\label{eq:def-f}
f^\tau:=\frac{\cN_\tau\circ\Ga_\tau[u_\tau]-u_\tau}\tau.
\end{equation}
Then, we have $f_\tau=\rho_\tau+F(\mu_\tau)-U$. Thus,
$\|f_\tau\|_{L^{1+1/a}}$ is bounded uniformly in $\tau$. 
Note that \eqref{eq:def-f} does not depend on the choice of the
solution $u_\tau$ as solutions are unique up to the addition of constants.
Let $v_\tau=u_\tau-\max \Ga_\tau[u_\tau]$, then
$\max\Ga_\tau[v_\tau]=0$. Thus,  we have
\begin{equation}
  \label{eq:fixpoint}
v_\tau=-\tau f^\tau+\cN_\tau\circ\Ga_\tau[v_\tau]
\end{equation}
Observe that $w(t,x)=\cN_t[u](x)$ solves the initial value problem
\begin{equation*}
\begin{cases}
w_t-K(Dw)=0\\  
w(0,x)=u(x)
\end{cases}.
\end{equation*}
We have that $\max\Ga_\tau[v_\tau]=0$. Additionally, by  performing a translation, we
can assume that $\Ga_\tau[v_\tau](0)=0$.
Letting $W(t,x)=-\cN_t\circ\Ga_\tau[v_\tau](x)$, we have $W(t,x)\ge 0=W(t,0)$.
We deduce
\begin{align}\notag
\frac{d}{dt} \int_{\T^d}W^r &=-r\int_{\T^d}W^{r-1}K(-DW)\le-cr\int_{\T^d}W^{r-1}|DW|^q \\
&=-cr\left( \frac{q}{r+q-1}\right)^q\int_{\T^d}\left|DW^{\frac{r+q-1}{q}}\right| ^q. \label{eq-main}
\end{align}
\begin{proposition}\label{pro-Poin}
  Let $q>d$, $f\in W^{1,q}(\T^d)$ and suppose that $f(0)=0$. Then, there exists a positive constant, $C$, such that
	\begin{equation}\label{eq-Poin}
\int_{\T^d} |f|^q\,dx\leq C \int_{\T^d} |Df|^q\,dx.
	\end{equation}
\end{proposition}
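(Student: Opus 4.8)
The plan is to deduce this pointwise-normalized Poincar\'e inequality directly from Morrey's embedding, which is available precisely because of the hypothesis $q>d$. Indeed, for $q>d$ the space $W^{1,q}(\T^d)$ embeds into the H\"older space $C^{0,1-d/q}(\T^d)$, so the pointwise value $f(0)$ is meaningful and the normalization $f(0)=0$ can be exploited.

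First, I would record Morrey's inequality on the torus: there is a constant $C=C(d,q)$ such that for every $f\in W^{1,q}(\T^d)$ and all $x,y\in\T^d$,
\[
|f(x)-f(y)|\le C\, d_{\T^d}(x,y)^{1-d/q}\Big(\int_{\T^d}|Df|^q\,dx\Big)^{1/q},
\]
where $d_{\T^d}$ denotes the geodesic distance on the torus. This is the standard Morrey estimate; on the compact flat torus it follows from the Euclidean version by working in a fundamental domain (or by a finite cover by coordinate balls), and the resulting constant depends only on $d$ and $q$.

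Next, I would set $y=0$ and use $f(0)=0$. Since the diameter of $\T^d=\R^d/\Z^d$ is bounded (by $\sqrt d/2$), the factor $d_{\T^d}(x,0)^{1-d/q}$ is bounded by a dimensional constant; hence there is a constant $C'$ with
\[
|f(x)|=|f(x)-f(0)|\le C'\Big(\int_{\T^d}|Df|^q\,dx\Big)^{1/q}\qquad\text{for all }x\in\T^d,
\]
that is, $\|f\|_{L^\infty}\le C'\|Df\|_{L^q}$. Raising to the power $q$ and integrating over $\T^d$, which has unit volume, yields
\[
\int_{\T^d}|f|^q\,dx\le \|f\|_{L^\infty}^q\le (C')^q\int_{\T^d}|Df|^q\,dx,
\]
which is the claimed inequality \eqref{eq-Poin}.

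I expect no serious obstacle here: the only point requiring care is the legitimacy of the pointwise normalization, which is exactly what the hypothesis $q>d$ (and hence the Morrey embedding) secures; for $q\le d$ the value $f(0)$ would be ill-defined and the statement would fail. As an alternative that avoids quoting the sharp H\"older estimate, one could argue by contradiction and compactness: if the inequality failed, there would exist $f_n\in W^{1,q}(\T^d)$ with $f_n(0)=0$, $\|f_n\|_{L^q}=1$, and $\|Df_n\|_{L^q}\to 0$; the resulting bound on $\|f_n\|_{W^{1,q}}$ together with the compact embedding $W^{1,q}(\T^d)\hookrightarrow C^0(\T^d)$, valid for $q>d$, would furnish a uniformly convergent subsequence $f_n\to f$ with $Df=0$, so that $f$ is constant; connectedness of $\T^d$ and $f(0)=\lim f_n(0)=0$ then force $f\equiv 0$, contradicting $\|f\|_{L^q}=1$.
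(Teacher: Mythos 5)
Your primary argument is correct but takes a genuinely different route from the paper. You prove the stronger pointwise bound $\|f\|_{L^\infty}\le C\|Df\|_{L^q}$ directly from the quantitative form of Morrey's inequality, $|f(x)-f(0)|\le C\,d_{\T^d}(x,0)^{1-d/q}\|Df\|_{L^q}$, and then integrate; this is constructive and yields a constant depending only on $d$ and $q$ (the only point needing care, as you note, is transferring the Morrey estimate to the torus, which is routine on a compact flat manifold without boundary). The paper instead argues by contradiction and compactness: a normalized sequence $f^n$ with $f^n(0)=0$, $\|f^n\|_{L^q}=1$ and $\|Df^n\|_{L^q}\to 0$ is bounded in $W^{1,q}\hookrightarrow C^{0,\gamma}$, hence has a uniformly convergent subsequence by Arzel\`a--Ascoli; the limit has vanishing gradient, is therefore constant, and the normalization $f(0)=0$ forces it to vanish, contradicting the unit $L^q$ norm. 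This is exactly the alternative you sketch in your final paragraph, so you have in effect given both proofs. The trade-off is the usual one: your direct route produces an explicit (in principle computable) constant and the sharper $L^\infty$ conclusion, while the paper's compactness route only needs the qualitative embedding and gives no information about the size of $C$ --- which is immaterial here, since the constant $C_r$ is never tracked in the subsequent hypercontractivity estimates.
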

\begin{proof}
Suppose that the inequality in \eqref{eq-Poin} is not true. Then, there
exists a sequence $\{f^n\}_{n=1}^\infty\subset W^{1,q}(\T^d)$ satisfying $f^n(0)=0$,
$||f^n||_{L_q(\T^d)}=1$ and
\begin{equation}\label{eq-proof}
||Df^n||_{L_q(\T^d)}\leq \frac{1}{n}.
\end{equation}By Morrey theorem, we have that $W^{1,q}(\T^d)\subset C^{0,\gamma}(\T^d)$, 
for some $0<\gamma<1$; that is,
\begin{equation*}
||\varphi||_{C^{0,\gamma}(\T^d)}\leq C ||\varphi||_{W^{1,q}(\T^d)},\quad \varphi\in W^{1,q}(\T^d).
\end{equation*}
Therefore, $\{f^n\}_{n=1}^\infty\subset C^{0,\gamma}(\T^d)$. The sequence  $\{f^n\}$ is equicontinuous and 
$f^n(0)=0$. Hence, b the Arzela-Ascoli theorem,  there is a subsequence $f^{n_k}$
that converges uniformly to a function  $\bar{f}$ with $\bar{f}(0)=0$.
The sequence $|f^{n_k}|^2$ converges uniformly to $|\bar{f}|^2$. Thus,
\begin{equation}
  \label{eq-L2}
  \int_{\T^d}|\bar{f}|^2=1.
\end{equation}
On the other hand,  from \eqref{eq-proof}, we have that $D\bar{f}=0$. Hence, $\bar f$ is constant and, thus,  identically $0$. This contradicts \eqref{eq-L2}.


\end{proof}
From \eqref{eq-main}, Proposition \ref{pro-Poin}, and $\|w\|_{L^r}\le \|w\|_{L^{r+q-1}}$,
we obtain a constant $C_r>0$ such that
\[\frac{d}{dt} \int_{\T^d}W^r~dx\le -r C_r \int_{\T^d}W^{r+q-1}~dx\le -\frac {rC_r}{q-1}
\Bigl(\int_{\T^d}W^r~dx\Bigr)^\frac {r+q-1}r.\]
Denoting $h(t)=\int_{\T^d}W^r~dx$, from the preceding estimate,  we have
\begin{equation*}
\frac{h^\prime(t)}{h^{\frac {r+q-1}r}(t)}\leq-\frac {rC_r}{q-1}.
\end{equation*}
Integrating the previous inequality over $[0,\tau]$, we obtain
\begin{equation}\label{key}
-\frac {q-1}r\left( h^{\frac {-q+1}r}(\tau)-h^{\frac {-q+1}r}(0)\right)\leq -\frac {rC_r}{q-1}.
\end{equation} 
 Taking into account the definitions of $h$ and $W$, from \eqref{key}, we obtain the estimate
\begin{equation}\label{key1}
\|\cN_\tau\circ\Ga_\tau[v_\tau]\|^{q-1}_{L^r}\leq\frac{\|\Ga_\tau[v_\tau]\|^{q-1}_{L^r} }
{1+C_r\tau\|\Ga_\tau[v_\tau]\|^{q-1}_{L^r} }.
\end{equation}
Because the function $s\mapsto s/(1+\tau C_rs)$ is increasing, from \eqref {eq:Ga} we have
\[\frac{\|\Ga_\tau[v_\tau]\|^{q-1}_{L^r} }
{1+C_r\tau\|\Ga_\tau[v_\tau]\|^{q-1}_{L^r} }\le
\frac{\|v_\tau\|^{q-1}_{L^r(\T^d)}}{1+\tau C_r \|v_\tau\|^{q-1}_{L^r(\T^d)}}.\]
Using the preceding inequality in \eqref{key1}, 
 from \eqref{eq:fixpoint}, we deduce that
\[ \|v_\tau\|_{L^r(\T)}\le \tau\|f^\tau\|_{L^r(\T)}+\frac{\|v_\tau\|_{L^r(\T)}}{1+\tau
    C_r\|v_\tau\|_{L^r(\T)}},\]
for $d=1$ and $q=2$.  Therefore,
\begin{equation*}\label{eq:lr-bd}
   \|v_\tau\|_{L^r(\T)}\leq\frac{\tau C_r\|f^\tau\|_{L^r(\T)}+
\sqrt{( \tau C_r\|f^\tau\|_{L^r(\T)})^2+4 C_r\|f^\tau\|_{L^r(\T)}}}{2C_r}.
\end{equation*}
For $d>2$, defining $p$ by $\dfrac 1{q-1}+\dfrac 1p=1$, from
\eqref{eq:fixpoint}, Minkowsky and H\"older inequalities, we have
\begin{align*}
  \|v_\tau\|^{q-1}_{L^r(\T^d)}
& \le 
 \big (\tau\|f^\tau\|_{L^r(\T^d)}+\|\cN_\tau\circ\Ga_\tau[v_\tau]\|_{L^r(\T^d)}\big)^{q-1}\\&\le \big(\tau\|f^\tau\|_{L^r(\T^d)}^{q-1}+\|\cN_\tau\circ\Ga_\tau[v_\tau]\|^{q-1}_{L^r(\T^d)}\big)(\tau+1)^\frac{q-1}p\\&\le \left (\tau\|f^\tau\|^{q-1}_{L^r(\T^d)}+\frac{\|v_\tau\|^{q-1}_{L^r(\T^d)}}{1+\tau
  C_r\|v_\tau\|^{q-1}_{L^r(\T^d)}}\right) (\tau+1)^\frac{q-1}p,
  \end{align*}
which implies
\begin{align}\notag
   \|v_\tau\|^{q-1}_{L^r(\T^d)}&\leq\frac\tau 2 (\tau+1)^\frac{q-1}p\|f^\tau\|^{q-1}_{L^r(\T^d)}+
     \frac{(\tau+1)^\frac{q-1}p-1}{2C_r\tau}\\\label{eq:lr-bound}&+
    \sqrt{\Bigl( \frac\tau 2  (\tau+1)^\frac{q-1}p\|f^\tau\|^{q-1}_{L^r(\T^d)}+
                    \frac{(\tau+1)^\frac{q-1}p-1}{2C_r\tau}\Bigr)^2
+\frac{ (\tau+1)^\frac{q-1}p\|f^\tau\|^{q-1}_{L^r(\T^d)}}{C_r}}
\end{align}
      Thus, recalling that $\|f_\tau\|_{L^{1+1/a}}$ is  uniformly bounded in $\tau$, we deduce that  for $r=1+1/a$, $\|v_\tau\|_{L^r(\T^d)}$ is bounded uniformly in $\tau\in(0,1)$.
Using $\eqref{eq:const-bound}$, $\max\cN_\tau\circ\Ga[v_\tau]=0$ and
$F(m)\ge 0$, we also have
\[v_\tau=\tau(U-\rho_\tau-F\circ\mu_\tau)+\cN_\tau\circ\Ga[v_\tau]\le 
\tau(\max U-\min U+F(1)).\]

\subsection{The weak solutions for local coupling}
\label{sec:weak-approach}
In this subsection, using the monotonicity properties of the MFG, 
we define weak solutions (see Definition \ref{weak}) for the ergodic MFG system
\eqref{eq:estacionario} and for the $\tau$-discrete MFG problem
\eqref{eq:disc-MFG} in the local case.  Then, we 
verify that solutions of the $\tau$-discrete MFG problem
\eqref{eq:disc-MFG} are also weak solutions. Next, relying on the
a priori estimate of the previous subsection and 
using Minty's method (see
\cite{Eva}), 
we conclude that our approximations (up to a normalization)
converge weakly to a weak solution to  \eqref{eq:estacionario}. 
Finally, using an unpublished result due to Vardan Voskanyan, 
we prove that any weak solution of \eqref{eq:estacionario} is 
a classical solution.
Therefore, normalized solutions of the $\tau$-discrete MFG system
\eqref{eq:disc-MFG} weakly converge to a classical solution of the MFG system \eqref{eq:estacionario}.

Let  $\cH^\tau: C(\T^d)\to C(\T^d)$ be
\[\cH_\tau u(x)=(\cL_\tau u(x)-u(x))/\tau.\]
Next, we prove several  properties of $\cH_\tau$, which are crucial for our study of the weak solutions to the MFG system
\eqref{eq:estacionario} in the sense of Definition \ref{weak}.
\begin{proposition}\label{pro-Hh} Fix $x\in\T^d$. Then, the map
  $u\mapsto  \cH_\tau u(x)$  is convex. Moreover,  let $\partial_u\cH_\tau(x)$ be
  the  subdifferential of $\cH_\tau$ at $u$.
  Consider the set 
 \[ \V_u(x):=\arg\max[(\eta^\tau *u)(x+\tau q)-\tau L(x,q)].\]
 Then, for $w\in C(\T^d)$ and $v\in   \V_u(x)$, 
  the functional $w\mapsto((\eta^\tau *w)(x+\tau v)-w(x))/\tau$
  belongs to $\partial_u\cH_\tau(x)$.
\end{proposition}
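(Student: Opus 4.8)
The plan is to establish the two claims separately. First, convexity of $u\mapsto\cH_\tau u(x)$ for fixed $x$; second, that the indicated linear functional lies in the subdifferential. Both rest on the variational (max) structure of $\cL_\tau$.

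For convexity, I would argue that $\cL_\tau$ is a convex operator pointwise: for fixed $x$, the quantity $\cL_\tau u(x)=\max_q\big((\eta^\tau*u)(x+\tau q)-\tau L(x,q)\big)$ is a supremum over $q$ of functionals that are \emph{affine} in $u$ (since $u\mapsto(\eta^\tau*u)(x+\tau q)$ is linear and the $-\tau L(x,q)$ term is a constant not depending on $u$). A pointwise supremum of affine functionals of $u$ is convex in $u$. Then $\cH_\tau u(x)=(\cL_\tau u(x)-u(x))/\tau$ subtracts a linear term $u(x)/\tau$ and divides by the positive constant $\tau$, neither of which destroys convexity. So $u\mapsto\cH_\tau u(x)$ is convex.

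For the subdifferential claim, fix $v\in\V_u(x)$ so that $\cL_\tau u(x)=(\eta^\tau*u)(x+\tau v)-\tau L(x,v)$, and let $\ell_v(w):=\big((\eta^\tau*w)(x+\tau v)-w(x)\big)/\tau$ be the candidate linear functional. For arbitrary $w\in C(\T^d)$, the definition of $\cL_\tau$ as a max gives $\cL_\tau w(x)\ge(\eta^\tau*w)(x+\tau v)-\tau L(x,v)$, while for $u$ the maximizer $v$ achieves equality. Subtracting and dividing by $\tau$,
\[
\cH_\tau w(x)-\cH_\tau u(x)\ge\frac{(\eta^\tau*w)(x+\tau v)-w(x)}{\tau}-\frac{(\eta^\tau*u)(x+\tau v)-u(x)}{\tau}=\ell_v(w-u),
\]
which is exactly the subgradient inequality $\cH_\tau w(x)\ge\cH_\tau u(x)+\ell_v(w-u)$. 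Hence $\ell_v\in\partial_u\cH_\tau(x)$.

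\emph{The main obstacle is conceptual bookkeeping rather than analytic difficulty:} one must be careful that $v$ is a maximizer for $u$ (so equality holds at $u$) while for the perturbed function $w$ only the one-sided inequality from the max is used (no maximality of $v$ for $w$ is claimed). The existence of a maximizer $v\in\V_u(x)$ is guaranteed since $\cL_\tau u(x)$ is attained; for small $\tau$ this follows from the concavity/singleton results recalled in Section \ref{sec:statement-problem}, but for the subdifferential argument only the nonemptiness of $\V_u(x)$ is needed, together with the coercivity $|q|^2\le c_1L(x,q)+c_2$ from \eqref{eq:super} guaranteeing the max is achieved. No delicate estimate is required beyond this.
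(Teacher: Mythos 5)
Your proof is correct and follows essentially the same route as the paper: convexity via the supremum-of-affine-functionals structure of $\cL_\tau$, and the subgradient inequality obtained by using the maximizer $v$ for $u$ to get equality at $u$ and the one-sided bound from the max at the perturbed function (the paper writes the perturbation as $u+w$ rather than comparing $u$ to $w$ directly, which is the same argument). Your closing remarks on attainment of the maximum are not needed since the statement takes $v\in\V_u(x)$ as given, but they do no harm.
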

\begin{proof}The convexity of  $\cH_\tau$ follows from the  inequality
	\begin{align*}
\tau \cH_\tau(\lam u_1+(1-\lam)u_2)(x)=&
\max_q\lam(\eta^\tau*u_1(x+\tau q)-u_1(x)-\tau L(x,q))\\+&(1-\lam)(\eta^\tau*u_2(x+\tau q)-u_2(x)-\tau L(x,q))\\
\le&\lam \tau \cH_\tau(u_1)+(1-\lam) \tau \cH_\tau(u_2),
\end{align*}
for any $u_1, u_2\in C(\T^d)$ and $0\leq \lambda\leq 1$. 
Note that for any $w\in C(\T^d)$, we have 
\begin{align}\label{DH1}
	\cL_\tau (u+w)(x)&\ge(\eta^\tau*(u+w))(x+\tau v)-L(x,v)\\&=
	\cL_\tau u(x)+(\eta^\tau *w)(x+\tau v),\nonumber
\end{align}
because $v\in   \V_u(x)$. 
Therefore,
	\begin{align} \label{DH2}
	\tau \cH_\tau (u+w)(x)&\ge \tau \cH_\tau (u)(x) +(\eta^\tau *w)(x+\tau v)-w(x).
\end{align}
Thus, the linear map $w\mapsto((\eta^\tau *w)(x+\tau v)-w(x))/\tau$
belongs to $\partial_u\cH_\tau(x)$.
\end{proof}

For $u\in C(\T^d)$ consider a Borel measurable map $V:\T^d\to\R^d$ such
that $V(x)\in \V_u(x)$ for all $x\in\T^d$.  For each
$x\in\T^d$, the linear map $\zeta_V(x):C(\T^d)\to\R $, defined by
$\zeta_V(x)w=((\eta^\tau *w)(x+\tau V(x))-w(x))/\tau$ belongs to $\partial_u\cH_\tau(x)$.
For $m\in\cP(\T^d)$, we define $\zeta_V^ *m\in C (\T^d)^ *$ by
\[\lip\zeta_V^ *m,w\rip=\int_{\T^d} \zeta_V(x)w\ dm(x),\]
and let
\[\partial_u\cH_\tau^*m =\{\zeta_V^ *m\mid V : \T^d \to\R^d\hbox{ Borel
measurable, } \forall x\in\T^d,~\ V(x)\in\V_u(x)\}.\]
Next, we
define the multivalued operator $A^\tau:D(A^\tau)\subset \R\times L^1(\T^d)\times L^1(\T^d)\to\R\times L^1(\T^d)\times L^1(\T^d)$
\begin{equation}\label{A-tau-op}
A^\tau\begin{bmatrix}\rho\\u\\m\end{bmatrix}
=\begin{bmatrix}1-\int_{\T^d} dm
	\\\partial_u\cH_\tau^*m\\ -\cH_\tau u +F(m)+\rho
\end{bmatrix},\quad (\rho,u,m)\in D(A^\tau),
\end{equation}
where $D(A^\tau)=\R\times C(\T^d)\times \{m\in L^1(\T^d):m\geq 0\} $.

Relying on  Proposition \ref{pro-Hh}, we prove the monotonicity  of $A^\tau$, which is  crucial for defining weak solutions.
\begin{proposition} Suppose that  Assumption (B4) holds and that $(\rho_1,u_1,m_1), (\rho_2,u_2,m_2) \in D(A^\tau)$. Let  $(\sigma_1,\nu_1,v_1)\in A^\tau(\rho_1,u_1,m_1)$ and $(\sigma_2,\nu_2,v_2)\in A^\tau(\rho_2,u_2,m_2)$. Then,  

\begin{equation}\label{mono-def}
\bigg\lip \begin{bmatrix}\sigma_1-\sigma_2\\\nu_1-\nu_2\\ v_1-v_2\end{bmatrix},
\begin{bmatrix}\rho_1-\rho_2\\u_1-u_2\\m_1-m_2\end{bmatrix}\bigg\rip_{((\R\times L^1(\T^d)\times L^1(\T^d))^*, D(A^\tau))}\geq 0.
\end{equation}
\end{proposition}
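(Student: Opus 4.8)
The plan is to expand the three components of the pairing in \eqref{mono-def} separately, exhibit the cancellations, and reduce everything to a pointwise-nonnegative quantity controlled by the monotonicity of $F$. Write $\nu_i=\zeta_{V_i}^*m_i$ for Borel maps $V_i$ with $V_i(x)\in\V_{u_i}(x)$, and recall $\sigma_i=1-\int_{\T^d}dm_i$ and $v_i=-\cH_\tau u_i+F(m_i)+\rho_i$. The first (scalar) component is
\[
(\sigma_1-\sigma_2)(\rho_1-\rho_2)=-(\rho_1-\rho_2)\int_{\T^d}d(m_1-m_2),
\]
while the third component is
\[
\lip v_1-v_2,m_1-m_2\rip=-\int_{\T^d}(\cH_\tau u_1-\cH_\tau u_2)\,d(m_1-m_2)
+\int_{\T^d}(F(m_1)-F(m_2))\,d(m_1-m_2)+(\rho_1-\rho_2)\int_{\T^d}d(m_1-m_2).
\]
I would first note that the two $\rho$-terms coming from these displays cancel.

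The key step is the second component. Since $\cH_\tau(\cdot)(x)$ is convex and $\zeta_{V_i}(x)\in\partial_{u_i}\cH_\tau(x)$ by Proposition \ref{pro-Hh}, evaluating the subdifferential inequalities at $u_2$ and $u_1$ respectively gives, pointwise in $x$,
\[
\zeta_{V_1}(x)(u_1-u_2)\ge\cH_\tau u_1(x)-\cH_\tau u_2(x)\ge\zeta_{V_2}(x)(u_1-u_2).
\]
I would then integrate the left inequality against the nonnegative measure $m_1$ and the right inequality against the nonnegative measure $m_2$, obtaining
\[
\lip\nu_1,u_1-u_2\rip\ge\int_{\T^d}(\cH_\tau u_1-\cH_\tau u_2)\,dm_1,
\qquad
\lip\nu_2,u_1-u_2\rip\le\int_{\T^d}(\cH_\tau u_1-\cH_\tau u_2)\,dm_2,
\]
and hence
\[
\lip\nu_1-\nu_2,u_1-u_2\rip\ge\int_{\T^d}(\cH_\tau u_1-\cH_\tau u_2)\,d(m_1-m_2).
\]
This lower bound exactly cancels the $\cH_\tau$-term appearing in the third component.

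Adding the three components and using both cancellations, the entire pairing is bounded below by $\int_{\T^d}(F(m_1)-F(m_2))\,d(m_1-m_2)$. Since $F$ is strictly increasing by (B4), the integrand $(F(m_1(x))-F(m_2(x)))(m_1(x)-m_2(x))$ is nonnegative for a.e.\ $x$, so this remaining integral is nonnegative, yielding \eqref{mono-def}. The main obstacle is precisely the second component: because each subdifferential element $\zeta_{V_i}(x)$ is tied to its own measure $m_i$, one must integrate the two halves of the pointwise sandwich against the correct measures, and it is the nonnegativity of $m_1$ and $m_2$ that legitimizes this step. I expect no role for concavity here—only the monotonicity (increasingness) of $F$ is used.
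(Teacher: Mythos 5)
Your proof is correct and follows essentially the same route as the paper: expand the three components, cancel the $\rho$-terms, regroup the $\cH_\tau$-difference and the subdifferential terms against $m_1$ and $m_2$ separately using the convexity of $u\mapsto\cH_\tau u(x)$ from Proposition \ref{pro-Hh}, and finish with the monotonicity of $F$ (indeed only increasingness, not concavity, is used there as well). The only cosmetic difference is that you phrase the key step as a pointwise sandwich integrated against the appropriate nonnegative measure, whereas the paper groups the same quantities into two manifestly nonnegative integrands.
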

\begin{proof} The proof results from the following computations. For $i=1,2$, we have that $(\sigma_i,\nu_i,v_i)\in A^\tau(\rho_i,u_i,m_i)$,
   $\nu _i=\zeta_{V_i}^ *m_i$,
$V _i : T^d \to \R^d$ Borel measurable, $V_i(x)\in\V_{u_i}(x)$. Therefore, recalling that the map
$u\mapsto  \cH_\tau u(x)$  is convex (see Proposition \ref{pro-Hh}) and taking into account that $F$ is increasing (see Assumption (B4)), we deduce 
 \begin{align*} 
	\bigg\lip
  \begin{bmatrix}\sigma_1-\sigma_2\\\nu_1-\nu_2\\v_1-v_2\end{bmatrix},
	\begin{bmatrix}\rho_1-\rho_2\\u_1-u_2\\m_1-m_2\end{bmatrix}\bigg\rip
	&=\int_{\T^d} (\cH_\tau(u_2)-\cH_\tau(u_1)~d(m_1-m_2)\\&+\int_{\T^d}(F(m_1)-F(m_2)+\rho_1-\rho_2) ~d(m_1-m_2)\\
	&+\int_{\T^d} (u_1-u_2)\ d(\nu_1-\nu_2)
	+(\rho_1-\rho_2)\int_{\T^d}d(m_2-m_1) \\
	&=\int_{\T^d} (\cH_\tau(u_2)(x)-\cH_\tau(u_1)(x)+\zeta_{V_1}(x)(u_1-u_2))~dm_1\\
	&+\int_{\T^d} (\cH_\tau(u_1)(x)-\cH_\tau(u_2)(x)+\zeta_{V_2}(x)(u_2-u_1))~dm_2
	\\&+  \int_{\T^d} (F(m_1)-F(m_2)) ~d(m_1-m_2)\geq 0.\qedhere
\end{align*}
\end{proof}
Note that by the monotonicity of the operator $A^\tau$, we mean that $A^\tau$ satisfies \eqref{mono-def}.
Setting
\[A^0\begin{bmatrix}\rho\\u\\m\end{bmatrix}
=\begin{bmatrix}
1-\int_{\T^d} dm \\\Delta m-\diver(mD_pH(x,Du))\\-\Delta u-H(x,Du)+F(m)+\rho\
\end{bmatrix},\]
and using \eqref{A-tau-op}, we can write the MFG system \eqref{eq:estacionario} and the $\tau$-discrete MFG system \eqref{eq:disc-MFG}  as 
\begin{equation}\label{MFG-OP}
A^\tau\begin{bmatrix}\rho\\u\\m\end{bmatrix}
=0,\quad\tau\geq 0.
\end{equation}
Now, using the concept  of the weak solutions  for the monotone MFGs developed in the series of papers \cite{FG2,FGT1,FeGoTa20,DRT2021Potential}, we define the weak solutions to \eqref{MFG-OP}.

\begin{definition}\label{weak}
  We say that $(\rho,u,\mu)\in \R\times L^1(\T^d)\times L^1(\T^d)$
  with $\mu\geq 0$
  is a weak solution to \eqref{MFG-OP} if there is  $k>1$ such that 
  $u\in  L^k(\T^d)$ and
  for any $m,\fui\in C^2(\T^d)$, $\lam\in\R$ with $m\geq 0$,  and any
  $(\sigma,\nu,v)
        \in A^\tau(\lam,\fui, m)$, we have
 
\[\bigg 	\lip	\begin{bmatrix}\sigma\\\nu\\v\end{bmatrix},\begin{bmatrix}\lam-\rho\\\fui-u\\m-\mu\end{bmatrix}\bigg\rip_{((
	\R\times L^k(\T^d)\times L^1(\T^d))^*, \,\R\times L^k(\T^d)\times L^1(\T^d))}\ge 0.\]
\end{definition}
The following proposition verifies that solutions of $\tau$-discrete MFG system \eqref{eq:disc-MFG} 
are weak solutions to \eqref{MFG-OP} for $\tau>0$.
\begin{proposition}\label{discrete-weak} Suppose that $L$ satisfies
  $(A1)$ and $F$ satisfies $(B4)$ and $(B5)$. Then, a solution 
  $(\rho_\tau,u_\tau,\tilde\mu_\tau)\in \R\times C(\T^d) \times\cC_\tau$ to \eqref{eq:disc-MFG}  with $\tau>0$,
  is a weak solution to \eqref{MFG-OP}.
\end{proposition}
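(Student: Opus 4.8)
The triple to test is $(\rho,u,\mu)=(\rho_\tau,u_\tau,\mu_\tau)$ with $\mu_\tau=\Pr_{1\#}\tilde\mu_\tau$; since $u_\tau\in C(\T^d)\subset L^k(\T^d)$ for every $k>1$ and $\mu_\tau$ is a nonnegative density in $L^1(\T^d)$ (bounded by \eqref{ubm}), this triple is admissible for Definition \ref{weak}. The naive route is to show $0\in A^\tau(\rho_\tau,u_\tau,\mu_\tau)$ and then invoke the monotonicity of $A^\tau$: the first component vanishes because $\int d\mu_\tau=1$, the third because the first equation of \eqref{eq:disc-MFG} reads $\cH_\tau u_\tau=\rho_\tau+F(\mu_\tau)$, and the second would require a single Borel selection $V(x)\in\V_{u_\tau}(x)$ with $\zeta_V^*\mu_\tau=0$. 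Producing such a $V$ is exactly where a difficulty hides, since $\V_{u_\tau}(x)$ need not be single-valued for every $\tau$; I would therefore verify the defining inequality of Definition \ref{weak} directly, which turns out to sidestep the selection altogether.

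So I would fix $m,\fui\in C^2(\T^d)$, $\lam\in\R$, $m\ge0$, and $(\sigma,\nu,v)\in A^\tau(\lam,\fui,m)$, where $\nu=\zeta_W^*m$ for some Borel $W(x)\in\V_\fui(x)$, $\sigma=1-\int dm$, and $v=-\cH_\tau\fui+F(m)+\lam$, and expand
\[I:=\sigma(\lam-\rho_\tau)+\lip\nu,\fui-u_\tau\rip+\int v\,d(m-\mu_\tau).\]
Using $\int d\mu_\tau=1$, all $\lam$-terms cancel, and the remainder regroups into an integral against $dm$ with integrand $\rho_\tau+\zeta_W(x)(\fui-u_\tau)-\cH_\tau\fui+F(m)$, an integral $\int(\cH_\tau\fui-F(m))\,d\mu_\tau$, and the constant $-\rho_\tau$.

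The two lower bounds are the heart of the argument. On the $dm$-part I would use the subdifferential inequality \eqref{DH2} of Proposition \ref{pro-Hh} with base point $\fui$, increment $u_\tau-\fui$, and the selection $W$, namely $\zeta_W(x)(\fui-u_\tau)\ge\cH_\tau\fui(x)-\cH_\tau u_\tau(x)$; together with $\cH_\tau u_\tau=\rho_\tau+F(\mu_\tau)$ this makes the $dm$-integrand pointwise at least $F(m)-F(\mu_\tau)$. For the $d\mu_\tau$-part I would bound $\int\cH_\tau\fui\,d\mu_\tau$ from below: since $\cH_\tau\fui(x)\ge\frac{(\eta^\tau*\fui)(x+\tau q)-\fui(x)}{\tau}-L(x,q)$ for every $q$, integrating this pointwise bound against $\tilde\mu_\tau$ and using the $\tau$-holonomicity identity $\int[(\eta^\tau*\fui)(x+\tau q)-\fui(x)]\,d\tilde\mu_\tau=0$ (valid because $\tilde\mu_\tau\in\cC_\tau$) yields $\int\cH_\tau\fui\,d\mu_\tau\ge-\int L\,d\tilde\mu_\tau$, and the second equation of \eqref{eq:disc-MFG} identifies $-\int L\,d\tilde\mu_\tau=\rho_\tau+\int F(\mu_\tau)\,d\mu_\tau$.

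Substituting both bounds, the $\rho_\tau$-terms cancel and $I$ collapses to $\int_{\T^d}(F(m)-F(\mu_\tau))(m-\mu_\tau)\,dx$, which is nonnegative since $F$ is increasing by $(B4)$; this closes the argument. The main obstacle is the term $\int\cH_\tau\fui\,d\mu_\tau$: because $\fui$ is an arbitrary test function and not the solution, it cannot be controlled pointwise, and the only way I see to tame it is to lift to the holonomic measure $\tilde\mu_\tau$ and exploit its defining identity. The rest is bookkeeping—checking that each pairing is well defined ($\nu$ and $v$ are bounded, hence lie in $L^{k'}$ and $L^\infty$) and that every integral involving $L$ is finite, both of which follow from $\tilde\mu_\tau\in\cC_\tau$ and the second equation of \eqref{eq:disc-MFG}.
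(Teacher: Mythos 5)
Your proof is correct and follows essentially the same route as the paper: both verify Definition \ref{weak} directly, control the $dm$-part of the pairing with the subdifferential inequality of Proposition \ref{pro-Hh} at the test function $\fui$, and reduce everything to $\int_{\T^d}(F(m)-F(\mu_\tau))\,d(m-\mu_\tau)\ge 0$. The only (harmless) divergence is in bounding $\int\cH_\tau\fui\,d\mu_\tau$ from below: the paper uses Remark \ref{support} together with the subdifferential inequality at $u_\tau$ and the first equation of \eqref{eq:disc-MFG}, while you use the pointwise supremum bound in the definition of $\cL_\tau$, the $\tau$-holonomicity identity, and the second equation --- both yield the same lower bound $\rho_\tau+\int F(\mu_\tau)\,d\mu_\tau$.
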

\begin{proof} For simplicity, we  omit the subscript $\tau$ for $(\rho_\tau,u_\tau,\tilde\mu_\tau)$.
		Let $\fui, m\in C^2(\T^d)$, $\lam\in\R$. 
	Since $\tilde\mu$ is supported in 
	$\{(x,q):q\in\V_u(x)\}$, we have from \eqref{DH1} and \eqref{DH2} that 
	\begin{equation*}
	\begin{split}
	\int_{\T^d} \cH_\tau (\fui)\ d\mu&\ge\int_{\T^d} \cH_\tau (u)\ d\mu
\\&+\int_{\T^d} \frac{(\eta^\tau *(\fui-u))(x+\tau q)-(\fui-u)(x)}\tau\ d\tilde\mu=
\int_{\T^d} \cH_\tau (u)\ d\mu.
	\end{split}
	\end{equation*}
  For $(\sigma,\nu,v)
  \in A^\tau(\lam,\fui,m)$, $\nu=\zeta_V^ *m $,   $V : \T^d \to \R^d$  Borel measurable,
 $V(x)\in\V_\fui(x)$, we have
	\begin{align*}
	\bigg\lip \begin{bmatrix}\sigma\\\nu\\v\end{bmatrix},
	\begin{bmatrix}\lam-\rho\\\fui-u\\m-\mu\end{bmatrix}\bigg\rip
	&=\int_{\T^d} (-\cH_\tau\fui+F(m)+\lam)\ d(m-\mu)
	\\&+\int_{\T^d}\zeta_V (\fui-u)\ dm+(\lam-\rho)\int_{\T^d} d(\mu-m)\\
	&=\int_{\T^d} (\zeta_V(\fui-u)-\cH_\tau\fui)\ dm+
	\int_{\T^d} \cH_\tau\fui\ d\mu\\&+\int_{\T^d} F(m)\ d(m-\mu) -\rho\int_{\T^d} d(\mu-m)\\
&	\ge \int_{\T^d} (-\cH_\tau u+F(m)+\rho)\ d(m-\mu)\\&
	=\int_{\T^d} (F(m)-F(\mu))\ d(m-\mu)\ge 0.\qedhere
	\end{align*}
\end{proof}
We aim to prove the weak convergence of solutions to
\eqref{eq:disc-MFG}. For that purpose, we need the following proposition. 
\begin{proposition}\label{op-conv}
Suppose that $L$ satisfies $(A1)$ and let $\fui, m\in C^2(\T^d)$, $\rho\in\R$. Then,
	\[\lim_{\tau\to 0}A^\tau\begin{bmatrix}\rho\\\fui\\m\end{bmatrix}
	=A^0\begin{bmatrix}\rho\\\fui\\m\end{bmatrix}.\]
\end{proposition}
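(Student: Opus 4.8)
The plan is to treat the three components of $A^\tau[\rho,\fui,m]$ separately, since each converges for an essentially independent reason. The first component $1-\int_{\T^d}dm$ does not depend on $\tau$ at all, so it already equals the first component of $A^0[\rho,\fui,m]$. The third component $-\cH_\tau\fui+F(m)+\rho$ is handled directly by Corollary \ref{HJ-disc-cont}: since $\fui\in C^2(\T^d)$ and $\cH_\tau\fui=(\cL_\tau\fui-\fui)/\tau$, that corollary gives $\cH_\tau\fui\to\Delta\fui+H(x,D\fui)$ in $C^0(\T^d)$, so the third component converges in $C^0$, hence in $L^1$, to $-\Delta\fui-H(x,D\fui)+F(m)+\rho$. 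Neither of these requires new work.

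The substance is the middle component $\partial_\fui\cH_\tau^*m$, which is a priori set-valued. The first step is to note that for a $C^2$ profile it becomes single-valued once $\tau$ is small: by Proposition \ref{implicit} there is $t_\fui>0$ such that for $\tau<t_\fui$ the maximizer set $\V_\fui(x)$ is the singleton $\{V_\fui(\tau,x)\}$, with $V_\fui(\tau,\cdot)\to V_\fui(0,\cdot)=D_pH(\cdot,D\fui)$ uniformly as $\tau\to0$. Hence for such $\tau$ we have $\partial_\fui\cH_\tau^*m=\{\zeta_{V_\fui(\tau,\cdot)}^*m\}$, and it remains only to show this single element converges to $\Delta m-\diver(mD_pH(x,D\fui))$.

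To identify the limit I would test against an arbitrary $w\in C^2(\T^d)$, writing $\lip\zeta_{V_\fui(\tau,\cdot)}^*m,w\rip=\int_{\T^d}\frac{(\eta^\tau*w)(x+\tau V_\fui(\tau,x))-w(x)}{\tau}\,dm(x)$. Since $V_\fui(\tau,\cdot)$ stays uniformly bounded, say by $R$, for small $\tau$, Lemma \ref{discrete-continuous} bounds $\big|\frac{(\eta^\tau*w)(x+\tau q)-w(x)}{\tau}-\lip Dw(x),q\rip-\Delta w(x)\big|$ by $\om(\tau)$ uniformly over $(x,q)\in\T^d\times B_R$. Evaluating at $q=V_\fui(\tau,x)$ and using the uniform convergence $V_\fui(\tau,x)\to D_pH(x,D\fui(x))$, the integrand converges uniformly in $x$ to $\Delta w(x)+\lip Dw(x),D_pH(x,D\fui(x))\rip$. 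Integrating against the fixed measure $m\,dx$ and integrating by parts then gives $\int_{\T^d}(\Delta w+\lip Dw,D_pH(x,D\fui)\rip)\,dm=\lip\Delta m-\diver(mD_pH(x,D\fui)),w\rip$, which is exactly the pairing of $w$ with the middle component of $A^0[\rho,\fui,m]$.

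The step I expect to be the main obstacle is precisely this middle component, for two reasons. First, the set-valuedness must be disposed of through Proposition \ref{implicit} before any limit is even meaningful. Second, one must be careful about the topology in which convergence is asserted: the natural and readily available mode is weak convergence tested against $C^2$ functions, supplied by the consistency estimate of Lemma \ref{discrete-continuous} together with the uniform convergence of the optimal velocities. Upgrading to genuine $L^1$ convergence of the densities, should the application demand it, would require controlling $\frac1\tau\big(\int_{\T^d}\eta^\tau(x+\tau V_\fui(\tau,x)-z)m(x)\,dx-m(z)\big)$ uniformly in $z$; this follows from the smoothness of $m$, $\eta^\tau$, and $V_\fui$, but it is the one place where the delicate $1/\tau$ cancellation has to be tracked explicitly.
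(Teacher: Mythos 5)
Your treatment of the first and third components matches the paper exactly, and your identification of the limit of the middle component is correct. The problem is that your main argument for the middle component only establishes convergence of $\partial_\fui\cH_\tau^*m$ in the sense of distributions (tested against $w\in C^2(\T^d)$), and you explicitly defer the strong convergence of the density to a final remark ("should the application demand it... the delicate $1/\tau$ cancellation has to be tracked explicitly"). That deferred step is not an optional upgrade: it is the actual content of the proposition. The operator $A^\tau$ takes values in $\R\times L^1(\T^d)\times L^1(\T^d)$, and in the application (Minty's method in the proof of Theorem \ref{local}) the second component of $A^\tau(\lam,\fui,m)$ is paired with $\fui-u_\tau$, where $u_\tau$ converges only \emph{weakly} in $L^{1+1/a}$. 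Passing to the limit in that pairing requires strong convergence of $\partial_\fui\cH_\tau^*m$ in the dual Lebesgue space; weak-$*$ convergence against $C^2$ test functions is not enough. So the proof as written has a genuine gap precisely at the step you flagged as the obstacle.

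For comparison, the paper closes this gap by an explicit computation: it sets $\Phi_\tau=(I+\tau V_\fui(\tau,\cdot))^{-1}$ (well defined for small $\tau$ by the implicit function theorem), changes variables to write the density of $\zeta_{V_\fui}^*m$ as $\bigl(\Psi(\tau,z)-\Psi(0,z)\bigr)/\tau$ with $\Psi(\tau,z)=\eta^\tau*(m\circ\Phi_\tau\,\det D\Phi_\tau)(z)$ and $\Psi(0,\cdot)=m$, then uses $\bigl(\Psi(\tau,z)-\Psi(0,z)\bigr)/\tau=\int_0^1 D_1\Psi(s\tau,z)\,ds$ and computes $\partial_\tau\Psi(0,z)=\Delta m-\diver(mD_pH(x,D\fui))$. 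This is exactly the "$1/\tau$ cancellation" you mention, carried out via a smooth change of variables rather than asserted; it yields convergence of the density itself, uniformly in $z$. Your argument would be complete if you replaced the final remark with this (or an equivalent) computation; as it stands, the heart of the proof is missing.
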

\begin{proof}
	According to Corollary \ref{HJ-disc-cont}, we have
	\[\lim_{\tau\to 0} -\cH_\tau\fui+F(m)+\rho=-\Delta\fui-H(x,D\fui)+F(m)+\rho,
	\]
	where the convergence is uniform in $x$.
	Let $t_\fui$ and $V_\fui$ be given by Proposition
        \ref{implicit} and $R=\max|V_\fui|$. 
        
        Next, we  compute explicitly $\partial_\fui \cH_\tau^*$.
        For $(\tau,x)\in(0,t_\fui)\times\T^d$,
	we  have
	\begin{equation*}
		\partial_\fui \cH_\tau^*m=\zeta^*_{V_\fui}m,
	\end{equation*}
and 
	\[\lip \partial_\fui \cH_\tau^*m, w\rip= \int_{\T^d} \zeta_{V_\fui}(x)w\ dm(x)
	=\int_{\T^d}\frac{\eta^\tau*w(x+\tau V_\fui(\tau,x))-w(x)}\tau m(x)\ dx.\]
	Note that
	\begin{align*}
	\cI:=\int_{\T^d}\eta^\tau*w(x+\tau V_\fui(\tau,x))m(x)\ dx&=\int_{\T^d}\int_{\T^d}\eta^\tau(y)w(x+\tau V_\fui(\tau,x)-y)\ dy\ m(x)\ dx\\
&=\int_{\T^d}\eta^\tau(y) \int_{\T^d} w(x+\tau V_\fui(\tau,x)-y) m(x)\ dx\ dy.
	\end{align*}
	Let $\Phi_\tau=\Phi(\tau,\cdot)$ be the inverse map of $I+\tau V_\fui(\tau, )$. 
	The implicit function theorem and the compactness of $\T^d$ imply that there is
	$\tau_\fui$ such that  $\Phi_\tau$ is well defined for $\tau<\tau_\fui$  and
	$\Phi$ is smooth in $\tau$. Therefore, we can change variables in $\cI$ such that $z=x+\tau V_\fui(\tau,x)-y$ and $x=\Phi_\tau(y+z)$. Hence,
	\begin{align*}
		\cI&=\int_{\T^d}\eta^\tau(y) \int_{\T^d} w(z) m(\Phi_\tau(y+z))\det D\Phi_\tau(y+z)\ dz\ dy\\&=
		\int_{\T^d} w(z)\int_{\T^d}\eta^\tau(y) m(\Phi_\tau(y+z))\det D\Phi_\tau(y+z)\ dy\ dz\\
		&=\int_{\T^d} w(z)\eta^\tau*(m\circ \Phi_\tau\det D\Phi_\tau)(z) dz.
	\end{align*}
	Setting $\Psi(\tau,z)=\eta^\tau*(m\circ \Phi_\tau\det D\Phi_\tau)(z)$, we notice that  $\Psi(0,z)=m(z)$ and
	\begin{align*}
		\partial_\fui \cH_\tau^*m(z)&=\frac{\Psi(\tau,z)-\Psi(0,z)}\tau=
		\int_0^1D_1\Psi(s\tau,z)\ ds.
	\end{align*}
Consequently,
	\begin{align*}
		\lim_{\tau \to 0}\partial_\fui \cH_\tau^*m(z)&=\frac {\partial\Psi}{\partial\tau}(0,z).
	\end{align*}
	On the other hand, taking $W_\tau$ such that $\Phi_\tau=I-\tau W_\tau$, we observe that  
	\begin{equation*}
x+\tau (V_\fui(\tau, x)-W_\tau( x+ \tau V_\fui(\tau, x)))=x.
	\end{equation*}
Therefore,
	\[W_\tau( x+\tau V_\fui(\tau, x))=V_\fui(\tau, x)=D_pH(x,D(\eta^\tau*\fui)(x+\tau
	V_\fui(\tau, x))),\]
which implies
	\begin{align*}  \frac {\partial\Psi}{\partial\tau}&=
		\Delta\Psi+
		\eta^\tau*\Bigg(Dm\circ\Phi_\tau\cdot \frac{d\Phi_\tau}{d\tau}\det  D\Phi_\tau+
		m\circ \Phi_\tau\frac {d\det D\Phi_\tau}{d\tau} 
		\Bigg ),	\end{align*}  
	and
	\begin{align*} 
		\frac {\partial\Psi}{\partial\tau}(0,z)
		&=\Delta m-Dm\cdot W_0-m\diver W_0=\Delta m-\diver(m D_pH(x,D\fui)).\qedhere
	\end{align*}  
\end{proof}

The next result of this subsection  is the following  weak-strong uniqueness result.  If there exists a
classical solution to \eqref{MFG-OP} with $\tau=0$,  then any weak
solution agrees with this solution. This result was first proven by Vardan Voskanyan
but was never published. We present here the proof for completeness. 
\begin{lemma}\label{unique1} Suppose that $L$ satisfies $(A1)$ and $F$
  is strictly increasing.
	Let $k>1$ and $(r,u,m)\in\R \times L^k(\T^d)\times L^1(\T^d)$  be a weak solution (in the sense of Definition \ref{weak}) of \eqref{MFG-OP} with $\tau=0$, and let $(\rho,\fui,\mu)\in\R \times C^2(\T^d)$
	be a classical solution. Then, $(r,m)=(\rho, \mu)$ and $u-\fui$ is constant.
\end{lemma}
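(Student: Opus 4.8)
The plan is to exploit the monotonicity of the operator $A^0$ together with the strict monotonicity of $F$, the whole difficulty being that the weak solution's value function $u$ lies only in $L^k$, so one may never place a derivative on $u$. First I would record, by integration by parts (legitimate since both arguments are smooth), the monotonicity identity for $A^0$: for smooth triples $w_i=(\rho_i,u_i,m_i)$ with $m_i\ge 0$,
\[\big\langle A^0 w_1-A^0 w_2,\ w_1-w_2\big\rangle=\int_{\T^d} m_1\,B(Du_1,Du_2)+\int_{\T^d} m_2\,B(Du_2,Du_1)+\int_{\T^d}(F(m_1)-F(m_2))(m_1-m_2),\]
where $B(p,p')=H(x,p')-H(x,p)-\langle D_pH(x,p),p'-p\rangle\ge 0$ by convexity of $H$ coming from $(A1)$. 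All three terms are nonnegative, and the last is strictly positive unless $m_1=m_2$.

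The key device is the choice of test functions. For smooth $\phi$, let $m^\phi>0$ be the unique invariant density of the drift $D_pH(x,D\phi)$, i.e. the smooth positive solution of $\Delta m^\phi-\diver(m^\phi D_pH(x,D\phi))=0$ with $\int_{\T^d}m^\phi=1$. Testing the inequality of Definition \ref{weak} with the admissible triple $(\lambda,\phi,m^\phi)$ annihilates the Fokker--Planck component of $A^0$ --- precisely the component paired against $u$ --- leaving
\[\int_{\T^d}\big(-\Delta\phi-H(x,D\phi)+F(m^\phi)+\lambda\big)\,(m^\phi-m)\ge 0.\]
Taking $\phi=\varphi$ (so $m^\varphi=\mu$) with $\lambda$ free, the bracket equals $\lambda-\rho$ by the classical equation, giving $(\lambda-\rho)\int(\mu-m)\ge0$ for all $\lambda\in\R$, hence $\int_{\T^d}m=1$. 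Writing $R(\phi)=-\Delta\phi-H(x,D\phi)+F(m^\phi)+\rho$, the inequality then reads $\int_{\T^d}R(\phi)(m^\phi-m)\ge0$ for every smooth $\phi$, with $R(\varphi)=0$.

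The crux, and where I expect the real work, is upgrading this to $m=\mu$. Since $R(\varphi)=0$ and $m^\varphi=\mu$, the functional $\phi\mapsto\int R(\phi)(m^\phi-m)$ is nonnegative and vanishes at $\varphi$, so its first variation there vanishes: with $g:=\mu-m$ (note $\int g=0$) one obtains, for all smooth $\xi$,
\[\int_{\T^d}\big(-\Delta\xi-\langle D_pH(x,D\varphi),D\xi\rangle\big)\,g+\int_{\T^d}F'(\mu)\,\dot m[\xi]\,g=0,\]
where, with the generator $\mathcal L=\Delta+\langle D_pH(x,D\varphi),D\,\cdot\,\rangle$, the linearized density $\dot m[\xi]$ solves $\mathcal L^*\dot m=\diver(\mu\,D^2_{pp}H(x,D\varphi)D\xi)$. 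This says $g$ solves, weakly, the adjoint linearized system; introducing $w$ with $\mathcal L w=F'(\mu)g$ (the additive constant fixed by the Fredholm condition drops out since $\int g=0$) and pairing the two equations yields the energy identity
\[\int_{\T^d}F'(\mu)\,g^2=-\int_{\T^d}\mu\,\langle D^2_{pp}H(x,D\varphi)Dw,Dw\rangle\le 0.\]
As $F'>0$ (because $F$ is strictly increasing and $\mu>0$), the left side is nonnegative, so both sides vanish and $g=0$, i.e. $m=\mu$.

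Once $m=\mu$, the remaining conclusions follow and, crucially, need no regularity of $u$. Testing with the Fokker--Planck--consistent triples $(\lambda,\varphi,(1+\delta)\mu)$ --- whose middle component of $A^0$ again vanishes --- and letting $\delta\to0^\pm$ forces $r=\rho$. Finally, testing with $(\lambda,\phi,\mu)$ for smooth $\phi$ gives $\int_{\T^d}\big(\Delta\mu-\diver(\mu D_pH(x,D\phi))\big)(\phi-u)\ge0$, an inequality minimized at $\phi=\varphi$; its first variation yields $\int_{\T^d}\diver\big(\mu\,D^2_{pp}H(x,D\varphi)D\xi\big)(\varphi-u)=0$ for all $\xi$, and since $\xi\mapsto-\diver(\mu D^2_{pp}H(x,D\varphi)D\xi)$ is uniformly elliptic and surjects onto mean--zero functions, $\varphi-u$ is orthogonal to every mean--zero function, i.e. $u-\varphi$ is constant. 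The main obstacle is the middle step: the low regularity of $u$ precludes any direct comparison, and it is only the test densities $m^\phi$ --- which remove the $u$--coupling --- together with the strict sign of $F'$ that close the argument.
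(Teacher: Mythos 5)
Your argument is correct in outline but takes a genuinely different route from the paper's. The paper applies Minty's device to the full pairing at once: it sets $h(\ep)=\lip A^0(\rho+\ep\lam,\fui+\ep\psi,\mu+\ep\nu),(\rho+\ep\lam-r,\fui+\ep\psi-u,\mu+\ep\nu-m)\rip$, uses $h(0)=0\le h(\ep)$ to get $h'(0)=0$ for arbitrary smooth $(\lam,\psi,\nu)$, and then ``by a density argument'' substitutes $\psi=\fui-u$, $\nu=\mu-m$ to land directly on the single energy identity $\int F'(\mu)(\mu-m)^2+\int\mu\lip D^2_{pp}H\,D(\fui-u),D(\fui-u)\rip=0$. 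You instead restrict to the special test triples $(\lam,\phi,m^\phi)$ built from invariant densities, which annihilates the Fokker--Planck component paired against $u$; you then obtain $m=\mu$ through a separate adjoint argument with the auxiliary $w$, and recover $r=\rho$ and $u-\fui=\mathrm{const}$ by two further independent variations. What your route buys is that no derivative is ever placed on $u$ until the last step, and there only through $\int\diver(\mu D^2_{pp}HD\xi)(\fui-u)$ with all derivatives on smooth data plus a surjectivity argument --- this is more honest than the paper's unexplained density argument, which formally requires $\fui-u\in H^1$. Two caveats. First, the Fredholm condition for $\cL w=F'(\mu)g-c$ fixes $c=\int F'(\mu)g\,\mu\,dx$ (orthogonality to the kernel of $\cL^*$, spanned by $\mu$), not a condition on $\int g$; the constant then genuinely drops out because $\int \dot m[\xi]=0$ and $\int g=0$, so your conclusion stands but the parenthetical reason is misstated. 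Second, since $m$ is only in $L^1$, the quantities $\int F'(\mu)g^2$ and the admissibility of $\xi=w$ (whose equation has only an $L^1$ right-hand side) are not a priori justified and require an approximation/Fatou step; this gap is shared by the paper's own proof, which hides the same issue in its density argument.
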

\begin{proof}
	For any $\nu,\psi\in\C^2(\T^d)$  and $\lam\in\R$, the function  
	\[h(\ep)=
	\bigg\lip A^0\begin{bmatrix}\rho+\ep\lam\\\fui+\ep\psi\\\mu+\ep\nu\end{bmatrix},
	\begin{bmatrix}\rho+\ep\lam-r\\\fui+\ep\psi-u\\\mu+\ep\nu-m\end{bmatrix}\bigg\rip\]
	has a minimum at $\ep= 0$, so $h'(0)=0$. Thus, taking into account that
	$(\rho,\fui,\mu)$
	is a classical solution of the equation involving $A^0$, we obtain 
	\begin{equation}\label{last-eq}
\begin{split}
		\bigg\lip DA^0\begin{bmatrix}\rho\\\fui\\\mu\end{bmatrix}
\begin{pmatrix}\lam\\\psi\\\nu\end{pmatrix},
\begin{bmatrix}\rho-r\\\fui-u\\\mu-m\end{bmatrix}\bigg\rip&
=\int_{\T^d}(-\Delta\psi-D_pH(x,D\fui)D\psi+F'(\mu)\nu+\lam)d(\mu-m)\\&
+\int_{\T^d}(\Delta\nu-\diver(\nu D_pH(x,D\fui)d(\mu-m)
-(\rho-r)\int_{\T^d} d\nu\\
&+\int_{\T^d}\mu D^2_{pp}H(x,D\fui)D\psi)(\fui-u)~dx=0.
\end{split}
	\end{equation}
	Letting $\lam=\rho-r$, and using a density argument, we take
	$\psi=\fui-u$, $\nu=\mu-m$  such that
	\[
	\int_{\T^d} F'(\mu)(\mu-m)^2+\mu \lip D^2_{pp}H(x,D\fui)D(\fui-u),D(\fui-u)\rip=0.
	\]
Hence, 	relying on the facts that $\mu, F'(\mu)>0$ and $D^2_{pp}H$ is positive definite, we get
	$m=\mu$ and $Du=D\fui$. Using these in \eqref{last-eq}, we obtain $r=\rho$.
\end{proof}
 Now, using the previous results, we prove Theorem \ref{local}.
\begin{proof}[Proof of Theorem \ref{local}]
By Proposition \ref{discrete-weak},  $(\rho_\tau,u_\tau,\tilde\mu_\tau)\in\R\times C(\T^d) \times\cC_\tau$ is a weak
solution to \eqref{MFG-OP} with $\tau>0$.
From \eqref{eq:lr-bound}, we have that $\|u_\tau\|_{L^{1+1/a}(\T^d)}$ is
uniformly bounded in $\tau\in(0,1)$.  Let $\tau_n>0$ be a sequence
converging to zero. By Lemma \ref{op-conv}, there is a subsequence, still denoted $\tau_n$, such that
$u_{\tau_n}$ converges weakly in $L^{1+1/a}(\T^d)$ to $u$. By Remark \ref{rel-comp}, there is a further
subsequence, still denoted $\tau_n$, such that $(\rho_{\tau_n},\mu_{\tau_n})$ 
converges in $\R\times\cP_\ell$ to $(r, m)$. Thus, $(r,u,m)$
is a weak solution to
\eqref{MFG-OP} with $\tau=0$. Therefore, by Lemma \ref{unique1}, we conclude the proof.
\end{proof}

\bibliographystyle{plain}

\bibliography{mfg.bib}

%
%
%

\end{document}